\numberwithin{equation}{section}
\theoremstyle{plain}
\newtheorem{theorem}{Theorem}[section]
\newtheorem{lem}[theorem]{Lemma}
\newtheorem{prop}[theorem]{Proposition}
\newtheorem{cor}[theorem]{Corollary}
\newtheorem{rem}[theorem]{Remark}
\newtheorem{mydef}[theorem]{Definition}
\begin{document}
	
	%%%%%%%%% TITLE PAGE %%%%%%%%%%%%%%%%%%%%%%%%%%

	\title[BBM among obstacles]{Maximum of Branching Brownian Motion among mild obstacles}

	\author[L. Hartung]{Lisa Hartung}
	\address{L. Hartung\\
		Institut f\"ur Mathematik \\Johannes Gutenberg-Universit\"at Mainz\\
		Staudingerweg 9,
		55099 Mainz, Germany}
	\email{lhartung@uni-mainz.de}
	\author[M. Lehnen]{Mich\`ele Lehnen}
	\address{M. Lehnen\\
		Institut f\"ur Mathematik \\Johannes Gutenberg-Universit\"at Mainz\\
		Staudingerweg 9,
		55099 Mainz, Germany}
	\email{lhartung@uni-mainz.de}
	
	\date{\today}

	\begin{abstract} 
	We study the height of the maximal particle at time $t$ of a one dimensional branching Brownian motion with a space-dependent branching rate. The branching rate is set to zero in finitely many intervals ({\it obstacles}) of order $t$. We obtain almost sure asymptotics of the first order of the maximum, describe the path of a particle reaching this height and describe its dependence on the size and location of the obstacles. 
		
	\end{abstract}
	
	\thanks{
		This work was partly funded by the Deutsche Forschungsgemeinschaft (DFG, German Research Foundation) under Germany's Excellence Strategy - GZ 2047/1, Projekt-ID 390685813 and GZ 2151 - Project-ID 390873048, through Project-ID 233630050 -TRR 146, through Project-ID 443891315 within SPP 2265, and Project-ID 446173099.
	}

	\subjclass[2000]{60J80, 60G70, 82B44} \keywords{branching Brownian motion, excluded volume, 
		extreme values, F-KPP equation} 
	
	\maketitle

\section{Introduction}\label{section introduction}
Standard branching Brownian motion is a prototype for a spatial branching process, which has been studied extensively in the last decades also due to its connection with the F-KPP equation \cite{fisher37, kpp, Ikeda1, Ikeda2, Ikeda3}. It was shown by Bramson \cite{B_M,B_C} that the position of the maximal particle at time $t$ is tight around 
\begin{equation}\label{max.hom}
m(t)=\sqrt{2} t -\frac{3}{2\sqrt{2}} \log(t)
\end{equation}
and  the convergence of the extremal process was proven in \cite{ABK_E,ABBS}. There are several ways to introduce inhomogeneities  into branching Brownian motion. Branching Brownian motion with  time inhomogeneous variance has been studied extensively in \cite{FZ_RW, BovierHartung2014, BovierHartung2015, FZ_BM, MZ, BH18}. Certain instances of branching Brownian motion with space inhomogeneous branching rate have been analysed in \cite{BBHHR15, RS21}, where the branching rate is a function of the distance to the origin. Moreover, branching Brownian motion with (mild) obstacles has been studied in \cite{Engl2015,OME17,EH03} focusing mainly on the total population size. In the present article, we consider a one dimensional branching Brownian motion for time $t$, in which the branching is suppressed in space intervals of order $t$. 

\subsection{The model}	
In this article, we study a one dimensional branching Brownian motion (BBM) with space inhomogeneous branching rate. More precisely, we consider a BBM for a time horizon $t$, that does not branch in some space intervals of order $t$ and otherwise branches at rate 1 into two.

	\begin{mydef} \label{Def.model}
		Let $\ell\in\mathbb{N}$. For $i=1,\!...,\ell$, let $a_{i}>0$ and $b_{i}>0$ be some constants. We call $(a_{i},b_{i})_{i=1}^{\ell}$ an obstacle landscape (see  Figure \ref{fig:picture-obstacles}). BBM among obstacles is denoted by $\{X_{k}(t),k=1,\!...,n(t)\}$ and defined as a one dimensional dyadic BBM with space dependent branching rate $\mathbbm{1}_{K^{\prime}}$ where $K^{\prime}$ is the complement of
		\begin{equation}
			K=\bigcup_{m=1}^{\ell}\left(\sum_{i=1}^{m-1}(a_{i}+b_{i})t+a_{m}t,\sum_{i=1}^{m}(a_{i}+b_{i})t\right).
		\end{equation}
	\end{mydef}

	\begin{figure}[H]
		\centering
		\includegraphics[width=1\linewidth]{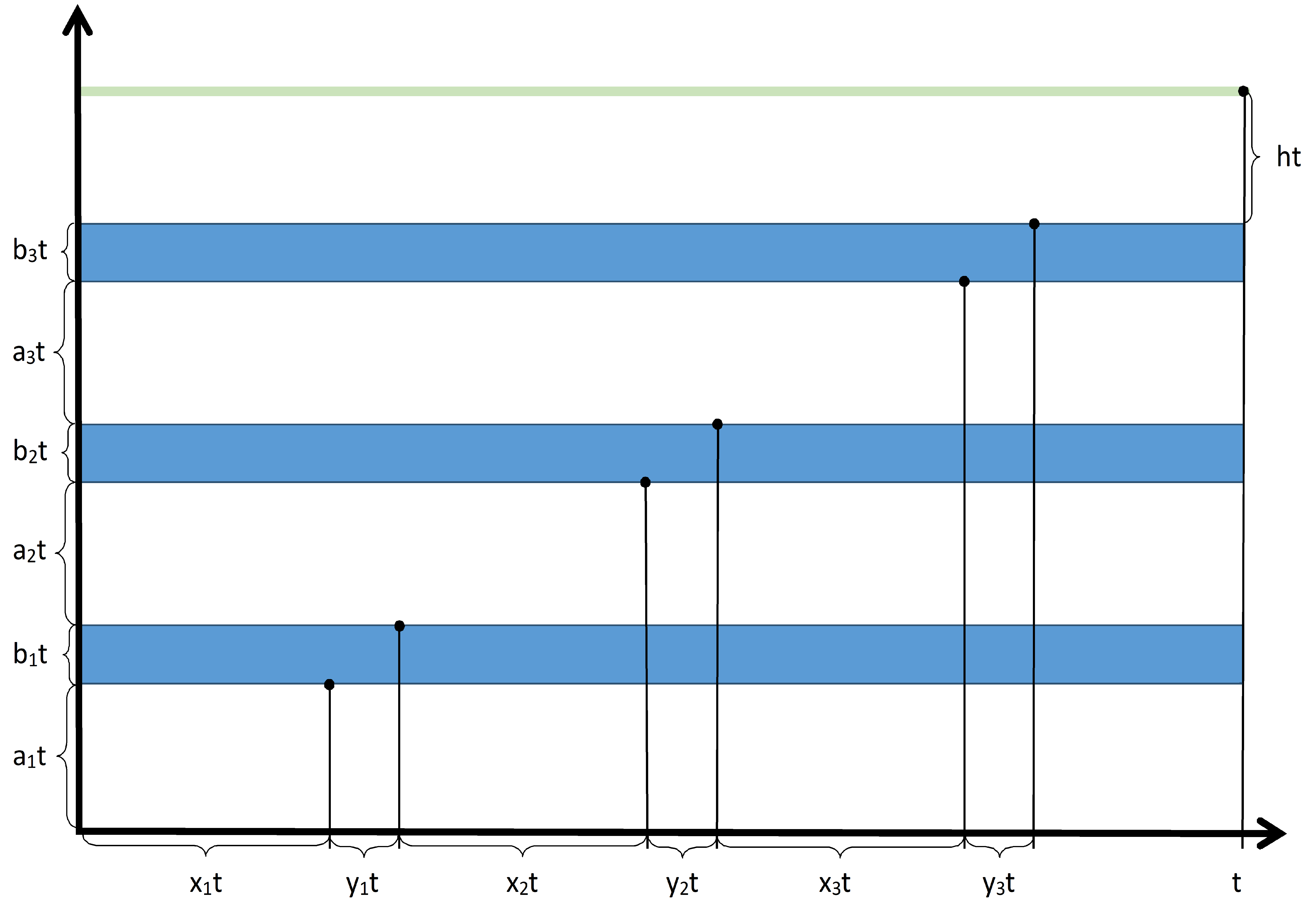}
		\caption{This is an example of $\ell=3$ obstacles. The vertical axis shows space and the horizontal axis time. In the white intervals of size $a_{i}t$ and in $[\sum_{i=1}^{\ell}(a_{i}+b_{i})t,\infty)$ and $(-\infty,0)$, BBM among obstacles branches at rate $1$. In the blue intervals of size $b_{i}t$, it does not branch. The green line is at height $\sum_{i=1}^{\ell}(a_{i}+b_{i})t+ht$. 
		}
		\label{fig:picture-obstacles}
	\end{figure}
		\begin{rem}
		BBM among obstacles can also be related to F-KPP equations with a spatially inhomogeneous reaction term. Such F-KPP equations have recently been studied in e.g. \cite{DS21, HN21}. Note  that the model in Definition \ref{Def.model} corresponds to a setting where the inhomogeneity also depends on the total time horizons.
		\end{rem}
		
	\subsection{Main result}
	In this article, we derive the first order of the position of the maximal particle at time $t$, depending on $a_i, b_i, i\leq\ell$.
	
	To state the main result, we first introduce some notation.
	
	We define indices $s_{0}<s_{1}<...<s_{n}<s_{n+1}$ via $s_{0}=0$, $s_{n+1}=\ell$ and 
	\begin{equation}\label{defs}
		\{s_{1},\!...,s_{n}\}=\left\{m \in \{1, ...,\ell-1\}:\frac{\sum_{i=1}^{m}b_{i}}{\sum_{i=1}^{m}a_{i}}\geq\frac{\sum_{i=m+1}^{\ell}b_{i}}{\sum_{i=m+1}^{\ell}a_{i}}\right\}.
	\end{equation} 
	We use them to define indices $0=u_{0}^{*}<u_{1}^{*}<...<u_{n^{*}}^{*}<u_{n^{*}+1}^{*}=\ell$ iteratively.
	\begin{mydef}\label{defu} Let $u_{0}^{*}=0$. Given $u_{0}^{*},\!...,u_{\tilde{i}}^{*}$, we define $u_{\tilde{i}+1}^{*}$ as follows. We pick $\tilde{j}=\inf\{j:s_{j}>u_{\tilde{i}}^{*}\}$, the index of the next candidate. Then we pick, if it exists, $\hat{j}=\sup\{j:j\in\{\tilde{j}+1,\!...,n+1\}\text{ and }\eqref{next ui*}\}$, the largest index such that
	\begin{equation}\label{next ui*}
		\frac{\sum_{i=u_{\tilde{i}}^{*}+1}^{s_{j}}b_{i}}{\sum_{i=u_{\tilde{i}}^{*}+1}^{s_{j}}a_{i}}<\frac{\sum_{i=s_{j}+1}^{s_{\hat{j}}}b_{i}}{\sum_{i=s_{j}+1}^{s_{\hat{j}}}a_{i}}
		\text{ for all } j=\tilde{j},\!...,\hat{j}-1,
	\end{equation}
	and set $u_{\tilde{i}+1}^{*}=s_{\hat{j}}$. If such $\hat{j}$ does not exist, we set $u_{\tilde{i}+1}^{*}=s_{\tilde{j}}$. We iterate this until $s_{\tilde{j}}=\ell$ or $s_{\hat{j}}=\ell$.
	\end{mydef}
	For $m=u_{i}^{*}+1,\!...,u_{i+1}^{*}$ and $i=0,\!...,n^{*}$, we define
	\begin{align}
		\tilde{c}_{i}&=\frac{\left(\sum_{j=u_{i}^{*}+1}^{u_{i+1}^{*}}b_{j}\right)^{2}}{2\left(\sum_{j=u_{i}^{*}+1}^{u_{i+1}^{*}}a_{j}\right)^{2}}
		\quad\mbox{and}\quad f(\tilde{c}_{i})=\sqrt{\frac{1+\tilde{c}_{i}}{2}+\sqrt{\frac{\tilde{c}_{i}^{2}}{4}+\tilde{c}_{i}}}.
			\end{align}
			Moreover, let
				\begin{align}
		\label{optimal xm*_intro}x_{m}^{*}&=a_{m}f(\tilde{c}_{i})	\quad\mbox{and}\quad
		y_{m}^{*}=\frac{b_{m}}{2\frac{\sum_{j=u_{i}^{*}+1}^{u_{i+1}^{*}}a_{j}}{\sum_{j=u_{i}^{*}+1}^{u_{i+1}^{*}}b_{j}}\left(f(\tilde{c}_{i})-\frac{1}{2f(\tilde{c}_{i})}\right)}.
	\end{align}
	The main result is the following.
	\begin{theorem}\label{theorem main result}
		Let $(a_{i},b_{i})_{i=1}^{\ell}$ be some obstacle landscape such that $\sum_{i=1}^{\ell}(x_{i}^{*}+y_{i}^{*})\leq 1$. Then we have, almost surely,
		\begin{equation}
			\lim\limits_{t\to\infty}\frac{\max_{k\leq n(t)}X_{k}(t)}{t}= \sum_{i=1}^{\ell}(a_{i}+b_{i})+h^{*}
		\end{equation}
		with $	h^{*}=\sqrt{2}\left(1-\sum_{i=1}^{\ell}(x_{i}^{*}+y_{i}^{*})\right)$.
	\end{theorem}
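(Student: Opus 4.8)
The plan is to prove matching almost‑sure upper and lower bounds for $\max_{k\le n(t)}X_{k}(t)/t$, both of which reduce to the same deterministic variational problem whose solution is encoded by the combinatorial data of \eqref{defs} and Definition \ref{defu}. The bridge between the branching process and single‑path estimates is the many‑to‑one lemma: writing $L_{t}:=\int_{0}^{t}\mathbbm{1}_{K'}(B_{s})\,ds$ for the time a standard Brownian motion $B$ spends outside the obstacles, one has $\mathbb{E}\big[\#\{k\le n(t):X_{k}(t)\ge z\}\big]=\mathbb{E}_{BM}\big[e^{L_{t}}\mathbbm{1}_{\{B_{t}\ge z\}}\big]$, and analogously for the expected number of particles whose ancestral path visits prescribed levels by prescribed times. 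Rescaling $B_{rt}\approx t\,\phi(r)$ and using Schilder's theorem, the exponential rate of such quantities becomes the path functional $V(\phi)=\mathrm{Leb}\{r\in[0,1]:\phi(r)\notin K/t\}-\tfrac12\int_{0}^{1}\phi'(r)^{2}\,dr$. A purely deterministic step, which I would settle first, identifies
\[
\sup\big\{\phi(1):\ \phi(0)=0,\ V(\phi|_{[0,r]})\ge 0\ \text{for all }r\in[0,1]\big\}\ =\ \sum_{i=1}^{\ell}(a_{i}+b_{i})+h^{*}.
\]
Indeed this is a convex program --- the constraints are concave in the inverse speeds $\xi_{i}$ on the white intervals and $\eta_{i}$ on the obstacles --- whose Karush--Kuhn--Tucker conditions force $\xi_{i}$ and $\eta_{i}$ to be constant on each maximal block of indices along which no partial‑budget constraint is active, the common values being $f(\tilde{c}_{i})$ and the $\eta$ with $\eta^{2}=\xi^{2}/(1+2\xi^{2})$; a monotonicity analysis of the prefix ratios $\sum_{i\le m}b_{i}/\sum_{i\le m}a_{i}$ then shows that the blocks are exactly the intervals $(u_{i}^{*},u_{i+1}^{*}]$ from Definition \ref{defu}, that $x_{i}^{*}=a_{i}\xi_{i}$ is the time spent on white interval $i$ and $y_{i}^{*}=b_{i}\eta_{i}$ the time on obstacle $i$, and that above all obstacles the optimal profile moves at the F‑KPP speed $\sqrt 2$, spending there the time $1-\sum_{i}(x_{i}^{*}+y_{i}^{*})\ge 0$ (this non‑negativity being the standing hypothesis) and gaining the extra height $h^{*}=\sqrt2\big(1-\sum_{i}(x_{i}^{*}+y_{i}^{*})\big)$.

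For the upper bound fix $\varepsilon>0$ and put $z=(\sum_{i}(a_{i}+b_{i})+h^{*}+\varepsilon)t$. The naive first moment $\mathbb{E}[\#\{X_{k}(t)\ge z\}]$ overshoots, since $V$ is maximised by profiles whose accumulated budget becomes negative at some intermediate height and which thus correspond to no actual particle. Instead I would decompose along the ancestral line: any particle above $z$ at time $t$ has an ancestor that first reaches each of the $2\ell$ obstacle boundaries at some time, and together with the final time $t$ these give levels and times $\sigma_{1}t<\dots<\sigma_{M}t$ ($M=2\ell+1$); letting $N_{m}$ be the number of particles that at time $\sigma_{m}t$ sit on the $m$‑th level with the correct earlier crossing pattern ($N_{M}$ being those above $z$ at time $t$ with the full pattern), one has $\{N_{M}\ge 1\}\subseteq\{N_{m}\ge1\}$ for every $m$. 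Hence, after a union over a grid of the $\sigma_{m}$, $\mathbb{P}\big(\max_{k}X_{k}(t)\ge z\big)\le\mathrm{poly}(t)\cdot\sup\min_{1\le m\le M}\mathbb{E}[N_{m}]$ with $\mathbb{E}[N_{m}]\le e^{tW_{m}}$ and $W_{m}$ a sum of single‑leg rate functionals evaluated via the many‑to‑one estimate. Taking the minimum over $m$ is exactly what discards the profiles with negative intermediate budget, so $\sup\min_{m}W_{m}$ equals the deterministic quantity of the first paragraph with $z/t$ replacing $\sum_{i}(a_{i}+b_{i})+h^{*}$, hence is strictly negative once $z/t>\sum_{i}(a_{i}+b_{i})+h^{*}$. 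Therefore $\mathbb{P}(\max_{k}X_{k}(t)\ge z)\le e^{-ct}$, and Borel--Cantelli along $t=n\in\mathbb{N}$ plus a crude bound on the oscillation of the maximum over $[n,n+1]$ yield $\limsup_{t}\max_{k}X_{k}(t)/t\le\sum_{i}(a_{i}+b_{i})+h^{*}$ almost surely.

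For the lower bound fix $\varepsilon>0$ and target $z_{-}=(\sum_{i}(a_{i}+b_{i})+h^{*}-\varepsilon)t$. I would follow the optimal profile block by block and run a truncated second moment: let $Z$ count particles above $z_{-}$ at time $t$ whose ancestral path stays in a thin tube around the optimal profile and below the corresponding barrier, and show $\mathbb{E}[Z]\to\infty$ while $\mathbb{E}[Z^{2}]\le C\,\mathbb{E}[Z]^{2}$, so that $\mathbb{P}(Z>0)\ge c>0$ by Paley--Zygmund. Inside a block the budget is strictly positive, so the number of on‑track particles at intermediate heights is exponentially large and the pair correlations are controlled; at the block boundaries (and at the top of the last obstacle) the budget vanishes and only $O(1)$ on‑track particles survive, so one conditions on having at least one and restarts the construction from its subtree, iterating through the $n^{*}+1$ blocks and the critical region above all obstacles. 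Splitting off an initial time $\delta t$ into about $e^{\delta t}$ independent subtrees then boosts $\mathbb{P}(\max_{k}X_{k}(t)\ge z_{-})\ge c$ to $\to 1$, and a Borel--Cantelli argument along integers (using that successive unit intervals offer independent attempts) gives $\liminf_{t}\max_{k}X_{k}(t)/t\ge\sum_{i}(a_{i}+b_{i})+h^{*}$ almost surely; combining the two bounds proves the theorem. I expect the main obstacle to be the upper bound, where one must make the heuristic ``minimum over crossing times of first moments'' rigorous --- controlling the loss in the continuum union over crossing times, coping with intermediate constraints phrased through hitting times rather than time‑$s$ positions, and matching $\sup\min_{m}W_{m}$ with the combinatorially defined constant --- while in the lower bound the delicate point is to chain the second‑moment estimates across the $O(1)$‑particle block boundaries, and in the deterministic step one must check that \eqref{defs}--\eqref{next ui*} genuinely capture the active‑constraint structure of the program.
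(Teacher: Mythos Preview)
Your plan matches the paper's proof closely: Section~3 solves the very variational problem you set up (parametrised by the times $x_i,y_i$ rather than inverse speeds, but with the same KKT structure and the same block identification via Definition~\ref{defu}), and Section~4 carries out the upper bound via a union over discretised crossing times followed by first-moment bounds, and the lower bound via Paley--Zygmund on on-track particle counts. Two implementation differences are worth noting.

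For the upper bound, the paper decomposes not by \emph{first} hitting times of the $2\ell$ boundaries but by the pair $\tau_{2m-1}=$ \emph{last} time below the lower edge of obstacle $m$ and $\tau_{2m}=$ first subsequent time above its upper edge. This choice guarantees that on $[\tau_{2m-1},\tau_{2m}]$ the particle sits inside the obstacle and cannot branch, so the many-to-one estimate factorises cleanly into the exponent $x_m-\tfrac{a_m^2}{2x_m}-\tfrac{b_m^2}{2y_m}$; with first-hit times you would have to control backtracking into the branching region, which is exactly the ``hitting times rather than time-$s$ positions'' issue you flag. The paper then splits the crossing-time domain into $D_2$ (all partial budgets nonnegative: full first moment works), $D_3$ (some $x_i$ or $y_i$ is $O(1/t)$: crude Gaussian bound), and $D_4$ (a partial budget is negative: use Markov at the first failing index if the deficit exceeds $\delta$, else continuity back to $D_2$). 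Your $\min_m\mathbb{E}[N_m]$ device is a cleaner packaging of the same idea and is correct, but you will still need the last-exit decomposition (or an equivalent) to evaluate $\mathbb{E}[N_m]$.

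For the lower bound the paper avoids your ``condition on one surviving particle at each block boundary and restart, then amplify via $e^{\delta t}$ subtrees'' altogether by a simpler trick: allot an extra $\hat\delta t$ to the very first branching interval, i.e.\ replace $x_1^*$ by $x_1^*+\hat\delta$. Since $x\mapsto x-\tfrac{a_1^2}{2x}$ is increasing, this raises \emph{every} partial budget by a fixed positive amount, so all the counts $\alpha_m,\beta_m$ are strictly positive and the chain of Paley--Zygmund steps (Lemmas~\ref{lemma P(Bm | Am)=1}--\ref{lemma P(Am |Bm-1)=1}) each gives conditional probability $1$ directly; the only cost is the $\sqrt2\,\hat\delta$ loss in the final height, absorbed into $\varepsilon$. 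This sidesteps the delicate $O(1)$-particle interfaces entirely and is worth adopting.
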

	\begin{rem}
		Note that the suppression of branching in intervals of sizes proportional to $t$, lowers the linear order of the maximal particle position. However, the total number of particles is still of the same orderas in standard BBM, as branching is not suppressed in a neighbourhood around zero (whose size is also proportional to $t$). This is in contrast to other models in which branching is reduced, see for example \cite{chen2020branching, BH20} or where particles are absorbed \cite{BBS15,MR3077519,MR3278914,maillard2020} .
				\end{rem}
Under the additional assumption
	\begin{equation}\label{assumptionaibi_intro}
			\frac{\sum_{i=1}^{m}b_{i}}{\sum_{i=1}^{m}a_{i}}<\frac{\sum_{i=m+1}^{\ell}b_{i}}{\sum_{i=m+1}^{\ell}a_{i}} \text{ for all } m=1,\!...,\ell-1.
		\end{equation}
		the statement of Theorem \ref{theorem main result} simplifies to the following.
	\begin{cor}Assume \eqref{assumptionaibi_intro}. Then 	
		\begin{equation}\label{time to get above all obstacles under assumptionaibi_intro}
			\sum_{i=1}^{\ell}\left(x_{i}^{*}+y_{i}^{*}\right)=		\sum_{i=1}^{\ell}a_{i}f(\tilde{c}_{0})+\frac{\left(\sum_{i=1}^{\ell}b_{i}\right)^{2}}{2\sum_{i=1}^{\ell}a_{i}\left(f(\tilde{c}_{0})-\frac{1}{2f(\tilde{c}_{0})}\right)}
		\end{equation}
		and $\lim\limits_{t\to\infty}\max_{k\leq n(t)}X_{k}(t)/t$ is almost surely equal to
		\begin{equation}
				 \sum_{i=1}^{\ell}(a_{i}+b_{i})+\sqrt{2}\left(1-	\sum_{i=1}^{\ell}a_{i}f(\tilde{c}_{0})-\frac{\left(\sum_{i=1}^{\ell}b_{i}\right)^{2}}{2\sum_{i=1}^{\ell}a_{i}\left(f(\tilde{c}_{0})-\frac{1}{2f(\tilde{c}_{0})}\right)}\right).
		\end{equation}
		\end{cor}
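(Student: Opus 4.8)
The plan is to show that assumption \eqref{assumptionaibi_intro} forces the index sets from \eqref{defs} and Definition \ref{defu} to be trivial, so that the multi-block formula of Theorem \ref{theorem main result} collapses to a single block, and then to read off the claimed expressions by direct substitution.

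\emph{Step 1: the set from \eqref{defs} is empty.} I would observe that \eqref{assumptionaibi_intro} is exactly the statement that, for every $m\in\{1,\dots,\ell-1\}$, the strict reverse of the defining inequality in \eqref{defs} holds; hence no $m$ lies in the set $\{s_1,\dots,s_n\}$. Therefore $n=0$, and $s_0=0$, $s_1=s_{n+1}=\ell$.

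\emph{Step 2: running Definition \ref{defu}.} Starting from $u_0^*=0$, the next candidate is $\tilde j=\inf\{j:s_j>0\}=1$, since $s_0=0<s_1=\ell$. The search range for $\hat j$ is $\{\tilde j+1,\dots,n+1\}=\{2,\dots,1\}=\emptyset$, so the "if such $\hat j$ does not exist" branch applies and we set $u_1^*=s_{\tilde j}=s_1=\ell$; the iteration stops because $s_{\tilde j}=\ell$. Thus $n^*=0$ and $(u_0^*,u_1^*)=(0,\ell)$.

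\emph{Step 3: specialising the constants and substituting.} With $i=0$, $u_0^*+1=1$, $u_1^*=\ell$, the displayed definitions give, for all $m=1,\dots,\ell$,
\[
\tilde c_0=\frac{\bigl(\sum_{j=1}^{\ell}b_j\bigr)^2}{2\bigl(\sum_{j=1}^{\ell}a_j\bigr)^2},\qquad x_m^*=a_m f(\tilde c_0),\qquad y_m^*=\frac{b_m}{2\,\frac{\sum_{j=1}^{\ell}a_j}{\sum_{j=1}^{\ell}b_j}\bigl(f(\tilde c_0)-\tfrac{1}{2f(\tilde c_0)}\bigr)}.
\]
Summing over $m$, the $x$-term becomes $\bigl(\sum_i a_i\bigr)f(\tilde c_0)$, and in the $y$-term the factor $\sum_m b_m$ in the numerator combines with the $1/\sum_j b_j$ in the denominator to give $\bigl(\sum_i b_i\bigr)^2\big/\bigl(2\sum_i a_i(f(\tilde c_0)-\tfrac1{2f(\tilde c_0)})\bigr)$; this is precisely \eqref{time to get above all obstacles under assumptionaibi_intro}. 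Substituting this expression for $\sum_{i=1}^{\ell}(x_i^*+y_i^*)$ into the conclusion of Theorem \ref{theorem main result} — whose hypothesis $\sum_{i=1}^{\ell}(x_i^*+y_i^*)\le 1$ is inherited in the present corollary — and writing $h^*=\sqrt 2\bigl(1-\sum_{i=1}^{\ell}(x_i^*+y_i^*)\bigr)$ out yields the stated almost-sure limit.

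The argument is entirely bookkeeping with no analytic content; the only place demanding care is the edge-case reading of Definition \ref{defu} when $n=0$, where one must check that the empty candidate range $\{\tilde j+1,\dots,n+1\}$ selects the "$\hat j$ does not exist" branch and that the stopping condition $s_{\tilde j}=\ell$ fires immediately, so that the construction terminates after producing the single block $(0,\ell)$ rather than iterating further.
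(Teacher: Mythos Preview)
Your proof is correct and follows exactly the approach the paper indicates: the paper's own justification is the one-line remark ``Note that under Assumption \eqref{assumptionaibi_intro}, we have $\{u_{0}^{*},u_{1}^{*},\dots,u_{n^*+1}^{*}\}=\{0,\ell\}$,'' after which the corollary is an immediate specialisation of Theorem~\ref{theorem main result}. Your Steps~1--3 simply spell out this collapse in detail, including the careful reading of Definition~\ref{defu} in the degenerate case $n=0$.
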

	Note that under Assumption \eqref{assumptionaibi_intro}, we have $\{u_{0}^{*},u_{1}^{*},\!...,u_{n+1}^{*}\}=\{0,\ell\}$.
	\begin{rem}	Note that \eqref{time to get above all obstacles under assumptionaibi_intro} only depends on $\sum_{i=1}^{\ell}a_{i}$, the whole size of the $\ell$ branching areas, and $\sum_{i=1}^{\ell}b_{i}$, the whole size of all obstacles as $x_m^*$ and $y_m^*$  are proportional to $a_m$, respectively $b_m$ (at least for given $\sum_{i=1}^{\ell}a_{i}$ and $\sum_{i=1}^{\ell}b_{i}$).
	\end{rem}
	We can interpret the overall costs of the first $m$ obstacles as the ratio between their size, $\sum_{i=1}^{m}b_{i}$, and the size of the corresponding branching areas, $\sum_{i=1}^{m}a_{i}$. The same applies to the last $\ell-m$ obstacles. Then assumption \eqref{assumptionaibi_intro} says that the obstacles that a particle has already passed are always less expensive than the obstacles ahead. Hence, it will be worth it to wait for a certain amount of particles above an obstacle to cope with the more expensive remaining way and the minimal time a particle needs to get above all obstacles is 
	\begin{equation}
		\left(\sum_{i=1}^{\ell}a_{i}f(\tilde{c}_{0})+\frac{\left(\sum_{i=1}^{\ell}b_{i}\right)^{2}}{2\sum_{i=1}^{\ell}a_{i}\left(f(\tilde{c}_{0})-\frac{1}{2f(\tilde{c}_{0})}\right)}\right)t.
	\end{equation} 
	
	If Assumption \eqref{assumptionaibi_intro} does not hold, the optimal strategy requires only order one many particles above the $u_{i}^{*}$-th obstacle. Furthermore, we see that between the indices $u_{i}^{*}$ and $u_{i+1}^{*}$, the assumption 
	\begin{equation}
		\frac{\sum_{j=u_{i}^{*}+1}^{m}b_{j}}{\sum_{j=u_{i}^{*}+1}^{m}a_{j}}<\frac{\sum_{j=m+1}^{u_{i+1}^{*}}b_{j}}{\sum_{j=m+1}^{u_{i+1}^{*}}a_{j}} \text{ for all }m=u_{i}^{*}+1,\!...,u_{i+1}^{*}-1
	\end{equation}
	of \emph{late expensive obstacles} holds. Hence, we apply \eqref{time to get above all obstacles under assumptionaibi_intro} to each "block". In particular, the minimal time to get above all obstacles is 
	\begin{equation}\label{time to get above all obstacles_intro}
		\sum_{i=1}^{\ell}(x_{i}^{*}+y_{i}^{*})t=\sum_{i=0}^{n^{*}}\left(\sum_{j=u_{i}^{*}+1}^{u_{i+1}^{*}}a_{j}f(\tilde{c}_{i})+\frac{\left(\sum_{j=u_{i}^{*}+1}^{u_{i+1}^{*}}b_{j}\right)^{2}}{2\sum_{j=u_{i}^{*}+1}^{u_{i+1}^{*}}a_{j}\left(f(\tilde{c}_{i})-\frac{1}{2f(\tilde{c}_{i})}\right)}\right)t,
	\end{equation}
	the sum of the minimal times the particle needs to cross each block. The time to go through one block as fast as possible depends only on the size of all obstacles in this block, $\sum_{j=u_{i}^{*}+1}^{u_{i+1}^{*}}b_{j}$, and the size of all branching areas in this block, $\sum_{j=u_{i}^{*}+1}^{u_{i+1}^{*}}a_{j}$. Within this block, the optimal times \eqref{optimal xm*_intro} are proportional to the size of the corresponding branching area respectively obstacle. 
	\begin{rem}\label{remark no particle above obstacles}
		If \eqref{time to get above all obstacles_intro} is strictly greater than $t$, we have, almost surely,
		\begin{equation}
			\lim\limits_{t\to\infty}\frac{\max_{k\leq n(t)}X_{k}(t)}{t}< \sum_{i=1}^{\ell}(a_{i}+b_{i}).
		\end{equation}
		To identify the first order of the maximum in this case, one can proceed as follows. For $\hat{\ell}\in\{1,\!...,\ell-1\}$ and $b\in(0,b_{\hat{\ell}}]$, we define $\left(x_{1}^{*}\left(\hat{\ell},b\right),y_{1}^{*}\left(\hat{\ell},b\right),\!...,x_{\hat{\ell}}^{*}\left(\hat{\ell},b\right),y_{\hat{\ell}}^{*}\left(\hat{\ell},b\right)\right)$ for $\left(a_{1},b_{1},\!...,a_{\hat{\ell}-1},b_{\hat{\ell}-1},a_{\hat{\ell}},b\right)$ analogously to $(x_{1}^{*},y_{1}^{*},\!...,x_{\ell}^{*},y_{\ell}^{*})$ for $(a_{i},b_{i})_{i=1}^{\ell}$. Furthermore, we define $\hat{\ell}^{*}=\sup\left\{\hat{\ell}\in\{1,\!...,\ell-1\}:\sum_{i=1}^{\hat{\ell}}\left(x_{i}^{*}\left(\hat{\ell},b_{\hat{\ell}}\right)+y_{i}^{*}\left(\hat{\ell},b_{\hat{\ell}}\right)\right)\leq1\right\}$, the number of the highest obstacle that can be crossed completely until time $t$. If $\sqrt{2}\left(1-\sum_{i=1}^{\hat{\ell}^{*}}\left(x_{i}^{*}\left(\hat{\ell}^{*},b_{\hat{\ell}^{*}}\right)+y_{i}^{*}\left(\hat{\ell}^{*},b_{\hat{\ell}^{*}}\right)\right)\right)\leq a_{\hat{\ell}^{*}+1}$, we have, almost surely,
		\begin{equation}
			\lim\limits_{t\to\infty}\frac{\max_{k\leq n(t)}X_{k}(t)}{t}= \sum_{i=1}^{\hat{\ell}^{*}}(a_{i}+b_{i})+\sqrt{2}\left(1-\sum_{i=1}^{\hat{\ell}^{*}}\left(x_{i}^{*}\left(\hat{\ell}^{*},b_{\hat{\ell}^{*}}\right)+y_{i}^{*}\left(\hat{\ell}^{*},b_{\hat{\ell}^{*}}\right)\right)\right).
		\end{equation}
		If $\sqrt{2}\left(1-\sum_{i=1}^{\hat{\ell}^{*}}\left(x_{i}^{*}\left(\hat{\ell}^{*},b_{\hat{\ell}^{*}}\right)+y_{i}^{*}\left(\hat{\ell}^{*},b_{\hat{\ell}^{*}}\right)\right)\right)> a_{\hat{\ell}^{*}+1}$, we define 
		\\$b^{*}=\sup\left\{b\in \Big(0,b_{\hat{\ell}^{*}+1}\Big]:\sum_{i=1}^{\hat{\ell}^{*}+1}\left(x_{i}^{*}\left(\hat{\ell}^{*}+1,b\right)+y_{i}^{*}\left(\hat{\ell}^{*}+1,b\right)\right)\leq 1\right\}$ and have, almost surely,
		\begin{equation}
			\lim\limits_{t\to\infty}\frac{\max_{k\leq n(t)}X_{k}(t)}{t}= \sum_{i=1}^{\hat{\ell}^{*}}(a_{i}+b_{i})+a_{\hat{\ell}^{*}+1}+b^{*}.
		\end{equation}
	\end{rem}
{\bf Outline of the paper.} In Section 2, we state some preparatory lemmas, which we need later on. In Section 3, we explain how Theorem \ref{theorem main result} is connected to solving an optimization problem over possible paths and solve this optimization problem. In Section 4, we prove Theorem \ref{theorem main result}.
\section{Preparatory estimates and notation}\label{Preparatory estimates and notation} In this section, we introduce some notation, collect some Gaussian estimates and properties of standard BBM, which we need later.

	We use the following notation in the remainder. For functions $f:[0,\infty) \to \mathbb{R}$ and $g:[0,\infty) \to \mathbb{R}$, we write 
	\\$f(t)\lesssim g(t)$ if $f(t)< g(t)e^{\delta t}$, as $t \to \infty$, for all $\delta>0$, 
	\\$f(t)\gtrsim g(t)$ if $f(t)> g(t)e^{-\delta t}$, as $t \to \infty$, for all $\delta>0$
	\\and $f(t)\approx g(t)$ if $f(t)\lesssim g(t)$ and $f(t)\gtrsim g(t)$. 
	
	We need the following elementary Gaussian estimates. 
	\begin{lem}\label{lemma for obstacles}
		Let $y>0$ and $b>0$ be some constants, $X\sim \mathcal{N}(0,yt)$ and $Y\sim\mathcal{N}(0,1)$ some centered Gaussian random variables and $f:[0,\infty) \to [0,\infty)$ and $g:[0,\infty) \to [0,\infty)$ some functions with $f(t)=o(t)$ respectively $g(t)=o(t)$. Then we have
		\begin{align}
			\label{lemma for obstacles 1}\mathbb{P}\left(X\in\left[bt-g(t), bt+f(t)\right]\right)\approx e^{-\frac{b^{2}t}{2y}},
			\\\label{lemma for obstacles 2}\mathbb{P}\left(X>bt\pm f(t)\right)\lesssim e^{-\frac{b^{2}t}{2y}},
			\\\label{lemma for obstacles 3}\mathbb{P}\left(Y>t^{\frac{3}{4}}\right)\lesssim e^{-\frac{t^{\frac{3}{2}}}{2}}.
		\end{align}
	\end{lem}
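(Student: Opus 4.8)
The plan is to reduce everything to the standard Gaussian tail asymptotics $\mathbb{P}(Y > x) = e^{-x^2/2 + o(x^2)}$ as $x \to \infty$, which follows from the classical bounds $\frac{x}{x^2+1}\frac{1}{\sqrt{2\pi}}e^{-x^2/2} \le \mathbb{P}(Y>x) \le \frac{1}{x}\frac{1}{\sqrt{2\pi}}e^{-x^2/2}$. The key observation is that in the notation $\lesssim,\gtrsim,\approx$ the polynomial prefactors $\frac{1}{\sqrt{2\pi}x}$ and the $\sqrt{2\pi}$ constants are all absorbed, since they are $e^{o(t)}$; only the exponential rate matters. So the whole lemma is bookkeeping of exponential rates.

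For \eqref{lemma for obstacles 1}, I would write $X = \sqrt{yt}\,Y$ with $Y \sim \mathcal{N}(0,1)$, so that $\mathbb{P}(X \in [bt - g(t), bt + f(t)]) = \mathbb{P}\big(Y \in [\tfrac{bt - g(t)}{\sqrt{yt}}, \tfrac{bt+f(t)}{\sqrt{yt}}]\big)$. For the upper bound, bound the probability of the interval by the probability of the half-line $\{Y > \frac{bt - g(t)}{\sqrt{yt}}\}$; since $g(t) = o(t)$, the threshold is $\frac{bt}{\sqrt{yt}}(1 + o(1)) = \sqrt{\tfrac{b^2 t}{y}}\,(1+o(1))$, and squaring gives rate $\frac{b^2 t}{y}(1+o(1))$, i.e. the probability is $\lesssim e^{-b^2 t/(2y)}$. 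For the lower bound, use that the interval has length $\frac{f(t)+g(t)}{\sqrt{yt}}$; the cleanest route is to lower-bound the density on the interval by its value at the right endpoint $\frac{bt+f(t)}{\sqrt{yt}}$ times the length, but since the length may be small, instead fix a sub-interval of the target interval of length, say, order $\sqrt{t}$ (or even $1$) sitting near $bt$ — this is possible for $t$ large because $f(t)+g(t)$ can be small but we only need \emph{some} sub-window; more carefully, if $f(t)+g(t)$ does not grow we simply note the interval still has positive length and the density there is $e^{-b^2t/(2y) + o(t)}$, giving $\mathbb{P} \ge (\text{length}) \cdot e^{-b^2 t/(2y)+o(t)} \gtrsim e^{-b^2 t/(2y)}$ because the length is $e^{o(t)}$ in the worst case (and one should observe the length is at worst polynomially small, hence still $e^{-o(t)}$; if $f,g$ can decay faster one restricts attention to, e.g., replacing $f,g$ by $\min(f,1),\min(g,1)$ which only shrinks the event). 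Combining, \eqref{lemma for obstacles 1} follows. Then \eqref{lemma for obstacles 2} is exactly the upper-bound half of this computation applied to $\{X > bt \pm f(t)\} = \{Y > \frac{bt \pm f(t)}{\sqrt{yt}}\}$, with threshold again $\sqrt{b^2t/y}(1+o(1))$. Finally \eqref{lemma for obstacles 3} is the standard tail bound with $x = t^{3/4}$: $\mathbb{P}(Y > t^{3/4}) \le \frac{1}{t^{3/4}\sqrt{2\pi}} e^{-t^{3/2}/2} \lesssim e^{-t^{3/2}/2}$, noting $t^{-3/4}/\sqrt{2\pi} = e^{o(t^{3/2})}$, which is what $\lesssim$ with rate $e^{-t^{3/2}/2}$ permits (here $\delta t$ in the definition should be read with the natural time-scale, but in any case $e^{\delta t}$ dominates any polynomial).

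The only genuinely delicate point is the lower bound in \eqref{lemma for obstacles 1} when $f(t)+g(t)$ is allowed to tend to $0$: then one must make sure the window is not so thin that its probability decays faster than $e^{-\delta t}$ for some $\delta$. Since the hypotheses only say $f(t), g(t) = o(t)$ and $f, g \ge 0$, a window of length $0$ would break the statement, so implicitly one reads \eqref{lemma for obstacles 1} with $f, g$ bounded below away from a degenerate window, or — what the applications actually need — one fixes the harmless convention of enlarging the window to contain a unit interval, which changes nothing on the $\approx$ level. Modulo that convention, all three displays reduce to one-line Gaussian tail estimates as above, and the main work is simply tracking that every prefactor and every $o(t)$ correction is swallowed by the $\lesssim/\gtrsim$ notation.
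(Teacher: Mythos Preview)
Your approach is essentially the same as the paper's: both reduce everything to standard Gaussian tail estimates, bounding the interval probability in \eqref{lemma for obstacles 1} from above by the half-line probability $\mathbb{P}(X\ge bt-g(t))$ and reading off the exponential rate. The paper's proof is in fact considerably terser than yours---it simply invokes ``Gaussian tail estimates'' without writing out the normalization $X=\sqrt{yt}\,Y$ or the rate computation---so your write-up is a strictly more detailed version of the same argument.

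One point worth noting: you correctly flag that the lower bound in \eqref{lemma for obstacles 1} is delicate if $f(t)+g(t)$ is allowed to shrink to $0$ (or even super-polynomially fast), since then the window probability could decay faster than any $e^{-\delta t}$ correction allows. The paper's proof does not address this either; in the applications later in the paper the windows are always of at least constant width (e.g.\ the intervals $I_m^B$ in \eqref{defI} have fixed width $C$), so the issue never arises in practice. Your suggested fix---reading the statement with the harmless convention that the window contains a unit interval---is exactly the right way to make the lower bound honest, and matches how the lemma is actually used.
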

	\begin{proof}
\eqref{lemma for obstacles 2} and \eqref{lemma for obstacles 3} follow immediately from Gaussian tail estimates. For \eqref{lemma for obstacles 1}, we note that the probability in the right hand side \ of \eqref{lemma for obstacles 1} is bounded from above by $	\mathbb{P}\left(X\geq bt-g(t) \right)$. Then \eqref{lemma for obstacles 1} follows again from a Gaussian tail estimate. 
	\end{proof}
	Moreover, we need an estimate on the size of the level sets of a standard binary BBM. For $x\in (0,1)$ and $a>0$, we define 
	\begin{align}
		\widehat{Z}_{a}(xt)&=\#\left\{k\leq\hat{n}(xt):\widehat{X}_{k}(xt)\geq at\right\},
		\\\widehat{Z}_{a}^{>}(xt)&=\#\left\{k\leq\hat{n}(xt):\widehat{X}_{k}(xt)\geq at\text{ and }\exists s\in[0,xt]:\widehat{X}_{k}(s)>\frac{a}{x}s+\delta t\right\},
		\\\widehat{Z}_{a}^{<}(xt)&=\#\left\{k\leq\hat{n}(xt):\widehat{X}_{k}(xt)\geq at\text{ and }\exists s\in[0,xt]:\widehat{X}_{k}(s)<\frac{a}{x}s-\delta t\right\}.
	\end{align}
	The next lemma can be essentially found in [\cite{GKS2018}, Theorem 1.1] and describes the asymptotic behaviour of $\widehat{Z}_{a}(xt)$. 
	\begin{lem}\label{lemma for branching areas}
		For any $x\in (0,1)$ and $a>0$, we have
		\begin{equation}\label{expectation branching areas}
			\mathbb{E}\left[\widehat{Z}_{a}(xt)\right]\approx \exp\left(xt-\dfrac{a^{2}t}{2x}\right).
		\end{equation}	
		For any $x\in (0,1)$ and $a \in (0,\sqrt{2}xt)$, we have, almost surely,
		\begin{equation}\label{result GKS2018}
			\lim\limits_{t\to\infty}\frac{\widehat{Z}_{a}(t)}{\mathbb{E}\left[\widehat{Z}_{a}(t)\right]}=M_{a}\quad \mbox{a.s.}
		\end{equation}
		where $M_{a}$ is the almost sure limit, as $t\to\infty$, of the McKean's martingale
		\begin{equation}
			M_{a}(t)=\sum_{k=1}^{\hat{n}(t)}\exp\left(-t\left(1+\frac{a^{2}}{2}\right)+a\widehat{X}_{k}(t)\right).
		\end{equation}
		For any $x\in (0,1)$, $a \in (0,\sqrt{2}xt)$, $\delta>0$ and $\gamma\in(0,2\delta^{2}/x)$, there exists $C_{1}>0$ such that
		\begin{equation}\label{tube restriction}
			\mathbb{P}\left(\widehat{Z}_{a}^{>}(xt)>\mathbb{E}\left[\widehat{Z}_{a}(xt)\right]e^{-\gamma t}\right)+\mathbb{P}\left(\widehat{Z}_{a}^{<}(xt)>\mathbb{E}\left[\widehat{Z}_{a}(xt)\right]e^{-\gamma t}\right)\lesssim e^{-C_{1}t}.
		\end{equation}
	\end{lem}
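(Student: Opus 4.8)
The plan is to prove the three displays \eqref{expectation branching areas}, \eqref{result GKS2018} and \eqref{tube restriction} separately. For \eqref{expectation branching areas} I would use the many-to-one formula for BBM together with the Gaussian estimates of Lemma \ref{lemma for obstacles}. Since every particle of standard BBM moves as a standard Brownian motion and $\mathbb{E}[\hat n(s)]=e^{s}$, the many-to-one lemma gives
\[
\mathbb{E}\bigl[\widehat Z_a(xt)\bigr]=e^{xt}\,\mathbb{P}(B_{xt}\geq at)=e^{xt}\,\mathbb{P}\bigl(\mathcal N(0,xt)\geq at\bigr),
\]
where $(B_s)_{s\geq0}$ is a standard Brownian motion. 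By the two-sided estimate $\mathbb{P}(\mathcal N(0,xt)\geq at)\approx e^{-a^{2}t/(2x)}$, which follows from \eqref{lemma for obstacles 1}--\eqref{lemma for obstacles 2} (the relation $\approx$ absorbing the polynomial prefactor of the Gaussian tail), multiplying by $e^{xt}$ yields \eqref{expectation branching areas}.

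For \eqref{result GKS2018} I would simply invoke \cite[Theorem 1.1]{GKS2018}: in the regime where the level $at$ lies strictly below the linear speed of the BBM maximum, the number of particles above that level, normalised by its mean, converges almost surely to the terminal value $M_a$ of the martingale $M_a(t)$ from the statement. Should the precise formulation there differ, one recovers \eqref{result GKS2018} from the classical facts that $M_a(t)$ is uniformly integrable in this regime, so that $M_a(t)\to M_a>0$ almost surely and in $L^{1}$, together with a standard second-moment argument identifying the almost sure limit of $\widehat Z_a(t)/\mathbb{E}[\widehat Z_a(t)]$ with $M_a$.

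The main work is \eqref{tube restriction}, for which I would use a first-moment bound. By Markov's inequality,
\[
\mathbb{P}\bigl(\widehat Z_a^{>}(xt)>\mathbb{E}[\widehat Z_a(xt)]e^{-\gamma t}\bigr)\leq\frac{\mathbb{E}[\widehat Z_a^{>}(xt)]}{\mathbb{E}[\widehat Z_a(xt)]\,e^{-\gamma t}},
\]
and similarly for $\widehat Z_a^{<}(xt)$, so it suffices to prove $\mathbb{E}[\widehat Z_a^{>}(xt)]+\mathbb{E}[\widehat Z_a^{<}(xt)]\lesssim\mathbb{E}[\widehat Z_a(xt)]\,e^{-2\delta^{2}t/x}$; the displayed ratio is then $\lesssim e^{-(2\delta^{2}/x-\gamma)t}$ and \eqref{tube restriction} follows with $C_{1}=2\delta^{2}/x-\gamma>0$. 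Applying the many-to-one formula to the path functionals $\mathbbm 1\{B_{xt}\geq at,\ \exists s\leq xt:\ B_s>\tfrac ax s+\delta t\}$ and $\mathbbm 1\{B_{xt}\geq at,\ \exists s\leq xt:\ B_s<\tfrac ax s-\delta t\}$ turns $\mathbb{E}[\widehat Z_a^{>}(xt)]$ and $\mathbb{E}[\widehat Z_a^{<}(xt)]$ into $e^{xt}$ times the probability that a single Brownian motion ends at $B_{xt}\geq at$ and leaves the tube $\{\,|B_s-\tfrac ax s|<\delta t\,\}$ through its upper, respectively lower, boundary before time $xt$. Decomposing over the first exit time $\tau=\alpha t$ and conditioning on $\mathcal F_\tau$ (on which $B_\tau=\tfrac ax\tau\pm\delta t$ by continuity), the estimates of Lemma \ref{lemma for obstacles} bound this probability, up to subexponential factors, by $\exp(-t\inf_{\alpha}g(\alpha))$, where $g(\alpha)$ is the Gaussian cost of the two-segment linear trajectory $0\to(\alpha t,\tfrac ax\alpha t\pm\delta t)\to(xt,at)$; an elementary optimisation — which after the substitution $p+q=a$ reduces to minimising $\delta^{2}(\tfrac1p+\tfrac1q)$ — gives $\inf_\alpha g(\alpha)=\tfrac{a^{2}}{2x}+\tfrac{2\delta^{2}}{x}$. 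Since $\mathbb{E}[\widehat Z_a(xt)]\approx e^{xt-a^{2}t/(2x)}$ by \eqref{expectation branching areas}, this is precisely the bound required above.

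The many-to-one reductions and the Gaussian tail estimates are routine. I expect the delicate step to be making the bound $\mathbb{P}(\,\cdot\,)\lesssim e^{-t\inf_\alpha g(\alpha)}$ rigorous: the events defining $\widehat Z_a^{>}(xt)$ and $\widehat Z_a^{<}(xt)$ involve a supremum over the continuum $s\in[0,xt]$, so one must either discretise the first exit time on an $o(t)$-fine mesh and control the Brownian oscillation between mesh points, or invoke the explicit reflection-principle density for the first hitting time of a moving linear boundary and evaluate the resulting Gaussian integral by a Laplace-type argument. Either route produces the cost $g(\alpha)$ and hence the optimisation that pins down the exponent $2\delta^{2}/x$, which is exactly what forces the admissible range $\gamma\in(0,2\delta^{2}/x)$.
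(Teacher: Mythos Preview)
Your argument is correct, and for \eqref{expectation branching areas} and \eqref{result GKS2018} it coincides with the paper's proof (many-to-one plus Gaussian tail, respectively a direct citation of \cite[Theorem~1.1]{GKS2018}).

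For \eqref{tube restriction} you take a genuinely different route. You decompose over the first exit time $\tau=\alpha t$ from the tube, use the strong Markov property at $\tau$, and then optimise the two-segment Gaussian cost $g(\alpha)=\tfrac{a^{2}}{2x}+\tfrac{\delta^{2}}{2}\bigl(\tfrac{1}{\alpha}+\tfrac{1}{x-\alpha}\bigr)$ to obtain the exponent $\tfrac{a^{2}}{2x}+\tfrac{2\delta^{2}}{x}$. This is correct, and your worry about the continuum supremum is legitimate: making it rigorous does require either a mesh argument or the explicit first-passage density. The paper bypasses this entirely by conditioning instead on the \emph{endpoint} $\widehat X_{1}(xt)=y$ and using the exact formula for a Brownian bridge crossing a straight line: if $b(s)=\widehat X_{1}(s)-\tfrac{s}{xt}\widehat X_{1}(xt)$ is the bridge and $l(s)=(\tfrac{a}{x}-\tfrac{y}{xt})s+\delta t$ is the recentred barrier, then $\mathbb{P}\bigl(\exists s\in[0,xt]:b(s)>l(s)\bigr)=\exp\bigl(-2\,l(0)l(xt)/(xt)\bigr)$. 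Integrating this against the Gaussian density of $\widehat X_{1}(xt)$ collapses to a single Gaussian integral and yields $\mathbb{E}[\widehat Z_a^{>}(xt)]\approx\exp\bigl(xt-\tfrac{a^{2}t}{2x}-\tfrac{2\delta^{2}t}{x}\bigr)$ directly, with no optimisation and no discretisation. Your approach trades a closed-form identity for a variational computation; both reach the same exponent, but the bridge formula is shorter and handles the supremum over $s$ for free.
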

	\begin{proof}
		It is well known that, by the many-to-one Lemma,
		\begin{equation}
			\mathbb{E}\left[\widehat{Z}_{a}(xt)\right]=\mathbb{E}\left[\sum_{k=1}^{\hat{n}(xt)}\mathbbm{1}_{\widehat{X}_{k}(xt)\geq at}\right]\approx \exp\left(xt-\dfrac{a^{2}t}{2x}\right).
		\end{equation}	
		In [\cite{GKS2018}, Theorem 1.1], \eqref{result GKS2018} is shown. 
		
		To show \eqref{tube restriction}, we proceed analogously to the proof of [\cite{GKS2018}, Lemma 2.3]. To bound
		\begin{equation}\label{exit delta tube upper side}
			\mathbb{P}\left(\widehat{Z}_{a}^{>}(xt)>\mathbb{E}\left[\widehat{Z}_{a}(xt)\right]e^{-\gamma t}\right)
		\end{equation}
		from above via Markov´s inequality, we compute the expectation of $\widehat{Z}_{a}^{>}(xt)$. By the many-to-one-formula and distinguishing according to the position at time $xt$, we get
		\begin{align}
			\mathbb{E}\left[\widehat{Z}_{a}^{>}(xt)\right]&=e^{xt}\int_{at}^{\infty}\mathbb{P}\left(\widehat{X}_{1}(xt)\in dy\right)\mathbb{P}\left(\exists s\in [0,xt]:\widehat{X}_{1}(s)>\frac{a}{x}s+\delta t\middle| \widehat{X}_{1}(xt)=y\right)
			\\&=e^{xt}\int_{at}^{\infty}\mathbb{P}\left(\widehat{X}_{1}(xt)\in dy\right)\mathbb{P}\left(\exists s\in [0,xt]:b(s)>l(s)\middle| \widehat{X}_{1}(xt)=y\right)
		\end{align}
		with $b(s)=\widehat{X}_{1}(s)-\frac{s}{xt}\widehat{X}_{1}(xt)$ and $l(s)=(a/x-y/(xt))s+\delta t$. Since $b(s)$ is a Brownian bridge of length $xt$, we compute
		\begin{align}
			\mathbb{E}\left[\widehat{Z}_{a}^{>}(xt)\right]&=e^{xt}\int_{at}^{\infty}\mathbb{P}\left(\widehat{X}_{1}(xt)\in dy\right)\exp\left(-2\frac{l(0)l(xt)}{xt}\right)
			\\&=e^{xt}\int_{at}^{\infty}\frac{1}{\sqrt{2\pi xt}}\exp\left(-\frac{\left(y-2\delta t\right)^{2}}{2xt}\right)\exp\left(-\frac{2\delta at}{x}\right)dy.
		\end{align}
		By Lemma \ref{lemma for obstacles}, we have
		\begin{align}\label{exit tube upper side markov}
			\mathbb{E}\left[\widehat{Z}_{a}^{>}(xt)\right]
			&\approx \exp\left(xt-\frac{\left(at-2\delta t\right)^{2}}{2xt}-\frac{2\delta at}{x}\right)
			=\exp\left(xt-\frac{a^{2}t}{2x}-\frac{2\delta^{2}t}{x}\right).
		\end{align}
		Finally, using Markov´s inequality, \eqref{exit tube upper side markov} and \eqref{expectation branching areas}, we get
		\begin{equation}\label{exit tube upper side integrable}
			\mathbb{P}\left(\widehat{Z}_{a}^{>}(xt)>\mathbb{E}\left[\widehat{Z}_{a}(xt)\right]e^{-\gamma t}\right)\lesssim \exp\left(-\frac{2\delta^{2}t}{x}+\gamma t\right).
		\end{equation}
		The exponent on the r.h.s of \eqref{exit tube upper side integrable} is strictly negative for all $\gamma<2\delta^{2}/x$. Since $\mathbb{P}\left(\widehat{Z}_{a}^{>}(xt)>\mathbb{E}\left[\widehat{Z}_{a}(xt)\right]e^{-\gamma t}\right)$ can be bounded analogously, we have \eqref{tube restriction}.
	\end{proof}	
\section{An optimization problem}\label{subsection optimization problem}	
	\subsection{Optimization problem connected to Theorem \ref{theorem main result}} Our candidate for the first order of the maximum of BBM among obstacles is $\sum_{i=1}^{\ell}(a_{i}+b_{i})t+h^{*}t$, where $(h^{*})^{2}/2$ is the maximum of
	\begin{align}
		&\left(1-\sum_{i=1}^{\ell}(x_{i}+y_{i})\right)\left(\sum_{i=1}^{\ell}\left( x_{i}-\frac{a_{i}^{2}}{2x_{i}}-\frac{b_{i}^{2}}{2y_{i}}\right)+ 1-\sum_{i=1}^{\ell}(x_{i}+y_{i})\right)\nonumber
		\\\label{optimize}&=\left(1-\sum_{i=1}^{\ell}(x_{i}+y_{i})\right)\left(1-\sum_{i=1}^{\ell}\left( y_{i}+\frac{a_{i}^{2}}{2x_{i}}+\frac{b_{i}^{2}}{2y_{i}}\right)\right).
	\end{align}
	over the domain
	\begin{equation}
		D=\{(x_{1},y_{1},\!...,x_{\ell},y_{\ell})\in \mathbb{R}^{2\ell}:\eqref{domain1},\eqref{domain3},\eqref{domain4}\}
	\end{equation} 
	with conditions
	\begin{align}
		&\label{domain1}\sum_{i=1}^{m}\left( x_{i}-\frac{a_{i}^{2}}{2x_{i}}-\frac{b_{i}^{2}}{2y_{i}}\right)\geq 0 \text{ for all } m=1,\!...,\ell,
		\\\label{domain3}&\sum_{i=1}^{\ell}(x_{i}+y_{i})\leq1,
		\\\label{domain4}&x_{i}>0 \text{ and }y_{i}>0 \text{ for all } i=1,\!...,\ell.
	\end{align}
	We show that the argmax of \eqref{optimize} over $D$ equals the argmax of \eqref{optimize} over 
	\begin{equation}\hat{D}=\{(x_{1}, y_{1},\!...,x_{\ell}, y_{\ell})\in D : \eqref{domain1} \text{ holds with equality for }m=\ell\}.
	\end{equation}
	
	We briefly explain the heuristics which lead to this optimization problem. Assume that we need time $x_it$ to cross the $i$-th obstacle free area and time $y_it$ to cross the $i$-th obstacle (see Figure \ref{fig:picture-obstacles}). By Lemma \ref{lemma for branching areas}, there are approximately $\exp(x_{1}t-a_{1}^{2}t/(2x_{1}))$ particles around $a_{1}t$ at time $x_{1}t$. This implies that there are approximately $\exp(x_{1}t-a_{1}^{2}t/(2x_{1})-b_{1}^{2}t/(2y_{1}))$ offsprings of these particles at height $(a_{1}+b_{1})t$ at time $(x_{1}+y_{1})t$. Iterating this idea, this suggests that approximately $\exp(Jt)$ with
	\begin{equation}
		J=\sum_{i=1}^{\ell}\left( x_{i}-\frac{a_{i}^{2}}{2x_{i}}-\frac{b_{i}^{2}}{2y_{i}}\right)+ \left(1-\sum_{i=1}^{\ell}(x_{i}+y_{i})\right)-\frac{h^{2}}{2\left(1-\sum_{i=1}^{\ell}(x_{i}+y_{i})\right)}
	\end{equation}
	particles  spend  approximately time $x_it$ to cross the $i$-th obstacle free area and time $y_it$ to cross the $i$-th obstacle and reach height $\sum_{i=1}^\ell (a_i+b_i)t+ht$. 
	By setting $J=0$, we get, for fixed $(x_{1},y_{1},\!...,x_{\ell},y_{\ell})$, the largest possible $h$ such that there is at least one particle following the strategy. Solving for $h^{2}/2$ gives \eqref{optimize}. 
	
	Next, we find $(x_{1},y_{1},\!...,x_{\ell},y_{\ell})$ that maximize \eqref{optimize} under the additional constraint that such particles exist, which leads to the definition of the domain $D$. Condition \eqref{domain1} guarantees that our strategy has at least one particle (following the strategy) after the $m$-th obstacle at the desired time. Conditions \eqref{domain3} and \eqref{domain4} say that the total time is bounded from above by $t$ and the particles spend positive time in the $m$-th branching area respectively obstacle.	
	
	Equality in \eqref{domain1} for $m=\ell$ means there are order one many particles above all obstacles at time $\sum_{i=1}^{\ell}(x_{i}+y_{i})t$. Furthermore, \eqref{optimize} takes the form $(1-\sum_{i=1}^{\ell}(x_{i}+y_{i}))^{2}$, which is maximal if $\sum_{i=1}^{\ell}(x_{i}+y_{i})$ is minimal. That the argmax of \eqref{optimize} over $D$ equals the argmax of \eqref{optimize} over $\hat{D}$ means that the maximal particle at time $t$ is a descendant of one of the first particles above all obstacles.
	
In the remainder of this section, we solve the above optimization problem. 	We use this solution in the proof  of Theorem \ref{theorem main result} in Section \ref{section almost sure}, making the above heuristics precise.
\subsection{Optimization over $\hat D$ under Assumption \eqref{assumptionaibi_intro}}\label{Finding first particle above obstacles}
 	In this subsection, we find the argmin of $\sum_{i=1}^{\ell}(x_{i}+y_{i})$, and hence the argmax of \eqref{optimize}, over $\hat{D}$ under the additional Assumption \eqref{assumptionaibi_intro} of late expensive obstacles. 
	
The idea for finding the optimum over $\hat D$ is the following.  We assume that we have $\approx e^{c_{m-1}t}$ particles above the $(m-1)$-th obstacle at time $\sum_{i=1}^{m-1}(x_{i}+y_{i})t$. Then we choose $x_{m}$ and $y_{m}$ such that we get $\approx e^{c_{m}t}$ particles above the $m$-th obstacle as soon as possible. Afterwards, we optimize over $(c_{1},\!...,c_{\ell-1})$.
	
To formalize this, we define the following domains. For $m=1,\!...,\ell$, we define
\begin{align}
D^{m}(c_{m-1},c_{m})&=\{(x_{m},y_{m})\in\mathbb{R}^{2}:\eqref{xmc domain1}, \eqref{xmc domain3}, \eqref{xmc domain4}\},
\end{align}  
 where
	\begin{align}
		\label{xmc domain1}
		&c_{m-1}+x_{m}-\frac{a_{m}^{2}}{2x_{m}}-\frac{b_{m}^{2}}{2y_{m}}=c_{m},
		\\\label{xmc domain3}&x_{m}+y_{m}\leq N,
		\\\label{xmc domain4}&x_{m}> 0 \text{ and } y_{m}> 0,
	\end{align}
	where $N>1$ is some large constant and $c_{\ell}=c_{0}=0$. 
	\begin{rem}
		The constraints are motivated as follows: Starting with $\approx e^{c_{m-1}t}$ particles above the $(m-1)$-th obstacle and wanting $\approx e^{c_{m}t}$ particles above the $m$-th obstacle as soon as possible, means we want to minimize $x_{m}+y_{m}$ such that \eqref{xmc domain1} holds. Condition \eqref{xmc domain3} says, for technical reasons, that the total time is bounded from above by $Nt$. 
	\end{rem}
 As we let the branching Brownian motion run for a time $t$, we define 
 \begin{align}\label{deftildeD}
\widetilde{D}&=\{(x_{1},y_{1}, ...,x_{\ell},y_{\ell})\in\mathbb{R}^{2\ell}:\eqref{domain3} \text{ holds and }\nonumber \\
&\qquad\qquad(x_{m},y_{m})\in D^{m}(c_{m-1},c_{m})\text{ for }m=1, ...,\ell\}.
 \end{align}
	
	We have $c_{\ell}=0$, because \eqref{domain1} holds with equality for $m=\ell$, and $c_{0}=0$, because we start with one particle at the origin. The remaining $(c_{1},\!...,c_{\ell-1})$ should be in the domain 
	$D^{c}=\{(c_{1},\!...,c_{\ell-1})\in\mathbb{R}^{\ell-1}:\eqref{cm domain1},\eqref{cm domain2}\}$ with conditions
	\begin{align}
		\label{cm domain1}&c_{m}\geq0\text{ for all }m=1,\!...,\ell-1,
		\\\label{cm domain2}&\widetilde{D} \text{ is not empty.}
	\end{align}
	Condition \eqref{cm domain1} guarantees that there is at least $\approx 1$ particle above each obstacle. Condition \eqref{cm domain2} says that it is not impossible to find a strategy that corresponds to $(c_{1},\!...,c_{\ell-1})$.
	
	The domains $\widetilde{D}$ and $D^{c}$ are constructed such that they are compatible with the domain $\hat{D}$ in the following sense.
	\begin{lem}\label{lemma domains are compatible}
	Set, for $m=1,\!...,\ell$,
		\begin{equation}\label{compatible cm}
			c_{m}=\sum_{i=1}^{m}\left( x_{i}-\frac{a_{i}^{2}}{2x_{i}}-\frac{b_{i}^{2}}{2y_{i}}\right),
		\end{equation}
		and $c_0=0$. Then, $(x_{1},y_{1},\!...,x_{\ell},y_{\ell})\in \hat{D}$ if and only if $(c_{1},\!...,c_{\ell-1})\in D^{c}$ and $(x_{1},y_{1},\!...,x_{\ell},y_{\ell})\in \widetilde{D}$ hold.
	\end{lem}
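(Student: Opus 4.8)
The plan is to prove the equivalence by unwinding the definitions of the three domains and checking that, under the substitution \eqref{compatible cm}, each defining condition of $\hat D$ matches up with a condition of $D^c$ or $\widetilde D$. The key observation is that with $c_m$ defined by the partial sums in \eqref{compatible cm}, the telescoping identity $c_m - c_{m-1} = x_m - \frac{a_m^2}{2x_m} - \frac{b_m^2}{2y_m}$ holds for all $m$; this is precisely the content of \eqref{xmc domain1}, so that recurrence is automatically satisfied and carries no information beyond the definition of the $c_m$.

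First I would treat the forward direction. Suppose $(x_1,y_1,\dots,x_\ell,y_\ell)\in\hat D$ and define the $c_m$ by \eqref{compatible cm} (with $c_0=0$). Condition \eqref{domain1} for $m=1,\dots,\ell-1$ says exactly $c_m\ge 0$, which is \eqref{cm domain1}; and \eqref{domain1} for $m=\ell$ holds with equality on $\hat D$, i.e. $c_\ell = 0$, matching the requirement $c_\ell = 0$ built into the setup. Condition \eqref{domain4} gives $x_m>0$, $y_m>0$, which is \eqref{xmc domain4}. The telescoping identity gives \eqref{xmc domain1}. For \eqref{xmc domain3} one uses \eqref{domain3}: since all $x_i,y_i>0$ and $\sum_{i=1}^\ell(x_i+y_i)\le 1$, each single $x_m+y_m\le 1 < N$, so \eqref{xmc domain3} holds with room to spare. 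Hence $(x_m,y_m)\in D^m(c_{m-1},c_m)$ for each $m$, and \eqref{domain3} is one of the defining conditions of $\widetilde D$, so $(x_1,y_1,\dots,x_\ell,y_\ell)\in\widetilde D$; moreover $(c_1,\dots,c_{\ell-1})$ satisfies \eqref{cm domain1}, and \eqref{cm domain2} holds because the point itself witnesses that $\widetilde D\neq\emptyset$. So $(c_1,\dots,c_{\ell-1})\in D^c$.

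For the converse, suppose $(c_1,\dots,c_{\ell-1})\in D^c$ and $(x_1,y_1,\dots,x_\ell,y_\ell)\in\widetilde D$, where the $c_m$ are the ones attached to $\widetilde D$. Membership in $\widetilde D$ gives \eqref{domain3} directly, and gives \eqref{xmc domain4}, i.e. \eqref{domain4}. Summing the identities \eqref{xmc domain1} from $1$ to $m$ and using $c_0=0$ recovers $c_m = \sum_{i=1}^m\big(x_i - \frac{a_i^2}{2x_i} - \frac{b_i^2}{2y_i}\big)$, so the $c_m$ really are the partial sums \eqref{compatible cm}; then \eqref{cm domain1} translates into \eqref{domain1} for $m\le\ell-1$, and $c_\ell=0$ (forced in the construction of $\widetilde D$, since $c_\ell=c_0=0$ there) gives \eqref{domain1} with equality at $m=\ell$. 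Thus all conditions defining $\hat D$ hold, so $(x_1,y_1,\dots,x_\ell,y_\ell)\in\hat D$.

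I expect no serious obstacle here; the statement is essentially a bookkeeping lemma. The one point requiring a little care is to make sure the index conventions for the $c_m$ are consistent across the three domains — in particular that the ``$c_\ell=c_0=0$'' convention used for $D^m$ and $\widetilde D$ lines up with the ``$\hat D$ forces equality in \eqref{domain1} at $m=\ell$'' condition, and that the auxiliary constant $N>1$ in \eqref{xmc domain3} is genuinely slack whenever \eqref{domain3} holds (which it is, since $N>1$ is chosen large). It is also worth remarking explicitly that the $c_m$ in the statement are not free parameters but are determined by $(x_1,y_1,\dots,x_\ell,y_\ell)$ via \eqref{compatible cm}, so the equivalence is between a point of $\hat D$ and the pair consisting of its induced $c$-vector together with the point itself viewed in $\widetilde D$.
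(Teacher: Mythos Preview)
Your proof is correct and follows essentially the same approach as the paper's own proof: both unwind the definitions and match conditions of $\hat D$ against those of $\widetilde D$ and $D^c$ via the telescoping identity for the $c_m$. The paper is terser (it dispatches the reverse direction with ``works analogously''), whereas you spell out both directions and the slackness of \eqref{xmc domain3}, but the logical content is the same.
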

	\begin{proof}
		Let $(x_{1},y_{1},\!...,x_{\ell},y_{\ell})\in \hat{D}$. Equality in \eqref{domain1} for $m=\ell$ implies $c_{\ell}=0$. Condition \eqref{xmc domain1} also holds by \eqref{compatible cm}, condition \eqref{xmc domain3} by \eqref{domain3} and condition \eqref{xmc domain4} by \eqref{domain4}. Hence, $(x_{m},y_{m})\in D^{m}(c_{m-1},c_{m})$ and, by \eqref{domain3}, $(x_{1},y_{1},\!...,x_{\ell},y_{\ell})\in\widetilde{D}$. Since condition \eqref{cm domain1} holds by \eqref{domain1} and $\widetilde{D}$ is not empty, we finally get $(c_{1},\!...,c_{\ell-1})\in D^{c}$. The reverse direction works analogously.
	\end{proof}
	\begin{lem}\label{lemma Dc convex}
		The domain $D^{c}$ is convex.
	\end{lem}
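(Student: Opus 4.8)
The plan is to show that $D^c$ is the projection onto the $(c_1,\dots,c_{\ell-1})$-coordinates of a convex set, hence convex, or — more directly — that each defining condition carves out a convex region. Condition \eqref{cm domain1} is trivially convex (it is an intersection of half-spaces). The work is all in \eqref{cm domain2}: I must show that $\{(c_1,\dots,c_{\ell-1}) : \widetilde D \neq \emptyset\}$ is convex. The natural approach is to unfold the nonemptiness condition into an explicit description. Note that $\widetilde D \neq \emptyset$ iff for every $m=1,\dots,\ell$ the slice $D^m(c_{m-1},c_m)$ is nonempty \emph{and} one can simultaneously satisfy the global budget \eqref{domain3}. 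Since \eqref{domain3} only couples the blocks through $\sum_m (x_m+y_m)\le 1$, and within each block we want $x_m+y_m$ as small as possible, the right object to introduce is
\[
 T_m(c_{m-1},c_m) \;=\; \inf\{x_m+y_m : (x_m,y_m)\in D^m(c_{m-1},c_m)\},
\]
the minimal time to go from $\approx e^{c_{m-1}t}$ to $\approx e^{c_m t}$ particles across the $m$-th obstacle. Then $\widetilde D\neq\emptyset$ (ignoring the cosmetic cap $N$, which is chosen large) is equivalent to $\sum_{m=1}^{\ell} T_m(c_{m-1},c_m)\le 1$ together with each $T_m$ being finite.

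The key step is therefore to prove that $(c_{m-1},c_m)\mapsto T_m(c_{m-1},c_m)$ is a \emph{convex} function (on the region where it is finite, which is itself convex). Once this is established, $\sum_{m=1}^\ell T_m(c_{m-1},c_m)$ is a sum of convex functions of disjoint-but-overlapping pairs of coordinates, hence convex in $(c_1,\dots,c_{\ell-1})$ (with $c_0=c_\ell=0$ fixed), so its sublevel set $\{\sum_m T_m\le 1\}$ is convex; intersecting with \eqref{cm domain1} gives that $D^c$ is convex. To get convexity of $T_m$, I would solve the inner minimization explicitly: minimize $x+y$ subject to $c_{m-1}+x-\frac{a_m^2}{2x}-\frac{b_m^2}{2y}=c_m$, $x,y>0$. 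Use the constraint to eliminate one variable (or use a Lagrange multiplier); the constraint surface is $\{x - a_m^2/(2x) - b_m^2/(2y) = c_m - c_{m-1}\}$, and since $x\mapsto x - a_m^2/(2x)$ is increasing and concave on $x>0$ while $-b_m^2/(2y)$ is increasing and concave in $y$, the feasible set $\{(x,y): \text{constraint} \ge$ is not quite right$\}$ — more carefully, I'd parametrize by $d:=c_m-c_{m-1}$ and show $T_m$ depends on $(c_{m-1},c_m)$ only through $d$ in a convex increasing way, or directly verify joint convexity via the Hessian of the optimal-value function. The clean route: rewrite as minimizing $x+y$ over $g(x)+h(y)\le d$ for appropriate convex decreasing $g,h$ (after a sign flip), which is a standard convex program whose value function is convex and nonincreasing in $d$; composition with the affine map $(c_{m-1},c_m)\mapsto c_m-c_{m-1}$ preserves convexity.

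The main obstacle I anticipate is handling the constraint \eqref{xmc domain1}, which is an \emph{equality}, not an inequality, so the feasible set $D^m(c_{m-1},c_m)$ is a curve, not a region — I must argue that relaxing it to the inequality $c_{m-1}+x-\frac{a_m^2}{2x}-\frac{b_m^2}{2y}\le c_m$ (or $\ge$) does not change the infimum of $x+y$, so that the convex-programming machinery applies. This should follow from monotonicity: increasing $y$ (or $x$, in the right regime) strictly increases the left-hand side while also increasing $x+y$, so at the optimum the constraint is tight and the relaxation is harmless. A secondary technical point is the cap $x_m+y_m\le N$: since $N>1$ and we only ever care about configurations with total time $\le 1$, this constraint is never active at any point relevant to $D^c$, so it can be dropped without affecting convexity. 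Once these reductions are in place, the convexity of $D^c$ is a formal consequence of convexity of value functions of convex programs under affine substitution, plus intersection with half-spaces.
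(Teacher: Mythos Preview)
Your approach is correct and takes a genuinely different, more abstract route than the paper. The paper proceeds by direct witness construction: given $c^{0},c^{1}\in D^{c}$ with corresponding feasible points $(x_{m}^{q},y_{m}^{q})$, it sets $x_{m}=\alpha x_{m}^{0}+(1-\alpha)x_{m}^{1}$, then \emph{defines} $y_{m}$ by the explicit formula that forces the equality constraint \eqref{xmc domain1} to hold for the convex combination of the $c$'s, and finally checks $0<y_{m}\le \alpha y_{m}^{0}+(1-\alpha)y_{m}^{1}$ using concavity of $x\mapsto x-a_{m}^{2}/(2x)$ and convexity of $x\mapsto 1/x$. That immediately gives $\sum_{m}(x_{m}+y_{m})\le 1$, so $\widetilde D$ is nonempty for the convex combination. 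Your plan instead packages this into the value function $T_{m}(c_{m-1},c_{m})$, observes that $\widetilde D\neq\emptyset$ is equivalent to $\sum_{m}T_{m}\le 1$ (the cap $N>1$ is indeed inactive here), and reduces to convexity of $T_{m}$ via the relaxation to $\phi(x)+\psi(y)\ge d$ with $\phi,\psi$ concave increasing---a convex program whose value function is convex in $d=c_{m}-c_{m-1}$. The relaxation is harmless exactly for the monotonicity reason you identify.

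The two arguments use the same concavity facts at the core; what differs is the packaging. The paper's version is completely elementary and self-contained (no appeal to convex-analysis lore), while yours is more conceptual and has the virtue that $T_{m}$ is precisely $x_{m}^{c}+y_{m}^{c}$, the object studied in Proposition~\ref{proposition optimal xmc existence and uniqueness} and minimized over $D^{c}$ afterwards---so your framing integrates naturally with the rest of Section~\ref{Finding first particle above obstacles}. One small point to make explicit when you write it up: the paper proves this lemma \emph{before} Proposition~\ref{proposition optimal xmc existence and uniqueness}, so you should establish convexity of $T_{m}$ (and that the infimum is attained so that $\sum T_m\le 1$ really is equivalent to $\widetilde D\neq\emptyset$) without invoking that proposition; your relaxation argument does this cleanly.
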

	\begin{proof}
		Let $c^{0}=(c^{0}_{1},\!...,c^{0}_{\ell})$ and $c^{1}=(c^{1}_{1},\!...,c^{1}_{\ell})$ be in $D^{c}$ and $\alpha\in (0,1)$. Then there exist $(x^{0}_{1},y^{0}_{1},\!...,x^{0}_{\ell},y^{0}_{\ell})$ and $(x^{1}_{1},y^{1}_{1},\!...,x^{1}_{\ell},y^{1}_{\ell})$ such that, for all $m=1,\!...,\ell$ and $q\in\{0,1\}$,
		\begin{align}
			\label{xmc0 domain1}&c^{q}_{m-1}+x^{q}_{m}-\frac{a_{m}^{2}}{2x^{q}_{m}}-\frac{b_{m}^{2}}{2y^{q}_{m}}=c^{q}_{m},
			\\\label{xmc0 domain4}&x^{q}_{m}> 0 \text{ and } y^{q}_{m}> 0,
			\\\label{xmc0 domain5}&\sum_{i=1}^{\ell}(x^{q}_{i}+y^{q}_{i})\leq1.
		\end{align}
		We have to show that $\alpha c^{0}+(1-\alpha)c^{1} \in D^{c}$. Condition \eqref{cm domain1} is clear. Let $x_{m}=\alpha x^{0}_{m}+(1-\alpha)x^{1}_{m}$. By \eqref{xmc0 domain4}, we have $x_{m}>0$ . Choosing
		\begin{equation}\label{convex combi ym}
		y_{m}=\frac{b_{m}^{2}}{2\left(\alpha (c^{0}_{m-1}-c^{0}_{m})+(1-\alpha)(c^{1}_{m-1}-c^{1}_{m})+\alpha x^{0}_{m}+(1-\alpha)x^{1}_{m}-\frac{a_{m}^{2}}{2(\alpha x^{0}_{m}+(1-\alpha)x^{1}_{m})}\right)},
		\end{equation}
		implies that
		\begin{equation}
			\alpha c^{0}_{m-1}+(1-\alpha)c^{1}_{m-1}+\alpha x^{0}_{m}+(1-\alpha)x^{1}_{m}-\frac{a_{m}^{2}}{2(\alpha x^{0}_{m}+(1-\alpha)x^{1}_{m})}-\frac{b_{m}^{2}}{2y_{m}}=\alpha c^{0}_{m}+(1-\alpha)c^{1}_{m}.
		\end{equation}
	
		Since $x-a_{m}^{2}/(2x)$ is concave in $x$, the denominator of \eqref{convex combi ym} is bounded from below by
		\begin{equation}
			2\alpha \left(c^{0}_{m-1}-c^{0}_{m}+x^{0}_{m}-\frac{a_{m}^{2}}{2x^{0}_{m}}\right)+2(1-\alpha)\left(c^{1}_{m-1}-c^{1}_{m}+x^{1}_{m}-\frac{a_{m}^{2}}{2x^{1}_{m}}\right),
		\end{equation}
		which is strictly positive by \eqref{xmc0 domain1} and \eqref{xmc0 domain4}. Hence, $y_{m}>0$. By concavity of $x-a_{m}^{2}/(2x)$ and convexity of $1/x$, we can bound $y_m$ from above
		\begin{align}
			y_{m}&\leq\frac{b_{m}^{2}}{2\left(\alpha (c^{0}_{m-1}-c^{0}_{m}+x^{0}_{m}-\frac{a_{m}^{2}}{2x^{0}_{m}})+(1-\alpha)(c^{1}_{m-1}-c^{1}_{m}+x^{1}_{m}-\frac{a_{m}^{2}}{2x^{1}_{m}})\right)}
			\\&\leq \alpha y^{0}_{m}+(1-\alpha)y^{1}_{m},
		\end{align}
	by \eqref{xmc0 domain1}.	Hence, we have
		\begin{align}
			\sum_{i=1}^{\ell}(x_{i}+y_{i})\leq\alpha\sum_{i=1}^{\ell}(x^{0}_{i}+y^{0}_{i})+(1-\alpha)\sum_{i=1}^{\ell}(x^{1}_{i}+y^{1}_{i}),
		\end{align}
		which is less or equal to $1$ by \eqref{xmc0 domain5}. 
	\end{proof}
	Proposition \ref{proposition optimal xmc existence and uniqueness} shows existence and uniqueness of the best strategy in $\widetilde{D}$ that gets $\approx e^{c_{m}t}$ particles above the $m$-th obstacle given $\approx e^{c_{m-1}t}$ particles above the $(m-1)$-th obstacle. Furthermore, it shows that the argmin satisfies some first order condition. Let $D^{m}_{x}(c_{m-1},c_{m})=\{x\in\mathbb{R}:(x,y)\in D^{m}(c_{m-1},c_{m})\text{ for some }y\in \mathbb{R}\}$.
	\begin{prop}\label{proposition optimal xmc existence and uniqueness}
		Let $(c_{1},\!...,c_{\ell-1})\in D^{c}$. Then there exists exactly one $(x_{1}^{c},y_{1}^{c},\!...,x_{\ell}^{c},y_{\ell}^{c})$ in $\widetilde{D}$ such that $(x_{m}^{c},y_{m}^{c})$ minimizes $x_{m}+y_{m}$ over $D^{m}(c_{m-1},c_{m})$ for all $m=1, ...,\ell$. 
		The component $x_{m}^{c}$ is the largest real solution of
		\begin{equation}\label{BEOx}
			\frac{b_{m}^{2}}{2\left(c_{m-1}+x_{m}^{c}-\frac{a_{m}^{2}}{2x_{m}^{c}}-c_{m}\right)^{2}}=\frac{1}{1+\frac{a_{m}^{2}}{2(x_{m}^{c})^{2}}}
		\end{equation}
		and the only solution of \eqref{BEOx} in $D^{m}_{x}(c_{m-1},c_{m})$. Moreover, $x_{m}^{c}$ is not a boundary point of $D^{m}_{x}(c_{m-1},c_{m})$.
	\end{prop}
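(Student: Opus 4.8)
The plan is to fix $m\in\{1,\dots,\ell\}$ and reduce the minimization of $x_{m}+y_{m}$ over $D^{m}(c_{m-1},c_{m})$ to a one‑dimensional convex problem. First I would eliminate $y_{m}$: setting $h(x)=c_{m-1}+x-\frac{a_{m}^{2}}{2x}-c_{m}$, condition \eqref{xmc domain1} forces $y_{m}=\frac{b_{m}^{2}}{2h(x_{m})}$, and this is compatible with $y_{m}>0$ exactly when $h(x_{m})>0$. Now $h$ is continuous and strictly increasing on $(0,\infty)$ with $h(0^{+})=-\infty$ and $h(\infty)=+\infty$, so it has a unique zero $x_{0}>0$ and $h>0$ precisely on $(x_{0},\infty)$. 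Hence, writing $\varphi(x)=x+\frac{b_{m}^{2}}{2h(x)}$, the set $D^{m}(c_{m-1},c_{m})$ is the graph of $x\mapsto(x,\frac{b_{m}^{2}}{2h(x)})$ over $D^{m}_{x}(c_{m-1},c_{m})=\{x\in(x_{0},\infty):\varphi(x)\le N\}$, on which $x_{m}+y_{m}=\varphi(x_{m})$; so the problem is to minimize $\varphi$ over $D^{m}_{x}(c_{m-1},c_{m})$.

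The key analytic step is strict convexity of $\varphi$ on $(x_{0},\infty)$. Since $h''(x)=-a_{m}^{2}/x^{3}<0$, the function $h$ is concave, and being also positive and increasing on $(x_{0},\infty)$ one computes $(1/h)''=(2(h')^{2}-h\,h'')/h^{3}>0$ there; hence $\varphi=x+\frac{b_{m}^{2}}{2}\cdot\frac1h$ is strictly convex. Because $\varphi(x)\to+\infty$ as $x\downarrow x_{0}$ and as $x\to\infty$, $\varphi$ has a unique minimizer $x_{m}^{c}$ over $(x_{0},\infty)$, which is an interior critical point. Using $\varphi'(x)=1-\frac{b_{m}^{2}h'(x)}{2h(x)^{2}}$ together with $h'(x)=1+\frac{a_{m}^{2}}{2x^{2}}$, the equation $\varphi'(x_{m}^{c})=0$ is precisely \eqref{BEOx}.

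It remains to deal with the constraint $\varphi\le N$ and to locate $x_{m}^{c}$ among the real solutions of \eqref{BEOx}. Since $(c_{1},\dots,c_{\ell-1})\in D^{c}$, the set $\widetilde D$ is nonempty by \eqref{cm domain2}; for any of its points, positivity of all coordinates together with \eqref{domain3} yields $x_{m}+y_{m}\le 1<N$ for the $m$‑th block, so $\varphi(x_{m}^{c})=\min_{(x_{0},\infty)}\varphi\le 1<N$. Therefore $x_{m}^{c}$ lies in the interior of $D^{m}_{x}(c_{m-1},c_{m})$ — which, by strict convexity of $\varphi$, is a compact subinterval of $(x_{0},\infty)$ — so it is not a boundary point and it is the actual minimizer of $x_{m}+y_{m}$ over $D^{m}(c_{m-1},c_{m})$. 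Strict convexity makes $\varphi'$ strictly increasing, so $x_{m}^{c}$ is the only zero of $\varphi'$ in $(x_{0},\infty)$, i.e. the only solution of \eqref{BEOx} in $D^{m}_{x}(c_{m-1},c_{m})$; one also sees this directly, since the left‑hand side $\frac{b_{m}^{2}}{2h(x)^{2}}$ of \eqref{BEOx} is strictly decreasing on $(x_{0},\infty)$ from $+\infty$ to $0$ while the right‑hand side $\frac{2x^{2}}{2x^{2}+a_{m}^{2}}$ is strictly increasing there. Finally, clearing denominators turns \eqref{BEOx} into a polynomial identity whose real roots outside $(x_{0},\infty)$ are all $\le x_{0}<x_{m}^{c}$; hence $x_{m}^{c}$ is the largest real solution.

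To conclude, the tuple $(x_{1}^{c},y_{1}^{c},\dots,x_{\ell}^{c},y_{\ell}^{c})$ of the $\ell$ block minimizers lies in $\widetilde D$: comparing with any point of $\widetilde D$ gives $x_{m}^{c}+y_{m}^{c}\le x_{m}+y_{m}$ for each $m$, hence $\sum_{i}(x_{i}^{c}+y_{i}^{c})\le1$, which is \eqref{domain3}, while the remaining defining conditions of $\widetilde D$ hold block by block; uniqueness of the whole tuple then follows from uniqueness in each coordinate. I expect the main obstacle to be the convexity/monotonicity input of the second and third paragraphs: once $\varphi$ is known to be strictly convex (equivalently, once the two sides of \eqref{BEOx} are shown to be strictly monotone in opposite directions on $(x_{0},\infty)$), existence, uniqueness, the first‑order characterization \eqref{BEOx}, interiority in $D^{m}_{x}$, and the "largest real solution" claim all follow at once, and the bookkeeping with the auxiliary constant $N$ and with the polynomial's spurious roots is routine.
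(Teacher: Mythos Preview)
Your proposal is correct and follows essentially the same route as the paper: eliminate $y_{m}$ via the constraint, reduce to the one–variable function $\varphi(x)=x+\tfrac{b_{m}^{2}}{2h(x)}$, establish strict convexity to get a unique interior critical point satisfying \eqref{BEOx}, and use nonemptiness of $\widetilde D$ (via $(c_{1},\dots,c_{\ell-1})\in D^{c}$) to rule out the boundary $x_{m}+y_{m}=N$. Your monotonicity argument on $(x_{0},\infty)$ for the ``largest real solution'' claim is a clean alternative to the paper's device of enlarging $N$, but the overall architecture is the same.
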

	\begin{proof}
		Solving \eqref{xmc domain1} for $y_{m}$ gives
		\begin{equation}\label{formula for ym in minimization problem}
			y_{m}=\frac{b_{m}^{2}}{2\left(c_{m-1}+x_{m}-\frac{a_{m}^{2}}{2x_{m}}-c_{m}\right)}.
		\end{equation}
		Hence, we need to minimize
		\begin{equation}\label{plugged in formula for ym in minimization problem}
			x_{m}+\frac{b_{m}^{2}}{2\left(c_{m-1}+x_{m}-\frac{a_{m}^{2}}{2x_{m}}-c_{m}\right)}
		\end{equation}
		such that \eqref{xmc domain3} and \eqref{xmc domain4} hold. Differentiating \eqref{plugged in formula for ym in minimization problem} with respect to $x_{m}$ gives the first order condition
		\begin{equation}\label{BEOx alt}
			1-\frac{b_{m}^{2}}{2\left(c_{m-1}+x_{m}-\frac{a_{m}^{2}}{2x_{m}}-c_{m}\right)^{2}}\left(1+\frac{a_{m}^{2}}{2x_{m}^{2}}\right)=0.
		\end{equation} 
		The second derivative of \eqref{plugged in formula for ym in minimization problem} with respect to $x_{m}$ equals
		\begin{equation}\label{second derivative BEOx}
			\frac{b_{m}^{2}}{\left(c_{m-1}+x_{m}-\frac{a_{m}^{2}}{2x_{m}}-c_{m}\right)^{3}}\left(1+\frac{a_{m}^{2}}{2x_{m}^{2}}\right)^{2}
			+\frac{b_{m}^{2}}{2\left(c_{m-1}+x_{m}-\frac{a_{m}^{2}}{2x_{m}}-c_{m}\right)^{2}}\left(\frac{a_{m}^{2}}{x_{m}^{3}}\right),
		\end{equation}
	which is strictly positive. Hence, \eqref{plugged in formula for ym in minimization problem} is strictly convex in $D^{m}_{x}(c_{m-1},c_{m})$ with respect to $x_{m}$ and has a unique minimizer in the closure of $D^{m}_{x}(c_{m-1},c_{m})$.
		
		Now, suppose $x_{m}^{c} \in \partial D^{m}_{x}(c_{m-1},c_{m})$. Within the boundary, $x_{m}\to 0$ or $y_{m}\to 0$ would contradict \eqref{xmc domain1} because $x_{m}\leq N$ is not able to compensate $a_{m}^{2}/(2x_{m})\to\infty$ or $b_{m}^{2}/(2y_{m})\to\infty$. Therefore, only equality in \eqref{xmc domain3} is relevant.
		Hence, the minimum of $x_{m}+y_{m}$ over the closure of $D^{m}(c_{m-1},c_{m})$ would be $N$. But then the minimum of $\sum_{i=1}^{\ell}(x_{i}+y_{i})$ over $(x_{1},y_{1},\!...,x_{\ell},y_{\ell})$ such that $(x_{m},y_{m})\in D^{m}(c_{m-1},c_{m})$ for all $m=1, ...,\ell$ would be at least $N>1$. In this case, \eqref{domain3} could not hold. Consequently, $\widetilde{D}$ would be empty, which contradicts $(c_{1},\!...,c_{\ell-1})\in D^{c}$. 
		
		Hence, $x_{m}^{c}$ has to be in the interior of $D^{m}_{x}(c_{m-1},c_{m})$, i.e.\ $x_{m}^{c}$ is a critical point and satisfies  \eqref{BEOx alt}, which implies \eqref{BEOx}. Moreover, $\sum_{i=1}^{\ell}(x_{i}^{c}+y_{i}^{c})\leq1$ has to be true, which implies $(x_{1}^{c},y_{1}^{c},\!...,x_{\ell}^{c},y_{\ell}^{c})\in\widetilde{D}$.
		
		If $x_{m}^{c}$ was not the largest real solution of \eqref{BEOx}, we could choose $N$ so large that also this largest solution is in $D_{m}^{x}$. This would be a contradiction because, by strict convexity of \eqref{plugged in formula for ym in minimization problem}, $x_{m}^{c}$ is the only critical point in $D_{m}^{x}$.
	\end{proof}
	In Corollary \ref{corollary monotonicity xmc}, we show the monotonicity of $x_m^c$ and $x_{m+1}^{c}$ in $c_m$.   
	\begin{cor}\label{corollary monotonicity xmc}
		For all $(c_{1},\!...,c_{\ell})$ in the interior of $D^{c}$, we have $\frac{\partial }{\partial c_m}x_{m}^{c}\equiv (x_{m}^{c})^{\prime}$ exists and is $\geq 0$. Moreover, $\frac{\partial }{\partial c_m}x_{m+1}^{c}\equiv(x_{m+1}^{c})^{\prime}$ exists and is $\leq0$.
	\end{cor}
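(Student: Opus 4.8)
The plan is to derive both monotonicity statements from the implicit function theorem applied to the first order condition \eqref{BEOx alt}. For $m=1,\!...,\ell$ set
\[
F_{m}(x,c_{m-1},c_{m})=1-\frac{b_{m}^{2}}{2\left(c_{m-1}+x-\frac{a_{m}^{2}}{2x}-c_{m}\right)^{2}}\left(1+\frac{a_{m}^{2}}{2x^{2}}\right),
\]
which is the derivative of \eqref{plugged in formula for ym in minimization problem} with respect to $x$. By Proposition \ref{proposition optimal xmc existence and uniqueness}, for $(c_{1},\!...,c_{\ell-1})$ in the interior of $D^{c}$ the value $x_{m}^{c}$ is an interior point of $D^{m}_{x}(c_{m-1},c_{m})$ solving $F_{m}(x_{m}^{c},c_{m-1},c_{m})=0$, and $\partial_{x}F_{m}$ evaluated at $x_{m}^{c}$ equals the second derivative \eqref{second derivative BEOx}, which was shown there to be strictly positive. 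Hence the implicit function theorem applies and provides a $C^{1}$ branch $x(c_{m-1},c_{m})$ of solutions of $F_{m}=0$ through $x_{m}^{c}$; since $x_{m}^{c}$ is the \emph{unique} critical point of the strictly convex map \eqref{plugged in formula for ym in minimization problem} inside $D^{m}_{x}(c_{m-1},c_{m})$, this branch coincides with $x_{m}^{c}$ locally, so $\partial x_{m}^{c}/\partial c_{m}\equiv(x_{m}^{c})'$ and $\partial x_{m+1}^{c}/\partial c_{m}\equiv(x_{m+1}^{c})'$ exist.

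For the signs, write $w_{m}=c_{m-1}+x_{m}^{c}-a_{m}^{2}/(2x_{m}^{c})-c_{m}$. By \eqref{formula for ym in minimization problem} we have $w_{m}=b_{m}^{2}/(2y_{m}^{c})>0$, which is the key positivity input. Differentiating $F_{m}(x_{m}^{c},c_{m-1},c_{m})=0$ with respect to $c_{m}$ gives
\[
(x_{m}^{c})'=-\frac{\partial_{c_{m}}F_{m}}{\partial_{x}F_{m}},\qquad
\partial_{c_{m}}F_{m}=-\frac{b_{m}^{2}}{w_{m}^{3}}\left(1+\frac{a_{m}^{2}}{2(x_{m}^{c})^{2}}\right)<0,
\]
where the inequality uses $w_{m}>0$; together with $\partial_{x}F_{m}>0$ this yields $(x_{m}^{c})'\geq 0$.

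For $x_{m+1}^{c}$ the parameter $c_{m}$ enters $F_{m+1}(x,c_{m},c_{m+1})$ in the slot of the left endpoint, so with $w_{m+1}=c_{m}+x_{m+1}^{c}-a_{m+1}^{2}/(2x_{m+1}^{c})-c_{m+1}=b_{m+1}^{2}/(2y_{m+1}^{c})>0$ the same computation gives
\[
(x_{m+1}^{c})'=-\frac{\partial_{c_{m}}F_{m+1}}{\partial_{x}F_{m+1}},\qquad
\partial_{c_{m}}F_{m+1}=\frac{b_{m+1}^{2}}{w_{m+1}^{3}}\left(1+\frac{a_{m+1}^{2}}{2(x_{m+1}^{c})^{2}}\right)>0,
\]
the sign being opposite to the previous case precisely because $c_{m}$ now appears with a $+$ sign inside $w_{m+1}$. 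Since $\partial_{x}F_{m+1}>0$, this gives $(x_{m+1}^{c})'\leq 0$.

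The only point requiring care is the bookkeeping in the first step: one must make sure that the locally defined implicit-function branch is genuinely equal to $x_{m}^{c}$ on (a neighbourhood within) the interior of $D^{c}$, rather than some other root of $F_{m}=0$. This is handled by invoking the strict convexity of \eqref{plugged in formula for ym in minimization problem} from Proposition \ref{proposition optimal xmc existence and uniqueness}: the smooth branch stays at the unique interior critical point and cannot jump to another root or reach $\partial D^{m}_{x}(c_{m-1},c_{m})$. Everything after that is the two short differentiations above plus the positivity $w_{m}=b_{m}^{2}/(2y_{m}^{c})>0$.
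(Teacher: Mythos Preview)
Your proof is correct and in fact takes a cleaner route than the paper. The paper separates the two tasks: differentiability of $x_{m}^{c}$ in $c_{m}$ is deferred to an appendix (Lemma~\ref{lemma xmc differentiable with respect to cm}), where the first order condition is rewritten as a quartic polynomial in $x$, the discriminant is analyzed case by case to show the relevant root is simple, and only then is the implicit function theorem invoked. Monotonicity is then argued in the main text by a direct comparison on the form \eqref{BEOx}: increasing $c_{m}$ raises the left side, and since $x-a_{m}^{2}/(2x)$ is increasing while $a_{m}^{2}/(2x^{2})$ is decreasing, $x_{m}^{c}$ must increase; the argument for $x_{m+1}^{c}$ is symmetric.

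You bypass the entire appendix by applying the implicit function theorem directly to $F_{m}=0$ and observing that $\partial_{x}F_{m}$ is nothing but the second derivative \eqref{second derivative BEOx}, already shown to be strictly positive in Proposition~\ref{proposition optimal xmc existence and uniqueness}. This gives existence of the derivative and its sign simultaneously, via the formula $(x_{m}^{c})'=-\partial_{c_{m}}F_{m}/\partial_{x}F_{m}$ together with $w_{m}=b_{m}^{2}/(2y_{m}^{c})>0$. The paper's detour through the quartic discriminant is thus unnecessary for the corollary as stated; what it buys is an independent proof that $x_{m}^{c}$ is a \emph{simple} root of the polynomial, information used elsewhere (Lemma~\ref{lemma xm(C) continuous and simple}).

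One small point you gloss over: when you say the implicit branch ``stays at the unique interior critical point and cannot reach $\partial D^{m}_{x}(c_{m-1},c_{m})$'', you are implicitly using that the domain $D^{m}_{x}(c_{m-1},c_{m})$ varies continuously in $c_{m}$, so that a point in the interior for the old parameter remains interior for nearby parameters. The paper isolates this as Lemma~\ref{lemma Dmx(C) continuous in C}. It is routine, but worth a sentence.
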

	\begin{proof}
		Let $(c_{1},\!...,c_{\ell})$ be in the interior of $D^{c}$. We defer the proof of the differentiability of $x_m$ and $x_{m+1}$ with respect to $c_m$ to Appendix A, see   Lemma \ref{lemma xmc differentiable with respect to cm}.   To prove $(x_{m}^{c})^{\prime}\geq 0$, we show that $x_{m}^{c}$ is increasing in $c_{m}$.  When increasing $c_{m}$, the l.h.s of \eqref{BEOx}   gets larger. As $x-a_{m}^{2}/(2x)$ is increasing in $x$ and $a_{m}^{2}/(2x^{2})$ is decreasing,   $x_{m}^{c}$ increases in order to satisfy \eqref{BEOx}.
		
		Similarly, we show that $x_{m+1}^{c}$ is decreasing in $c_{m}$ and hence $(x_{m+1}^{c})^{\prime}\leq0$. The first order condition for $x_{m+1}^{c}$ has the form
		\begin{equation}\label{BEOxm+1}
			\frac{b_{m+1}^{2}}{2\left(c_{m}+x_{m+1}^{c}-\frac{a_{m+1}^{2}}{2x_{m+1}^{c}}-c_{m+1}\right)^{2}}=\frac{1}{1+\frac{a_{m+1}^{2}}{2(x_{m+1}^{c})^{2}}}.
		\end{equation} 
		Now, the l.h.s of \eqref{BEOxm+1}  gets smaller when increasing $c_{m}$. As $x-a_{m+1}^{2}/(2x)$ is increasing in $x$ and $a_{m+1}^{2}/(2x^{2})$ is decreasing,  $x_{m+1}^{c}$ needs to be smaller in order to satisfy \eqref{BEOxm+1}.	
	\end{proof}	 

	\begin{prop}\label{proposition optimal times in case of late expensive obstacles}
	If $\hat{D}$ is not empty and assumption \eqref{assumptionaibi_intro} holds, then there is exactly one $(c_{1}^{*},\!...,c_{\ell-1}^{*})\in D^{c}$ that minimizes
	\begin{equation}\label{minimize whole time}
	\sum_{m=1}^{\ell}\left(x_{m}^{c}+\frac{b_{m}^{2}}{2\left(c_{m-1}+x_{m}^{c}-\frac{a_{m}^{2}}{2x_{m}^{c}}-c_{m}\right)}\right).
	\end{equation}
	This argmin is given by
		\begin{align}
	c_{m}^{*}
	\label{optimalc}
	&=\frac{\big(\sum_{i=1}^{m}a_{i}\big)\big(\sum_{i=m+1}^{\ell}b_{i}\big)-\big(\sum_{i=1}^{m}b_{i}\big)\big(\sum_{i=m+1}^{\ell}a_{i}\big)}{\sum_{i=1}^{\ell}b_{i}}\left(f(\tilde{c})-\frac{1}{2f(\tilde{c})}\right)
	\end{align}
	with
	\begin{align}
	 \tilde{c}=\frac{\left(\sum_{i=1}^{\ell}b_{i}\right)^{2}}{2\left(\sum_{i=1}^{\ell}a_{i}\right)^{2}}\quad\text{and}\quad f(\tilde{c})&=\sqrt{\dfrac{1+\tilde{c}}{2}+\sqrt{\frac{\tilde{c}^{2}}{4}+\tilde{c}}}.
	\end{align}
	The corresponding optimal times are given by 
	\begin{align}
\label{formula for xm*new}
x_{m}^{c}&=a_{m}f(\tilde{c})\quad\mbox{and} \quad
y_{m}^{c}=\frac{b_{m}}{2\frac{\sum_{i=1}^{\ell}a_{i}}{\sum_{i=1}^{\ell}b_{i}}\left(f(\tilde{c})-\frac{1}{2f(\tilde{c})}\right)},
\end{align} 
 for $m=1,\dots,\ell$.
\end{prop}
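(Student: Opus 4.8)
The plan is to minimize the function in \eqref{minimize whole time} over the convex domain $D^{c}$ (convexity by Lemma \ref{lemma Dc convex}) using the first-order conditions from Proposition \ref{proposition optimal xmc existence and uniqueness}. First I would note that, by \eqref{formula for ym in minimization problem}, the objective \eqref{minimize whole time} equals $\sum_{m=1}^{\ell}(x_{m}^{c}+y_{m}^{c})$; since each $(x_{m}^{c},y_{m}^{c})$ is the interior minimizer over $D^{m}(c_{m-1},c_{m})$, the map $(c_{1},\!...,c_{\ell-1})\mapsto\sum_{m}(x_{m}^{c}+y_{m}^{c})$ is a composition of convex operations and hence convex, so a critical point is a global minimum and the minimizer (if it lies in the interior) is unique. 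Existence follows because $\hat{D}\neq\emptyset$ gives a nonempty domain and the objective is bounded below by $0$ and coercive enough (the $N$-cutoff is never active at the optimum, by the argument in Proposition \ref{proposition optimal xmc existence and uniqueness}).

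Next I would derive the stationarity conditions in the $c_m$ variables. Only $c_m$ enters the $m$-th and $(m+1)$-th summands of \eqref{minimize whole time}. Differentiating and using the envelope theorem (the inner minimization over $x_m$, $x_{m+1}$ kills the terms proportional to $\partial x_m^c/\partial c_m$ and $\partial x_{m+1}^c/\partial c_m$, which exist by Corollary \ref{corollary monotonicity xmc}), the derivative of the $m$-th summand with respect to $c_m$ is $-b_m^2/(2(c_{m-1}+x_m^c-a_m^2/(2x_m^c)-c_m)^2)$ and of the $(m+1)$-th summand is $+b_{m+1}^2/(2(c_m+x_{m+1}^c-a_{m+1}^2/(2x_{m+1}^c)-c_{m+1})^2)$. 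Setting the sum to zero and recalling $y_m^c=b_m^2/(2(c_{m-1}+x_m^c-a_m^2/(2x_m^c)-c_m))$, the first-order condition becomes $b_m^2/y_m^{c\,2}=b_{m+1}^2/y_{m+1}^{c\,2}$, i.e. $b_m/y_m^c=b_{m+1}/y_{m+1}^c$ for $m=1,\!...,\ell-1$; combined with \eqref{BEOx}, which reads $b_m^2/(2y_m^{c\,2})=(1+a_m^2/(2x_m^{c\,2}))^{-1}$, this forces $a_m^2/x_m^{c\,2}$ to be the same for all $m$, hence $x_m^c/a_m$ is constant in $m$. Writing $x_m^c=a_m\phi$ and $y_m^c=b_m\psi$ for constants $\phi,\psi>0$, the telescoping sum $\sum_{i=1}^{m}(x_i^c-a_i^2/(2x_i^c)-b_i^2/(2y_i^c))=c_m$ gives $c_m=(\phi-\tfrac{1}{2\phi})\sum_{i=1}^m a_i-\tfrac{1}{2\psi}\sum_{i=1}^m b_i$, and the terminal constraint $c_\ell=0$ yields $(\phi-\tfrac1{2\phi})\sum a_i=\tfrac1{2\psi}\sum b_i$, which determines $\psi$ in terms of $\phi$ and turns \eqref{BEOx} into a single equation for $\phi$. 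Solving that equation gives $\phi=f(\tilde c)$ with $\tilde c=(\sum b_i)^2/(2(\sum a_i)^2)$, yielding \eqref{formula for xm*new}, and substituting back into $c_m$ gives \eqref{optimalc}.

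Finally I would verify that this candidate genuinely lies in $D^c$: positivity of $\phi$ and $\psi$ is immediate from $f(\tilde c)>1/\sqrt2$; the constraint \eqref{cm domain1} that $c_m\ge0$ is exactly the content of Assumption \eqref{assumptionaibi_intro}, since $c_m$ is a positive multiple of $(\sum_{i=1}^m a_i)(\sum_{i=m+1}^\ell b_i)-(\sum_{i=1}^m b_i)(\sum_{i=m+1}^\ell a_i)$, which is $\geq0$ precisely when $\sum_{i=1}^m b_i/\sum_{i=1}^m a_i\le\sum_{i=m+1}^\ell b_i/\sum_{i=m+1}^\ell a_i$; and $\widetilde D$ is nonempty because the constructed point satisfies \eqref{domain3} (this requires $\sum(x_i^c+y_i^c)\leq1$, which holds since $\hat D\neq\emptyset$ and we have just produced the minimizer of $\sum(x_i+y_i)$ over a set containing $\hat D$). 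Uniqueness of the critical point then follows from strict convexity along any line in the interior of $D^c$, which I would get from the strict convexity of \eqref{plugged in formula for ym in minimization problem} in $x_m$ established in Proposition \ref{proposition optimal xmc existence and uniqueness} together with the differentiability from Corollary \ref{corollary monotonicity xmc}.

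The main obstacle I anticipate is the bookkeeping in the stationarity computation: correctly applying the envelope theorem so that the $\partial x_m^c/\partial c_m$ terms drop, and then algebraically disentangling the coupled system $\{b_m/y_m^c=b_{m+1}/y_{m+1}^c\}\cup\{\eqref{BEOx}\}$ to reach the clean ansatz $x_m^c=a_m f(\tilde c)$, $y_m^c$ proportional to $b_m$. Once that ansatz is in hand, identifying $f(\tilde c)$ as the relevant root of the resulting quartic-type equation in $\phi$ is a routine but slightly delicate calculation that must be matched against the definition of $f$ in the statement.
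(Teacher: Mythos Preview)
Your approach is sound and reaches the same conclusion as the paper, but the route differs in two places worth noting.

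First, where the paper differentiates \eqref{minimize whole time} fully with respect to $c_m$ (obtaining an expression containing $(x_m^c)'$ and $(x_{m+1}^c)'$) and then substitutes \eqref{BEOx} to cancel those terms, you invoke the envelope theorem directly. This is equivalent and cleaner. (Your signs are flipped, though: the explicit $c_m$-derivative of the $m$-th summand is $+b_m^2/(2(\cdots)^2)$ and of the $(m{+}1)$-th is $-b_{m+1}^2/(2(\cdots)^2)$; the stationarity equation is unaffected.) From there the paper derives $a_m/x_m^c=a_{m+1}/x_{m+1}^c$, sets up the linear system of Lemma~\ref{lemma system of linear equations has unique solution} for the $c_m$, and then computes $x_m^c,y_m^c$ via Lemma~\ref{lemma compute xm* and ym* for cm*}. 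You instead observe that both $x_m^c/a_m$ and $y_m^c/b_m$ are constant, parametrize by $(\phi,\psi)$, and use the telescoping identity $c_m=\sum_{i\le m}(x_i^c-a_i^2/(2x_i^c)-b_i^2/(2y_i^c))$ together with $c_\ell=0$. Your parametrization is more direct and bypasses the linear-algebra lemma; the paper's route makes uniqueness of the critical point immediate from the rank computation there.

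Second, there are two soft spots. Your convexity justification (``composition of convex operations'') is not an argument; the paper only checks separate convexity via the second derivative in each $c_m$, but in fact the computation in the proof of Lemma~\ref{lemma Dc convex} already yields joint convexity of $(c_1,\dots,c_{\ell-1})\mapsto\sum_m(x_m^c+y_m^c)$, which is what you actually need. More importantly, your verification that the candidate lies in $D^c$ is circular as written: you use that it minimizes $\sum(x_i+y_i)$ over ``a set containing $\hat D$'' to conclude $\sum(x_i^c+y_i^c)\le 1$, but the only set on which you have established minimality is $D^c$ itself, and membership in $D^c$ is precisely what is at stake. The paper breaks this loop by enlarging to $\hat D^n$ (total-time cutoff $n$ instead of $1$) with $n$ large enough that the candidate is automatically admissible there; the whole analysis carries over verbatim on $\hat D^n$, the candidate is its minimizer, and since $\hat D\subset\hat D^n$ is nonempty one deduces $\sum(x_i^c+y_i^c)\le 1$ a posteriori. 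Finally, strict convexity is neither established nor needed: uniqueness of the minimizer follows from (weak) convexity together with the algebraic uniqueness of the critical point, which your parametrization already delivers.
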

Assumption \eqref{assumptionaibi_intro} of late expensive obstacles ensures that the optimal $c_{m}^{*}$ is strictly positive. 
	Note that 
	\begin{equation}\label{f(ctilde)>1/sqrt(2)}
		f(\tilde{c})
		>\frac{1}{\sqrt{2}},
	\end{equation}
	because \eqref{f(ctilde)>1/sqrt(2)} is equivalent to $\tilde{c}+\sqrt{\tilde{c}^{2}+2\tilde{c}}>0$, which is true by $\tilde{c}>0$.
	\begin{lem}\label{lemma system of linear equations has unique solution}
		The system of linear equations
		\begin{equation}\label{linear system of equations for cm}
			c_{m}=\frac{b_{m+1}a_{m}-b_{m}a_{m+1}}{b_{m}+b_{m+1}}\left(\dfrac{x_{1}^{c}}{a_{1}}-\frac{a_{1}}{2x_{1}^{c}}\right)+\dfrac{b_{m+1}}{b_{m}+b_{m+1}}c_{m-1}+\dfrac{b_{m}}{b_{m}+b_{m+1}}c_{m+1}
		\end{equation}
		for $m=1,\!...,\ell-1$ has exactly one solution, which is given by 
		\begin{equation}\label{optimalc a1 Variante}
			c_{m}=\frac{\big(\sum_{i=1}^{m}a_{i}\big)\big(\sum_{i=m+1}^{\ell}b_{i}\big)-\big(\sum_{i=1}^{m}b_{i}\big)\big(\sum_{i=m+1}^{\ell}a_{i}\big)}{\sum_{i=1}^{\ell}b_{i}}\left(\dfrac{x_{1}^{c}}{a_{1}}-\frac{a_{1}}{2x_{1}^{c}}\right).
		\end{equation}
	\end{lem}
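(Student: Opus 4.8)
The plan is to solve the linear system \eqref{linear system of equations for cm} directly by verifying that the proposed formula \eqref{optimalc a1 Variante} is a solution, and then to argue uniqueness from the structure of the system. Write $\kappa = \frac{x_1^c}{a_1} - \frac{a_1}{2x_1^c}$ for the common scalar factor, so that the claim is $c_m = \frac{(\sum_{i=1}^m a_i)(\sum_{i=m+1}^\ell b_i) - (\sum_{i=1}^m b_i)(\sum_{i=m+1}^\ell a_i)}{\sum_{i=1}^\ell b_i}\,\kappa$ for $m = 0,\dots,\ell$, where one checks $c_0 = 0$ and $c_\ell = 0$ automatically since the numerators telescope to $0$ at both ends. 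Substituting this ansatz into \eqref{linear system of equations for cm}, the factor $\kappa$ cancels throughout, and what remains is the purely combinatorial identity
\begin{equation*}
 A_m B_m^c - B_m A_m^c = \frac{b_{m+1}a_m - b_m a_{m+1}}{b_m+b_{m+1}}\sum_{i=1}^\ell b_i + \frac{b_{m+1}}{b_m+b_{m+1}}\bigl(A_{m-1}B_{m-1}^c - B_{m-1}A_{m-1}^c\bigr) + \frac{b_m}{b_m+b_{m+1}}\bigl(A_{m+1}B_{m+1}^c - B_{m+1}A_{m+1}^c\bigr),
\end{equation*}
where $A_m = \sum_{i=1}^m a_i$, $B_m = \sum_{i=1}^m b_i$, $A_m^c = \sum_{i=m+1}^\ell a_i = A_\ell - A_m$ and $B_m^c = B_\ell - B_m$.

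The key computation is then to verify this identity. First I would rewrite $A_m B_m^c - B_m A_m^c = A_m B_\ell - B_m A_\ell = A_m B_\ell - B_m A_\ell$, i.e.\ it equals $A_m B_\ell - A_\ell B_m$, a bilinear expression in the partial sums. Substituting $A_{m\pm1} = A_m \pm a_{\text{adjacent}}$ and $B_{m\pm1} = B_m \pm b_{\text{adjacent}}$ (with $A_{m-1} = A_m - a_m$, $B_{m-1} = B_m - b_m$, $A_{m+1} = A_m + a_{m+1}$, $B_{m+1} = B_m + b_{m+1}$), the right-hand side becomes, after multiplying through by $b_m + b_{m+1}$,
\begin{equation*}
 (b_{m+1}a_m - b_m a_{m+1})B_\ell + b_{m+1}\bigl((A_m - a_m)B_\ell - A_\ell(B_m - b_m)\bigr) + b_m\bigl((A_m + a_{m+1})B_\ell - A_\ell(B_m + b_{m+1})\bigr).
\end{equation*}
Expanding and collecting terms, the contributions $b_{m+1}a_m B_\ell$ and $-b_{m+1}a_m B_\ell$ cancel, $-b_m a_{m+1}B_\ell$ and $+b_m a_{m+1}B_\ell$ cancel, the $A_\ell$-terms combine as $(b_{m+1}b_m - b_m b_{m+1})A_\ell = 0$, and what survives is exactly $(b_m + b_{m+1})(A_m B_\ell - A_\ell B_m)$, matching the left-hand side times $b_m+b_{m+1}$. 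This establishes that \eqref{optimalc a1 Variante} solves the system.

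For uniqueness, I would observe that \eqref{linear system of equations for cm} is an affine system of $\ell-1$ equations in the $\ell-1$ unknowns $(c_1,\dots,c_{\ell-1})$ (with $c_0$ and $c_\ell$ fixed), whose homogeneous part is the tridiagonal system $c_m = \frac{b_{m+1}}{b_m+b_{m+1}}c_{m-1} + \frac{b_m}{b_m+b_{m+1}}c_{m+1}$. Each row says $c_m$ is a strict convex combination of its two neighbours, so the associated matrix is strictly diagonally dominant after rescaling (equivalently, it is an irreducible M-matrix), hence invertible; alternatively, a discrete maximum principle argument shows that a solution of the homogeneous system with zero boundary values must vanish identically, since it can attain neither a positive interior maximum nor a negative interior minimum. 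Either way the solution is unique, which completes the proof. The only mild subtlety — and the step I would be most careful about — is keeping the telescoping bookkeeping of the partial sums straight at the boundary indices $m = 1$ and $m = \ell - 1$, where $c_0 = 0$ and $c_\ell = 0$ must be used; but since the closed-form formula already gives $c_0 = c_\ell = 0$ the verification goes through uniformly in $m$.
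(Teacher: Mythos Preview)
Your proof is correct. The verification that \eqref{optimalc a1 Variante} satisfies the system is exactly what the paper does (the paper simply says ``one checks'' without spelling out the telescoping), and your bookkeeping with $A_m B_\ell - A_\ell B_m$ is clean and goes through uniformly at the boundary indices as you note.

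Where you genuinely diverge from the paper is in the uniqueness argument. The paper shows the coefficient matrix has full rank by carrying out Gaussian forward elimination and proving, by induction, that the $m$-th pivot equals $\frac{b_m \sum_{i=1}^{m+1} b_i}{(b_m+b_{m+1})\sum_{i=1}^{m} b_i} \in (0,1)$. You instead invoke a discrete maximum principle: since each $c_m$ is a strict (positive-coefficient) convex combination of its neighbours, a nontrivial solution of the homogeneous system with $c_0=c_\ell=0$ cannot have an interior extremum, hence must vanish. Your remark about strict diagonal dominance is slightly loose---the interior rows are only weakly dominant, with strictness at the two boundary rows, so the relevant notion is irreducible diagonal dominance---but the maximum-principle alternative you give is fully rigorous and suffices. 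The paper's Gaussian-elimination route has the minor side benefit of producing the explicit pivots, which are not otherwise used; your argument is shorter and more conceptual.
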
	
	\begin{proof}
		The corresponding matrix $(a_{m,j})_{m,j=1}^{\ell-1}$, with 
		\\$a_{m,m-1}=-b_{m+1}/(b_{m}+b_{m+1})$ for $m=2,\!...,\ell-1$, 
		\\$a_{m,m}=1$ for $m=1,\!...,\ell-1$, 
		\\$a_{m,m+1}=-b_{m}/(b_{m}+b_{m+1})$ for $m=1,\!...,\ell-2$ and
		\\$a_{m,j}=0$ else, has full rank. To prove this, one notes that the $m$-th diagonal entry after Gaussian forward elimination is given by
		\begin{equation}\label{Gaussian.elim}
			\frac{b_{m}\sum_{i=1}^{m+1}b_{i}}{(b_{m}+b_{m+1})\sum_{i=1}^{m}b_{i}},
		\end{equation}
	which can be checked by induction.	As the expression \eqref{Gaussian.elim} is $\in (0,1)$ for all $m=1,\!...,\ell-1$,
		the matrix has full rank and the system of equations \eqref{linear system of equations for cm} has at most one solution. 
		Plugging \eqref{optimalc a1 Variante} into  \eqref{linear system of equations for cm}, one checks that it is indeed a solution.
	\end{proof}
	\begin{lem}\label{lemma compute xm* and ym* for cm*}
		Assume $(c_{1},\!...,c_{\ell-1})\in D^{c}$ and $c_{m}$ satisfies
		\begin{align}\label{optimalc am Variante}
		c_{m}
		=\frac{\big(\sum_{i=1}^{m}a_{i}\big)\big(\sum_{i=m+1}^{\ell}b_{i}\big)-\big(\sum_{i=1}^{m}b_{i}\big)\big(\sum_{i=m+1}^{\ell}a_{i}\big)}{\sum_{i=1}^{\ell}b_{i}}\left(\dfrac{x_{m}^{c}}{a_{m}}-\frac{a_{m}}{2x_{m}^{c}}\right),
		\end{align}
		for $m=1, ...,\ell-1$. Then we have, for $m=1,\!...,\ell$, 
		\begin{align}
			\label{formula for xm*}
			x_{m}^{c}&=a_{m}f(\tilde{c}),
			\\\label{formula for ym*}
			y_{m}^{c}&=\frac{b_{m}}{2\frac{\sum_{i=1}^{\ell}a_{i}}{\sum_{i=1}^{\ell}b_{i}}\left(f(\tilde{c})-\frac{1}{2f(\tilde{c})}\right)}
		\end{align}
		and $c_{m}$ also satisfies \eqref{optimalc}.
	\end{lem}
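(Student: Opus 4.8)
The plan is to feed the first-order characterisation of $(x_m^c,y_m^c)$ from Proposition \ref{proposition optimal xmc existence and uniqueness} into the hypothesis \eqref{optimalc am Variante} and solve the resulting relations recursively in $m$. Throughout I abbreviate $A_m=\sum_{i=1}^m a_i$, $B_m=\sum_{i=1}^m b_i$, $A=A_\ell$, $B=B_\ell$, $g(t)=t-\tfrac{1}{2t}$, $\xi_m=x_m^c/a_m$, $P_m=(A_mB-AB_m)/B$ and $\phi=f(\tilde c)$. Four elementary facts are used repeatedly: $P_0=P_\ell=0$; $P_m-P_{m-1}=a_m-\tfrac AB b_m$, hence $a_m-P_m=\tfrac AB b_m-P_{m-1}$; the numerator in \eqref{optimalc am Variante} equals $A_mB-AB_m$, so \eqref{optimalc am Variante} reads $c_m=P_m g(\xi_m)$ for $m\le\ell-1$ and is consistent with the convention $c_m=P_mg(\xi_m)$ for $m\in\{0,\ell\}$ since $c_0=c_\ell=0$; and the algebraic identity
\[
\frac AB\,g(\phi)=\frac12\sqrt{2+\frac1{\phi^{2}}},\qquad\text{equivalently}\qquad g(\phi)^{2}=\tilde c\Bigl(1+\frac1{2\phi^{2}}\Bigr),
\]
which follows by squaring $\phi^{2}=\tfrac{1+\tilde c}{2}+\sqrt{\tfrac{\tilde c^{2}}{4}+\tilde c}$ twice (giving $\phi^{4}-(1+\tilde c)\phi^{2}+\tfrac{1-2\tilde c}{4}=0$, hence $\phi^{2}g(\phi)^{2}=\phi^{4}-\phi^{2}+\tfrac14=\tilde c(\phi^{2}+\tfrac12)$) and then using $\tilde c=B^{2}/(2A^{2})$ together with $g(\phi)>0$ from \eqref{f(ctilde)>1/sqrt(2)}.

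By Proposition \ref{proposition optimal xmc existence and uniqueness}, $y_m^c=b_m^{2}/(2\Delta_m)$ with $\Delta_m:=c_{m-1}+x_m^c-\tfrac{a_m^{2}}{2x_m^c}-c_m>0$, and $x_m^c$ is the largest real solution of \eqref{BEOx}; since $\Delta_m,b_m,\xi_m>0$, equation \eqref{BEOx} is equivalent to $\Delta_m=\tfrac{b_m}{2}\sqrt{2+1/\xi_m^{2}}$. Using $x_m^c-\tfrac{a_m^{2}}{2x_m^c}=a_m g(\xi_m)$, the hypothesis $c_m=P_mg(\xi_m)$, $c_{m-1}=P_{m-1}g(\xi_{m-1})$ and $a_m-P_m=\tfrac AB b_m-P_{m-1}$, one rewrites $\Delta_m$ and obtains, for every $m=1,\dots,\ell$, the master relation
\[
P_{m-1}\bigl(g(\xi_{m-1})-g(\xi_m)\bigr)+\frac AB\,b_m\,g(\xi_m)=\frac{b_m}{2}\sqrt{2+\frac1{\xi_m^{2}}}.
\]
For $m=1$ the term with $P_0$ vanishes, leaving $\tfrac AB g(\xi_1)=\tfrac12\sqrt{2+1/\xi_1^{2}}$; as the left side is then forced positive, squaring and inserting $\tilde c=B^{2}/(2A^{2})$ yields $(\xi_1^{2}-\tfrac12)^{2}=\tilde c(\xi_1^{2}+\tfrac12)$, a quadratic in $\xi_1^{2}$ whose only root with $\xi_1^{2}>\tfrac12$ is $\xi_1^{2}=\phi^{2}$; hence $\xi_1=\phi$ and $c_1=P_1g(\phi)$. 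One then proceeds by induction on $m$ with hypothesis $c_{m-1}=P_{m-1}g(\phi)$ (equivalently $\xi_{m-1}=\phi$): the master relation collapses to a single equation for $\xi_m$, and the algebraic identity above shows that $\xi_m=\phi$ solves it.

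The step requiring care — and the one I expect to be the main obstacle — is to argue that $\xi_m=\phi$ is \emph{the} value selected by Proposition \ref{proposition optimal xmc existence and uniqueness}, i.e.\ the one corresponding to the largest admissible root of \eqref{BEOx}: the coefficient $a_m-P_m$ in the master relation need not be positive, so the equation is not manifestly monotone in $\xi_m$ and could a priori admit a second positive root. I would settle this by working directly with \eqref{BEOx} for the candidate parameters $(c_{m-1},c_m)=(P_{m-1}g(\phi),P_mg(\phi))$: one checks that $x=a_m\phi$ solves \eqref{BEOx} (here $\Delta=\tfrac AB b_mg(\phi)$ and the identity is exactly what makes both sides of \eqref{BEOx} agree) and lies in the interior of $D^m_x(c_{m-1},c_m)$ — the associated $y$-value $b_mB/(2Ag(\phi))$ is positive and $x+y\le N$ — so by the uniqueness part of Proposition \ref{proposition optimal xmc existence and uniqueness} it must equal $x_m^c$; the monotonicity of $x_m^c$ in $c_m$ from Corollary \ref{corollary monotonicity xmc} then upgrades this to uniqueness of the fixed point $c_m=P_mg\bigl(x_m^c/a_m\bigr)$, so the given $c_m$ is indeed $P_mg(\phi)$.

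Once $\xi_m=\phi$ has been established for all $m$, the three conclusions are immediate: \eqref{formula for xm*} is $x_m^c=a_m\phi$; $\Delta_m=\tfrac AB b_mg(\phi)$ gives $y_m^c=b_m^{2}/(2\Delta_m)=b_m/\bigl(2\tfrac AB(f(\tilde c)-\tfrac1{2f(\tilde c)})\bigr)$, which is \eqref{formula for ym*}; and $c_m=P_mg(\phi)$ is precisely \eqref{optimalc} after expanding $P_m=(A_mB-AB_m)/B$. Apart from the selection argument inside the induction, the proof is bookkeeping together with the single algebraic identity recorded above.
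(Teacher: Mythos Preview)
Your approach is essentially the same algebra as the paper's, but with an explicit induction that the paper's write-up suppresses. The paper plugs \eqref{optimalc am Variante} into \eqref{BEOx} and factors $g(\xi_m)=x_m^c/a_m-a_m/(2x_m^c)$ out of $\Delta_m=c_{m-1}+a_mg(\xi_m)-c_m$ to obtain the quartic in $z=\xi_m$; since \eqref{optimalc am Variante} gives $c_{m-1}=P_{m-1}g(\xi_{m-1})$, that factoring tacitly uses $\xi_{m-1}=\xi_m$, which is precisely the conclusion. In the paper this is harmless because the lemma is only invoked inside the proof of Proposition~\ref{proposition optimal times in case of late expensive obstacles} \emph{after} \eqref{BEOc} (i.e.\ $\xi_m$ constant in $m$) has been derived; your induction, which first establishes $\xi_1=\phi$ from the $m=1$ relation (where $P_0=0$ removes any dependence on $\xi_0$) and then propagates, is the cleaner standalone argument. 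Your base case is in fact fully justified: the second positive root $\phi'$ of the quadratic in $\xi_1^2$ satisfies $\phi'^2\le 1/2$, hence $g(\phi')\le 0$, which is incompatible with the master relation, so $\xi_1=\phi$ is forced.

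The one genuine weak spot is your selection argument in the inductive step. Invoking Corollary~\ref{corollary monotonicity xmc} only shows that $c\mapsto P_m\,g\bigl(x_m^c(c_{m-1},c)/a_m\bigr)$ is monotone in $c$; a monotone self-map can have more than one fixed point, so this does not by itself pin down $c_m=P_mg(\phi)$. The paper's selection device is the right substitute: after the substitution, both candidate values of $\xi_m$ (coming from the two roots of the quartic) give putative solutions of \eqref{BEOx} for the \emph{same} pair $(c_{m-1},c_m)$, and if the smaller one lay in $D^m_x(c_{m-1},c_m)$ then, for $N$ large, so would the larger, contradicting the uniqueness clause of Proposition~\ref{proposition optimal xmc existence and uniqueness}. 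To make this work cleanly in your inductive framework you should, at step $m$, treat $c_{m-1}=P_{m-1}g(\phi)$ as fixed and view the master relation as an equation for $x_m$ with $c_m$ eliminated via \eqref{optimalc am Variante}; then check that the resulting equation has at most one root in $D^m_x$, which is what Proposition~\ref{proposition optimal xmc existence and uniqueness} supplies once you verify that any admissible root must satisfy $\xi_m^2>1/2$ (the same observation that settled $m=1$). With that correction your proof is complete and arguably tidier than the paper's.
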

	\begin{proof}
Plugging \eqref{optimalc am Variante} for $c_{m}$ into  \eqref{BEOx}, the first order condition for $x_{m}$, we get
		\begin{align}\label{candidate for optimal cm in BEO xmc}
			1+\frac{a_{m}^{2}}{2\left(x_{m}^{c}\right)^{2}}=\frac{2}{b_{m}^{2}}\Bigg[\frac{\big(\sum_{i=1}^{m-1}a_{i}\big)\big(\sum_{i=m}^{\ell}b_{i}\big)-\big(\sum_{i=1}^{m-1}b_{i}\big)\big(\sum_{i=m}^{\ell}a_{i}\big)}{\sum_{i=1}^{\ell}b_{i}}
			\\+a_{m}-\frac{\big(\sum_{i=1}^{m}a_{i}\big)\big(\sum_{i=m+1}^{\ell}b_{i}\big)-\big(\sum_{i=1}^{m}b_{i}\big)\big(\sum_{i=m+1}^{\ell}a_{i}\big)}{\sum_{i=1}^{\ell}b_{i}}\Bigg]^{2}\left(\frac{x_{m}^{c}}{a_{m}}-\frac{a_{m}}{2x_{m}^{c}}\right)^{2}.
		\end{align}
	 \eqref{candidate for optimal cm in BEO xmc} is equivalent to
		\begin{equation}\label{candidate for optimal cm in BEO xmc simplified}
			1+\frac{a_{m}^{2}}{2\left(x_{m}^{c}\right)^{2}}=\frac{2\left({\sum_{i=1}^{\ell}a_{i}}\right)^{2}\left(\frac{x_{m}^{c}}{a_{m}}-\frac{a_{m}}{2x_{m}^{c}}\right)^{2}}{\left({\sum_{i=1}^{\ell}b_{i}}\right)^{2}}.
		\end{equation}
	Setting $z=x_{m}^{c}/a_{m}$ and $\tilde{c}=(b_{1}+...+b_{\ell})^{2}/(2(a_{1}+...+a_{\ell})^{2})$, we write \eqref{candidate for optimal cm in BEO xmc simplified} as
		\begin{equation}
			1+\frac{1}{2z^{2}}=\frac{1}{\tilde{c}}\left(z-\frac{1}{2z}\right)^{2},
		\end{equation}
	which is equivalent to $0=z^{4}-(1+\tilde{c})z^{2}+1/4-\tilde{c}/2$. We have
		\begin{align}
			\frac{\left(x_{m}^{c}\right)^{2}}{a_{m}^{2}}=z^{2}
			\label{formular for z^2}
			&=\dfrac{1+\tilde{c}}{2}\text{ }^{+}_{-}\sqrt{\frac{\tilde{c}^{2}}{4}+\tilde{c}}.
		\end{align}
As $x_{m}>0$, we only need to consider the positive root $\sqrt{(1+\tilde{c})/2 -\sqrt{\tilde{c}^{2}/4+\tilde{c}}}$ . If $\sqrt{(1+\tilde{c})/2 -\sqrt{\tilde{c}^{2}/4+\tilde{c}}}$ is in $D^{m}_{x}(c_{m-1},c_{m})$, this  implies that  it is  in $D^{m}_{x}(c_{m-1},c_{m})$ too, for $N$ large enough. This would lead to a contradiction as there is only one solution of \eqref{BEOx} in $D^{m}_{x}(c_{m-1},c_{m})$ by Proposition \ref{proposition optimal xmc existence and uniqueness}. Hence, we have \eqref{formula for xm*}, i.e. $x_{m}^{c}=a_{m}z=a_{m}f(\tilde{c})$ with
		\begin{equation}
			f(\tilde{c})=\sqrt{\dfrac{1+\tilde{c}}{2}+\sqrt{\frac{\tilde{c}^{2}}{4}+\tilde{c}}}.
		\end{equation}
		Plugging \eqref{optimalc am Variante} into \eqref{formula for ym in minimization problem}, we get
		\begin{align}
			y_{m}^{c}
			=&\frac{b_{m}^{2}}{2}\times\Bigg[ \frac{\big(\sum_{i=1}^{m-1}a_{i}\big)\big(\sum_{i=m}^{\ell}b_{i}\big)-\big(\sum_{i=1}^{m-1}b_{i}\big)\big(\sum_{i=m}^{\ell}a_{i}\big)}{\sum_{i=1}^{\ell}b_{i}} +a_{m}
			\\\nonumber&-\frac{\big(\sum_{i=1}^{m}a_{i}\big)\big(\sum_{i=m+1}^{\ell}b_{i}\big)-\big(\sum_{i=1}^{m}b_{i}\big)\big(\sum_{i=m+1}^{\ell}a_{i}\big)}{\sum_{i=1}^{\ell}b_{i}} \Bigg]^{-1}\left(\frac{x_{m}^{c}}{a_{m}}-\frac{a_{m}}{2x_{m}^{c}}\right)^{-1}.
		\end{align}
		If we cancel all terms that arise with both signs, simplify the fraction and use \eqref{formula for xm*}, we get \eqref{formula for ym*}. That $c_{m}$ satisfies \eqref{optimalc}, follows by plugging \eqref{formula for xm*} into \eqref{optimalc am Variante}.
	\end{proof}

	\begin{proof}[Proof of Proposition \ref{proposition optimal times in case of late expensive obstacles} ]
		Assume that the interior of $D^{c}$ is not empty. (At the end of this proof, we will justify that this assumption does not cause a loss of generality.) Taking the derivative of \eqref{minimize whole time} with respect to $c_{m}$ gives the first order condition
		\begin{multline}
			\label{BEOcalt}\left(x_{m}^{c}\right)^{\prime}-\frac{b_{m}^{2}}{2\left(c_{m-1}+x_{m}^{c}-\frac{a_{m}^{2}}{2x_{m}^{c}}-c_{m}\right)^{2}}\left(	\left(x_{m}^{c}\right)^{\prime}+\frac{a_{m}^{2}}{2	\left(x_{m}^{c}\right)^{2}}	\left(x_{m}^{c}\right)^{\prime}-1\right)+\left(x_{m+1}^{c}\right)^{\prime}
			\\-\frac{b_{m+1}^{2}}{2\left(c_{m}+x_{m+1}^{c}-\frac{a_{m+1}^{2}}{2x_{m+1}^{c}}-c_{m+1}\right)^{2}}\left(1+	\left(x_{m+1}^{c}\right)^{\prime}+\frac{a_{m+1}^{2}}{2	\left(x_{m+1}^{c}\right)^{2}}	\left(x_{m+1}^{c}\right)^{\prime}\right)=0.
		\end{multline}
		By plugging \eqref{BEOx} into \eqref{BEOcalt}, we obtain
		\begin{align}
			0
			\label{BEOc transformed}
			=&\frac{1}{1+\frac{a_{m}^{2}}{2\left(x_{m}^{c}\right)^{2}}}-\frac{1}{1+\frac{a_{m+1}^{2}}{2\left(x_{m+1}^{c}\right)^{2}}}.
		\end{align}
		We note that \eqref{BEOc transformed} holds if and only if
		\begin{equation}\label{BEOc}
			\frac{a_{m}}{x_{m}^{c}}=\frac{a_{m+1}}{x_{m+1}^{c}}.
		\end{equation}
		The second derivative of \eqref{minimize whole time} with respect to $c_{m}$ equals
		\begin{equation}
			\frac{1}{\left(1+\frac{a_{m}^{2}}{2\left(x_{m}^{c}\right)^{2}}\right)^{2}}\frac{a_{m}^{2}}{\left(x_{m}^{c}\right)^{3}}\left(x_{m}^{c}\right)^{\prime}-\frac{1}{\left(1+\frac{a_{m+1}^{2}}{2\left(x_{m+1}^{c}\right)^{2}}\right)^{2}}\frac{a_{m+1}^{2}}{\left(x_{m+1}^{c}\right)^{3}}\left(x_{m+1}^{c}\right)^{\prime}
		\end{equation}
		and is non-negative, because $\left(x_{m}^{c}\right)^{\prime}\geq0$ and $\left(x_{m+1}^{c}\right)^{\prime}\leq0$ by Corollary \ref{corollary monotonicity xmc}. Hence, \eqref{minimize whole time} is weakly convex with respect to $c_{m}$ in the relevant domain $D^{c}$. We will see soon that \eqref{minimize whole time} has exactly one critical point in $D^{c}$. Because of this uniqueness, the weak convexity of \eqref{minimize whole time} and Lemmas \ref{lemma Dc convex} and \ref{lemma xm(C) continuous and simple}, the desired minimum has to be attained at this critical point.
		
		If we combine \eqref{BEOx} and \eqref{BEOc}, we see that the critical $(c_{1},\!...,c_{\ell-1})$ has to satisfy
		\begin{equation}
			\frac{b_{m}}{c_{m-1}+x_{m}^{c}-\frac{a_{m}^{2}}{2x_{m}^{c}}-c_{m}}=\frac{b_{m+1}}{c_{m}+x_{m+1}^{c}-\frac{a_{m+1}^{2}}{2x_{m+1}^{c}}-c_{m+1}}.
		\end{equation}
		We solve this for $c_{m}$ and use \eqref{BEOc} again to get the system of linear equations
		\begin{align}
			c_{m}=&\frac{b_{m+1}\left(c_{m-1}+x_{m}^{c}-\frac{a_{m}^{2}}{2x_{m}^{c}}\right)-b_{m}\left(x_{m+1}^{c}-\frac{a_{m+1}^{2}}{2x_{m+1}^{c}}-c_{m+1}\right)}{b_{m}+b_{m+1}}
			\\\label{linear system of equations again}
			=&\frac{b_{m+1}a_{m}-b_{m}a_{m+1}}{b_{m}+b_{m+1}}\left(\dfrac{x_{1}^{c}}{a_{1}}-\frac{a_{1}}{2x_{1}^{c}}\right)+\dfrac{b_{m+1}}{b_{m}+b_{m+1}}c_{m-1}+\dfrac{b_{m}}{b_{m}+b_{m+1}}c_{m+1}
		\end{align}
		for $m=1,\!...,\ell-1$. By Lemma \ref{lemma system of linear equations has unique solution}, the unique solution of \eqref{linear system of equations again} is given by \eqref{optimalc a1 Variante}. By \eqref{BEOc}, it also satisfies \eqref{optimalc am Variante}. Hence, our candidates for the optimal times are given in Lemma \ref{lemma compute xm* and ym* for cm*} with $c_{m}$ satisfying \eqref{optimalc}.
		
		It remains to show that the candidate given by \eqref{optimalc} is in $D^{c}$. First, $c_{m}\geq0$ follows by \eqref{assumptionaibi_intro} and \eqref{f(ctilde)>1/sqrt(2)}. We prove that $\tilde{D}$ is not empty by showing that the candidate satisfying \eqref{formula for xm*} and \eqref{formula for ym*} is an element of $\tilde{D}$. Condition \eqref{xmc domain1} holds by construction of $y_{m}^{c}$ via \eqref{formula for ym in minimization problem}. Condition \eqref{xmc domain4} follows by \eqref{f(ctilde)>1/sqrt(2)}. To get \eqref{xmc domain3} and in particular \eqref{domain3}, we need
		\begin{equation}\label{candidate optimal times<=1}
			\sum_{i=1}^{\ell}(x_{i}^{c}+y_{i}^{c})=\sum_{i=1}^{\ell}a_{i}f(\tilde{c})+\frac{\left(\sum_{i=1}^{\ell}b_{i}\right)^{2}}{2\sum_{i=1}^{\ell}a_{i}\left(f(\tilde{c})-\frac{1}{2f(\tilde{c})}\right)}\leq1.
		\end{equation}
		If we already knew that \eqref{candidate optimal times<=1} was true, we could argue as follows: The candidate given by \eqref{optimalc} is in $D^{c}$ with optimal times \eqref{formula for xm*} and \eqref{formula for ym*}. Furthermore, it is a critical point, because it satisfies \eqref{BEOc} by \eqref{formula for xm*}. As already mentioned, uniqueness of this critical point and weak convexity of \eqref{minimize whole time} imply that the argmin of \eqref{minimize whole time} over $D^{c}$ is given by \eqref{optimalc}.
		
		We show that \eqref{candidate optimal times<=1} is indeed true. Therefore, we define the domain $\hat{D}^{n}=\{(x_{1}, y_{1},\!...,x_{\ell}, y_{\ell})\in \mathbb{R}^{2\ell} : \eqref{domain1},\sum_{i=1}^{\ell}(x_{i}+y_{i})\leq n,\eqref{domain4}, \eqref{domain1} \text{ holds with equality for }m=\ell\}$. It is almost the same definition as for $\hat{D}$ but the total time is bounded from above by $nt$ instead of $t$. We choose $n>1$ so large that the candidate on the l.h.s of \eqref{candidate optimal times<=1} is smaller than $n$ and $\hat{D}^{n}$ is not empty. (For example, choose $n$ as the maximum of the l.h.s of \eqref{candidate optimal times<=1} and $\sum_{i=1}^{\ell}(2a_{i}+b_{i}^{2}/(3.5a_{i}))$. Then $\hat{D}^{n}$ is not empty because it contains $(2a_{1},b_{1}^{2}/(3.5a_{1}),\!...,2a_{\ell},b_{\ell}^{2}/(3.5a_{\ell}))$.)

		Then all entire results in Subsection \ref{Finding first particle above obstacles} carry over if we adapt the definition of $\widetilde{D}$ and ensure $N>n$ in \eqref{xmc domain3}. Hence, the minimum of $\sum_{i=1}^{\ell}(x_{i}+y_{i})$ over $\hat{D}^{n}$ is the l.h.s of \eqref{candidate optimal times<=1}. Since $\hat{D}$ is a subset of $\hat{D}^{n}$, we have
		\begin{multline}
			\min\left\{\sum_{i=1}^{\ell}(x_{i}+y_{i}):(x_{1}, y_{1},\!...,x_{\ell}, y_{\ell})\in\hat{D}\right\}\\\geq\min\left\{\sum_{i=1}^{\ell}(x_{i}+y_{i}):(x_{1}, y_{1},\!...,x_{\ell}, y_{\ell})\in\hat{D}^{n}\right\}.
		\end{multline}
		By \eqref{domain3}, this implies that $\hat{D}$ is not empty if and only if \eqref{candidate optimal times<=1} is true. Since we assumed non-emptiness of $\hat{D}$, the claim follows. (Looking at $\hat{D}^{n}$ also justifies that we can assume non emptiness of the interior of $D^{c}$ without a loss of generality.)
	\end{proof}
	\begin{cor}\label{corolarry D not empty in late expensive obstacle case}
		For any obstacle landscape $(a_{i},{b}_{i})_{i=1}^{\ell}$ satisfying assumption \eqref{assumptionaibi_intro}, the domain $\hat{D}$ is not empty if and only if 
		\begin{equation}
			\sum_{i=1}^{\ell}a_{i}f(\tilde{c})+\frac{\left(\sum_{i=1}^{\ell}b_{i}\right)^{2}}{2\sum_{i=1}^{\ell}a_{i}\left(f(\tilde{c})-\frac{1}{2f(\tilde{c})}\right)}\leq1.
		\end{equation}
	\end{cor}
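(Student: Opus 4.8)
The plan is to extract the statement from the reasoning already carried out in the proof of Proposition~\ref{proposition optimal times in case of late expensive obstacles}, where the auxiliary domain $\hat{D}^{n}$ (the same constraints as $\hat{D}$, but with the relaxed time budget $\sum_{i=1}^{\ell}(x_{i}+y_{i})\leq n$ for a large fixed $n>1$) was introduced precisely to decouple the identification of the optimal strategy from the a priori assumption that $\hat{D}$ is non-empty. Write
\[
S=\sum_{i=1}^{\ell}a_{i}f(\tilde{c})+\frac{\left(\sum_{i=1}^{\ell}b_{i}\right)^{2}}{2\sum_{i=1}^{\ell}a_{i}\left(f(\tilde{c})-\frac{1}{2f(\tilde{c})}\right)}
\]
for the left-hand side of the claimed inequality. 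First I would fix $n>1$ large enough that $n>S$ and $\hat{D}^{n}\neq\emptyset$ (exactly as in the proof of Proposition~\ref{proposition optimal times in case of late expensive obstacles}, e.g.\ $n=\max\{S,\sum_{i=1}^{\ell}(2a_{i}+b_{i}^{2}/(3.5a_{i}))\}$). For this $n$, all of Subsection~\ref{Finding first particle above obstacles} applies with $N>n$ in \eqref{xmc domain3}, so the point $(x_{1}^{c},y_{1}^{c},\dots,x_{\ell}^{c},y_{\ell}^{c})$ with coordinates \eqref{formula for xm*}--\eqref{formula for ym*} and associated $(c_{1},\dots,c_{\ell-1})$ given by \eqref{optimalc} is the minimizer of $\sum_{i=1}^{\ell}(x_{i}+y_{i})$ over $\hat{D}^{n}$, with minimal value exactly $S$.

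Next I would establish the two implications. For \emph{``only if''}: if $\hat{D}\neq\emptyset$, pick any $(x_{1},y_{1},\dots,x_{\ell},y_{\ell})\in\hat{D}$; by \eqref{domain3} it satisfies $\sum_{i=1}^{\ell}(x_{i}+y_{i})\leq 1$, and since $\hat{D}\subseteq\hat{D}^{n}$ it lies in $\hat{D}^{n}$, whence $S=\min_{\hat{D}^{n}}\sum_{i=1}^{\ell}(x_{i}+y_{i})\leq 1$. For \emph{``if''}: if $S\leq 1$, then the explicit minimizer $(x_{1}^{c},y_{1}^{c},\dots,x_{\ell}^{c},y_{\ell}^{c})$ from the previous step already satisfies $\sum_{i=1}^{\ell}(x_{i}^{c}+y_{i}^{c})=S\leq 1$, i.e.\ \eqref{domain3}; it satisfies \eqref{domain4} because $f(\tilde{c})>1/\sqrt{2}$ by \eqref{f(ctilde)>1/sqrt(2)} forces $x_{m}^{c}>0$ and $y_{m}^{c}>0$; and by \eqref{compatible cm} together with \eqref{optimalc} one has $\sum_{i=1}^{m}\big(x_{i}^{c}-a_{i}^{2}/(2x_{i}^{c})-b_{i}^{2}/(2y_{i}^{c})\big)=c_{m}\geq 0$ for all $m$ under Assumption~\eqref{assumptionaibi_intro} (using \eqref{f(ctilde)>1/sqrt(2)} for the sign of the prefactor), with $c_{\ell}=0$ because the numerator defining $c_{m}$ in \eqref{optimalc} vanishes at $m=\ell$. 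Thus \eqref{domain1} holds for every $m$ and with equality at $m=\ell$, so $(x_{1}^{c},y_{1}^{c},\dots,x_{\ell}^{c},y_{\ell}^{c})\in\hat{D}$ and $\hat{D}\neq\emptyset$.

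I do not expect a serious obstacle: the corollary is essentially a repackaging of the final sentence of the proof of Proposition~\ref{proposition optimal times in case of late expensive obstacles}. The one point requiring care is logical rather than computational: the identification of the minimizer and of the value $S$ must be performed over $\hat{D}^{n}$ (which is non-empty by the choice of $n$), never over $\hat{D}$ itself, so that no circularity with the condition ``$\hat{D}\neq\emptyset$'' is introduced; once $S$ is pinned down as the $\hat{D}^{n}$-minimum, both directions follow from $\hat{D}\subseteq\hat{D}^{n}$ and from the fact that $\hat{D}$ is carved out of $\hat{D}^{n}$ by the single extra inequality $\sum_{i=1}^{\ell}(x_{i}+y_{i})\leq 1$.
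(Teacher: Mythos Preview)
Your proposal is correct and follows essentially the same route as the paper: the corollary is not given a separate proof there, but is extracted directly from the final paragraph of the proof of Proposition~\ref{proposition optimal times in case of late expensive obstacles}, where the auxiliary domain $\hat{D}^{n}$ is used to identify $S$ as the minimum of $\sum_{i=1}^{\ell}(x_i+y_i)$ over $\hat{D}^{n}$ and it is observed that $\hat{D}\subseteq\hat{D}^{n}$ is cut out by the single constraint $\sum_{i=1}^{\ell}(x_i+y_i)\le 1$. Your write-up spells out both implications and the non-circularity point more explicitly than the paper does, but the underlying argument is the same.
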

	The main result of this subsection is the following.
	\begin{theorem}\label{theorem candidate in case of late expensive obstacles}
		If $\hat{D}$ is not empty and assumption \eqref{assumptionaibi_intro} holds, then the maximum of \eqref{optimize} over $\hat{D}$ equals $(h^{*})^{2}/2$ with
		\begin{align}
			h^{*}&=\sqrt{2}\left(1-\sum_{i=1}^{\ell}(x_{i}^{*}+y_{i}^{*})\right)
			\\&=\sqrt{2}\left(1-\sum_{i=1}^{\ell}a_{i}f(\tilde{c})-\frac{\left(\sum_{i=1}^{\ell}b_{i}\right)^{2}}{2\sum_{i=1}^{\ell}a_{i}\left(f(\tilde{c})-\frac{1}{2f(\tilde{c})}\right)}\right).
		\end{align}
	\end{theorem}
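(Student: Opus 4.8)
The plan is to reduce the constrained maximization of \eqref{optimize} over $\hat D$ to the minimization problem already solved in Proposition \ref{proposition optimal times in case of late expensive obstacles}, using the compatibility of domains from Lemma \ref{lemma domains are compatible} and the fibre-wise optimization of Proposition \ref{proposition optimal xmc existence and uniqueness}.

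First I would simplify the objective on $\hat D$. By definition of $\hat D$, constraint \eqref{domain1} holds with equality at $m=\ell$, i.e.\ $\sum_{i=1}^{\ell}\bigl(a_i^2/(2x_i)+b_i^2/(2y_i)\bigr)=\sum_{i=1}^{\ell}x_i$. Substituting this identity into the second factor of the right-hand side of \eqref{optimize} cancels the $x_i$-terms against the $a_i^2/(2x_i)+b_i^2/(2y_i)$-terms, so that on $\hat D$ the quantity \eqref{optimize} equals $\bigl(1-\sum_{i=1}^{\ell}(x_i+y_i)\bigr)^2$. Since \eqref{domain3} forces $\sum_{i=1}^{\ell}(x_i+y_i)\le 1$, we have $1-\sum_{i=1}^{\ell}(x_i+y_i)\ge 0$, so maximizing \eqref{optimize} over $\hat D$ is equivalent to minimizing $\sum_{i=1}^{\ell}(x_i+y_i)$ over $\hat D$.

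Next I would pass to the $c$-coordinates. Setting $c_m$ from $(x_1,y_1,\dots,x_\ell,y_\ell)$ via \eqref{compatible cm} (with $c_0=0$), Lemma \ref{lemma domains are compatible} identifies $\hat D$ with the tuples for which $(c_1,\dots,c_{\ell-1})\in D^c$ and $(x_m,y_m)\in D^m(c_{m-1},c_m)$ for every $m$. For fixed $(c_1,\dots,c_{\ell-1})\in D^c$ the minimization of $\sum_m(x_m+y_m)$ decouples over $m$, and Proposition \ref{proposition optimal xmc existence and uniqueness} supplies the unique fibre-wise minimizer $(x_m^c,y_m^c)$, whose total cost is precisely \eqref{minimize whole time}; conversely the tuple built from an arbitrary $c\in D^c$ lies in $\hat D$, since equality in \eqref{domain1} at $m=\ell$ holds because $c_\ell=0$ and \eqref{domain1} for $m<\ell$ holds because $c_m\ge 0$ by \eqref{cm domain1}. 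Hence the minimum of $\sum_{i=1}^{\ell}(x_i+y_i)$ over $\hat D$ coincides with the minimum of \eqref{minimize whole time} over $D^c$.

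Finally I would invoke Proposition \ref{proposition optimal times in case of late expensive obstacles}: since $\hat D\neq\emptyset$ and \eqref{assumptionaibi_intro} holds, the minimizer of \eqref{minimize whole time} over $D^c$ is $(c_1^*,\dots,c_{\ell-1}^*)$ from \eqref{optimalc}, with times $x_m^*=a_m f(\tilde c)$ and $y_m^*$ from \eqref{formula for xm*new}, so that $\min_{\hat D}\sum_{i=1}^{\ell}(x_i+y_i)=\sum_{i=1}^{\ell}(x_i^*+y_i^*)$; combining this with the first step gives that the maximum of \eqref{optimize} over $\hat D$ equals $\bigl(1-\sum_{i=1}^{\ell}(x_i^*+y_i^*)\bigr)^2=(h^*)^2/2$ with $h^*=\sqrt2\bigl(1-\sum_{i=1}^{\ell}(x_i^*+y_i^*)\bigr)$, which is the claimed expression once $\sum_{i=1}^{\ell}(x_i^*+y_i^*)$ is written out. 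I do not expect a real obstacle here: the analytic work (convexity of $D^c$, weak convexity of \eqref{minimize whole time}, existence and uniqueness of the critical point, and the explicit solution of the associated linear system) is contained in Lemmas \ref{lemma Dc convex}--\ref{lemma compute xm* and ym* for cm*} and Proposition \ref{proposition optimal times in case of late expensive obstacles}; the only points needing care are the bookkeeping in the domain identification — in particular that the fibre-wise minimizing tuple genuinely belongs to $\hat D$ rather than only to $\widetilde D$ — and matching the notation $x_m^*,y_m^*$ of the main theorem with $x_m^c,y_m^c$ evaluated at the optimal $(c_1^*,\dots,c_{\ell-1}^*)$.
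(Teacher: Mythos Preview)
Your proposal is correct and follows essentially the same route as the paper's own proof: reduce maximizing \eqref{optimize} over $\hat D$ to minimizing $\sum_{i=1}^{\ell}(x_i+y_i)$, pass to the $c$-parametrization via Lemma \ref{lemma domains are compatible} and the fibre-wise minimization of Proposition \ref{proposition optimal xmc existence and uniqueness}, and then invoke Proposition \ref{proposition optimal times in case of late expensive obstacles}. The paper's proof is terser but uses precisely these three ingredients; your added bookkeeping (why the objective collapses to $(1-\sum(x_i+y_i))^2$ on $\hat D$, and why the fibre-wise optimum lands back in $\hat D$) merely fills in what the paper leaves implicit.
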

	\begin{proof}
		The argmax of \eqref{optimize} over $\hat{D}$ equals the argmin of $\sum_{i=1}^{\ell}(x_{i}+y_{i})$ over $\hat{D}$. By Lemma \ref{lemma domains are compatible} and Proposition \ref{proposition optimal xmc existence and uniqueness}, this is equivalent to minimizing $\sum_{i=1}^{\ell}(x_{i}^{c}+y_{i}^{c})$ over $D^{c}$. By Proposition \ref{proposition optimal times in case of late expensive obstacles}, this argmin consists of the optimal times in Lemma \ref{lemma compute xm* and ym* for cm*}.
	\end{proof}
\subsection{Optimization over $\hat D$}\label{Optimal division into admissible blocks}
	In this subsection, we find the argmin of $\sum_{i=1}^{\ell}(x_{i}+y_{i})$, and hence the argmax of \eqref{optimize}, over $\hat{D}$.
	
	We introduce  the shorthand notation
	\begin{equation}
		\sum\nolimits_{m}^{n}=\frac{\sum_{i=m}^{n}b_{i}}{\sum_{i=m}^{n}a_{i}}.
	\end{equation}
We divide the obstacle landscape into blocks and  need the following definitions. 
	\begin{mydef}
		We call a sequence of natural numbers $0=u_{0}<u_{1}<...<u_{n_{1}}<u_{n_{1}+1}=\ell$ an admissible division into blocks if
		\begin{equation}\label{definition admissible division into blocks}
			\sum\nolimits_{u_{i}+1}^{m}<\sum\nolimits_{m+1}^{u_{i+1}}
		\end{equation} 
		for all $i=0,\!...,n_{1}$ and $m=u_{i}+1,\!...,u_{i+1}-1$.
	\end{mydef}
	\begin{rem}
		 \eqref{definition admissible division into blocks} says that the obstacle landscape $(a_{j},b_{j})_{j={u_{i}+1}}^{u_{i+1}}$ satisfies Assumption \eqref{assumptionaibi_intro}.
	\end{rem}
	\begin{mydef}
		For two admissible divisions into blocks, $0=u_{0}<u_{1}<...<u_{n_{1}}<u_{n_{1}+1}=\ell$ and $0=v_{0}<v_{1}<...<v_{n_{2}}<v_{n_{2}+1}=\ell$, we define their intersection 
		\begin{equation}
	\{w_{0},w_{1},\!...,w_{n_{3}+1}\}=\{u_{0},u_{1},\!...,u_{n_{1}+1}\} \cap \{v_{0},v_{1},\!...,v_{n_{2}+1}\},
		\end{equation}
		where $0=w_{0}<w_{1}<...<w_{n_{3}}<w_{n_{3}+1}=\ell$. 
	\end{mydef}
	For any natural numbers $0=u_{0}<u_{1}<...<u_{n_{1}}<u_{n_{1}+1}=\ell$, we define the domain
	\begin{equation}
		 D^{c}_{u_{1},\!...,u_{n_{1}}}=\left\{(c_{1},\!...,c_{\ell-1})\in D^{c}:c_{m}=0\text{ for all }m \in \{u_{1},\!...,u_{n_{1}}\}\right\}.
	\end{equation}
	The main result of this subsection is the following.
	\begin{theorem}\label{theorem candidate in the general case Dhat Variante}
		If $\hat{D}$ is not empty, the maximum of \eqref{optimize} over $\hat{D}$ equals $(h^{*})^{2}/2$ with
		\begin{align}\label{h* in the general case Dhat Variante}
			h^{*}=\sqrt{2}\left(1-\sum_{i=0}^{n^{*}}\left(\sum_{j=u_{i}^{*}+1}^{u_{i+1}^{*}}a_{j}f(\tilde{c}_{i})+\frac{\left(\sum_{j=u_{i}^{*}+1}^{u_{i+1}^{*}}b_{j}\right)^{2}}{2\sum_{j=u_{i}^{*}+1}^{u_{i+1}^{*}}a_{j}\left(f(\tilde{c}_{i})-\frac{1}{2f(\tilde{c}_{i})}\right)}\right)\right).
		\end{align}
	\end{theorem}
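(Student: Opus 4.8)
The plan is to follow the reduction already used in the proof of Theorem~\ref{theorem candidate in case of late expensive obstacles}. On $\hat{D}$ the functional \eqref{optimize} equals $\big(1-\sum_{i=1}^{\ell}(x_{i}+y_{i})\big)^{2}$ with $1-\sum_{i=1}^{\ell}(x_{i}+y_{i})\geq 0$, so its argmax over $\hat{D}$ is the argmin of $\sum_{i=1}^{\ell}(x_{i}+y_{i})$ over $\hat{D}$, and by Lemma~\ref{lemma domains are compatible} together with Proposition~\ref{proposition optimal xmc existence and uniqueness} this equals the minimum of $T(c):=\sum_{i=1}^{\ell}(x_{i}^{c}+y_{i}^{c})$ over $D^{c}$. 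As in the proof of Proposition~\ref{proposition optimal times in case of late expensive obstacles}, I would first replace the bound $1$ in \eqref{domain3} by a large constant $n$, which does not move the minimizer; $\hat{D}$ is then non-empty precisely when the resulting minimal value of $T$ is $\leq 1$. Thus it suffices to show $\min_{D^{c}}T=\sum_{i=1}^{\ell}(x_{i}^{*}+y_{i}^{*})$, which then yields \eqref{h* in the general case Dhat Variante}.

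For an admissible division into blocks $\mathbf{u}=(u_{1},\dots,u_{n_{1}})$ I would exploit that on $D^{c}_{u_{1},\dots,u_{n_{1}}}$ one has $c_{u_{i}}=0$, so $x_{m}^{c}$ and $y_{m}^{c}$ depend only on the $c$-variables inside the block $\{u_{i}+1,\dots,u_{i+1}\}$ containing $m$. Hence $T$ splits into a sum over blocks and, for $n$ so large that \eqref{domain3} stays slack, $\min_{D^{c}_{u_{1},\dots,u_{n_{1}}}}T$ equals the sum of the block minima. Each block $(a_{j},b_{j})_{j=u_{i}+1}^{u_{i+1}}$ satisfies Assumption~\eqref{assumptionaibi_intro}, so Theorem~\ref{theorem candidate in case of late expensive obstacles} and Corollary~\ref{corolarry D not empty in late expensive obstacle case}, applied to this block in its $n$-relaxed version, identify the $i$-th block minimum as the right-hand side of \eqref{time to get above all obstacles under assumptionaibi_intro} for the sub-landscape $(a_{j},b_{j})_{j=u_{i}+1}^{u_{i+1}}$. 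Writing $M(\mathbf{u})$ for the sum of these and using $D^{c}_{u_{1},\dots,u_{n_{1}}}\subseteq D^{c}$ gives $\min_{D^{c}}T\leq M(\mathbf{u})$ for every admissible $\mathbf{u}$; in particular, by \eqref{time to get above all obstacles_intro}, $\min_{D^{c}}T\leq M(u^{*})=\sum_{i=1}^{\ell}(x_{i}^{*}+y_{i}^{*})$.

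For the reverse inequality I would use that $D^{c}$ is compact — closed by the continuity of $c\mapsto x^{c}$ from Lemma~\ref{lemma xm(C) continuous and simple}, and bounded since $0\leq c_{m}\leq\sum_{i=1}^{\ell}x_{i}^{c}$ and the latter is bounded by \eqref{domain3} — so $T$ attains its minimum at some $c^{*}$. Let $\mathbf{u}$ be the division whose interior breakpoints are $\{m\in\{1,\dots,\ell-1\}:c_{m}^{*}=0\}$. Then $c^{*}\in D^{c}_{\mathbf{u}}$, and on each block of $\mathbf{u}$ the restriction of $c^{*}$ is an interior minimizer of the decoupled block functional, all inner coordinates being strictly positive and \eqref{domain3} slack. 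Running the argument of the proof of Proposition~\ref{proposition optimal times in case of late expensive obstacles} on the block — the first-order condition \eqref{BEOc transformed}, hence \eqref{BEOc}, hence the linear system of Lemma~\ref{lemma system of linear equations has unique solution}, hence Lemma~\ref{lemma compute xm* and ym* for cm*} — forces the block $c$-values to be those given by \eqref{optimalc} for that block; since these are strictly positive at every inner index, the block satisfies Assumption~\eqref{assumptionaibi_intro}, i.e.\ $\mathbf{u}$ is admissible. Hence $\min_{D^{c}}T=T(c^{*})=M(\mathbf{u})\geq\min_{\mathbf{u}'\text{ admissible}}M(\mathbf{u}')$, which with the previous step yields $\min_{D^{c}}T=\min_{\mathbf{u}\text{ admissible}}M(\mathbf{u})$.

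It remains to identify this minimum with $M(u^{*})$. Since $\mathbf{v}\subseteq\mathbf{w}$ as breakpoint sets implies $D^{c}_{\mathbf{w}}\subseteq D^{c}_{\mathbf{v}}$, the map $M$ is monotone under refinement, so its minimum over admissible divisions is attained at the coarsest admissible division. The key step — and the one I expect to be the main obstacle — is the combinatorial lemma that the intersection of two admissible divisions is again admissible; this should be proved from the mediant inequality for the ratios $\sum\nolimits_{m}^{n}$ (any $\sum\nolimits_{p+1}^{q}$ lies strictly between $\sum\nolimits_{p+1}^{m}$ and $\sum\nolimits_{m+1}^{q}$ when $p<m<q$). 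Granting it, the intersection of all admissible divisions is admissible and is the unique coarsest one, hence minimizes $M$; it then remains to check that Definition~\ref{defu} reproduces exactly this intersection: the indices $s_{1}<\dots<s_{n}$ of \eqref{defs} are precisely the places where Assumption~\eqref{assumptionaibi_intro} fails for the full landscape, so the breakpoints of every admissible division lie among them, and condition \eqref{next ui*} is exactly the rule that greedily merges as many consecutive blocks as possible while each merged sub-landscape stays within the scope of \eqref{assumptionaibi_intro}. Combining this with the first paragraph gives $\min_{D^{c}}T=M(u^{*})=\sum_{i=1}^{\ell}(x_{i}^{*}+y_{i}^{*})$ and hence the stated value of $h^{*}$; the remaining work (the block decoupling, the compactness, and the $\hat{D}^{n}$-style reduction) is routine given the earlier results.
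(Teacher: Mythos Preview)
Your approach is essentially the paper's: the same reduction of the $\hat D$-optimization to minimizing $T(c)=\sum_i(x_i^c+y_i^c)$ over $D^c$, the same mechanism for showing the minimizer has an admissible zero-set (the paper packages your compactness-plus-first-order-conditions argument as Lemma~\ref{lemma cuhat=0}), and the same combinatorial core---the intersection of admissible divisions is admissible (Lemma~\ref{lemma intersection of all admissible is optimal}) together with the identification of the coarsest one with $u^*$ (Lemmas~\ref{lemma optimal is a subset of indices for which assumption fails} and~\ref{lemma sufficent to check inequality for si}), proved via exactly the mediant-type inequalities you anticipate. One slip to fix: it is \emph{not} true that the breakpoints of every admissible division lie among the $s_i$---finer admissible divisions can have extra breakpoints; what you need (and what the paper proves as Lemma~\ref{lemma optimal is a subset of indices for which assumption fails}) is that $\{s_0,\dots,s_{n+1}\}$ is itself admissible, so the coarsest admissible division, being the intersection of all of them, has its breakpoints among the $s_i$.
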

	\begin{proof}
		The argmax of \eqref{optimize} over $\hat{D}$ equals the argmin of $\sum_{i=1}^{\ell}(x_{i}+y_{i})$ over $\hat{D}$. By Lemma \ref{lemma domains are compatible} and Proposition \ref{proposition optimal xmc existence and uniqueness}, this is equivalent to minimizing $\sum_{i=1}^{\ell}(x_{i}^{c}+y_{i}^{c})$ over $D^{c}$. By Theorem \ref{theorem candidate in case of late expensive obstacles}, we already know that \eqref{h* in the general case Dhat Variante} is optimal if assumption \eqref{assumptionaibi_intro} holds. 
		
		If assumption \eqref{assumptionaibi_intro} does not hold, we can apply the following lemma, which we will prove later.
		\begin{lem}\label{lemma cuhat=0}
			If \eqref{assumptionaibi_intro} does not hold, there is some $\hat{u}\in\{1,\!...,\ell-1\}$ such that the argmin of $\sum_{i=1}^{\ell}(x_{i}^{c}+y_{i}^{c})$ over $D^{c}$ equals the argmin of $\sum_{i=1}^{\ell}(x_{i}^{c}+y_{i}^{c})$ over $D^{c}_{\hat{u}}$.
		\end{lem} 
		 If $0<\hat{u}<\ell$ is not admissible, we can apply Lemma \ref{lemma cuhat=0} again. Consequently, there has to be some admissible division into blocks $0=\hat{u}_{0}<\hat{u}_{1}<...<\hat{u}_{\hat{n}}<\hat{u}_{\hat{n}+1}=\ell$ such that $c_{m}^{*}=0$ if and only if $m$ is in the set $\{\hat{u}_{1},\!...,\hat{u}_{\hat{n}}\}$. We want to show that $0=\hat{u}_{0}<\hat{u}_{1}<...<\hat{u}_{\hat{n}}<\hat{u}_{\hat{n}+1}=\ell$ is given by $0=u_{0}^{*}<u_{1}^{*}<...<u_{n^{*}}^{*}<u_{n^{*}+1}^{*}=\ell$, which was introduced in Definition  \ref{defu}. We need the following lemmas, which we will prove later.
		\begin{lem}\label{lemma intersection of all admissible is optimal}
			The intersection of two admissible divisions into blocks is admissible.	
		\end{lem}
		\begin{lem}\label{lemma intersection is better}
			Let $0=u_{0}<u_{1}<...<u_{n_{1}}<u_{n_{1}+1}=\ell$ and $0=v_{0}<v_{1}<...<v_{n_{2}}<v_{n_{2}+1}=\ell$ be two admissible divisions into blocks such that $D^{c}_{u_{1},\!...,u_{n_{1}}}$ and $D^{c}_{v_{1},\!...,v_{n_{2}}}$ are not empty. Let $0=w_{0}<w_{1}<...<w_{n_{3}}<w_{n_{3}+1}=\ell$ be their intersection. Assume that there are $\hat{i}\in\{1,\!...,n_{1}\}$ and $\hat{j}\in\{1,\!...,n_{2}\}$ such that $u_{\hat{i}}\notin \{w_{1},\!...,w_{n_{3}}\}$ and $v_{\hat{j}}\notin \{w_{1},\!...,w_{n_{3}}\}$. Then we have
			\begin{align}
				&\min\left\{\sum_{i=1}^{\ell}(x_{i}^{c}+y_{i}^{c}):(c_{1},\!...,c_{\ell-1})\in D^{c}_{u_{1},\!...,u_{n_{1}}}\cup D^{c}_{v_{1},\!...,v_{n_{2}}}\right\}\nonumber\\
				\label{equation intersection better v}
				&>\min\left\{\sum_{i=1}^{\ell}(x_{i}^{c}+y_{i}^{c}):(c_{1},\!...,c_{\ell-1})\in D^{c}_{w_{1},\!...,w_{n_{3}}}\right\},
			\end{align}
		\end{lem}
		\begin{lem}\label{lemma optimal is a subset of indices for which assumption fails} Recall the definition of $s_{0},s_{1},...,s_{n+1}$ in \eqref{defs}.
			The division into blocks $0=s_{0}<s_{1}<...<s_{n}<s_{n+1}=\ell$  is admissible.
		\end{lem}
		\begin{lem}\label{lemma sufficent to check inequality for si}
			If there exist some $\tilde{i}\in\{1,\!...,n\}$ and $\tilde{j}\in\{\tilde{i}+2,\!...,n\}$ such that
			\begin{equation}\label{< for indices for which assumption fails}
				\sum\nolimits_{s_{\tilde{i}}+1}^{s_{i}}<\sum\nolimits_{s_{i}+1}^{s_{\tilde{j}}}\text{ for all }i=\tilde{i}+1,\!...,\tilde{j}-1,
			\end{equation}
			then also
			\begin{equation}\label{< also for indices between them}
				\sum\nolimits_{s_{\tilde{i}}+1}^{m}<\sum\nolimits_{m+1}^{s_{\tilde{j}}}\text{ for all } m=s_{\tilde{i}}+1,\!...,s_{\tilde{j}}-1.
			\end{equation} 
		\end{lem}
		By Lemma \ref{lemma intersection of all admissible is optimal} and Lemma \ref{lemma intersection is better}, the optimal admissible division into blocks $0=\hat{u}_{0}<\hat{u}_{1}<...<\hat{u}_{\hat{n}}<\hat{u}_{\hat{n}+1}=\ell$ is unique and the intersection of all admissible divisions into blocks. Furthermore, $\{\hat{u}_{1},\!...,\hat{u}_{\hat{n}}\}$ is a subset of $\{s_{1},\!...,s_{n}\}$ because $0=s_{0}<s_{1}<...<s_{n}<s_{n+1}=\ell$ is admissible by Lemma \ref{lemma optimal is a subset of indices for which assumption fails}.
		
		We identify $0=\hat{u}_{0}<\hat{u}_{1}<...<\hat{u}_{\hat{n}}<\hat{u}_{\hat{n}+1}=\ell$ by induction. If we already know $\hat{u}_{0},\!...,\hat{u}_{\tilde{i}}$, we find $\hat{u}_{\tilde{i}+1}$ as follows. We pick $\tilde{j}=\inf\{j:s_{j}>\hat{u}_{\tilde{i}}\}$, the index of the next candidate. By Lemma \ref{lemma intersection of all admissible is optimal} and Lemma \ref{lemma optimal is a subset of indices for which assumption fails}, we know that $s_{\tilde{j}}\notin\{\hat{u}_{0},\!...,\hat{u}_{\hat{n}+1}\}$ if and only if there exist $i_{1}<\tilde{j}<i_{2}$ such that
		\begin{equation}\label{long condition}
			\sum\nolimits_{s_{i_{1}}+1}^{m}<\sum\nolimits_{m+1}^{s_{i_{2}}} \text{ for all } m=s_{i_{1}}+1,\!...,s_{i_{2}}-1.
		\end{equation}
		Furthermore, we know $s_{i_{1}}=\hat{u}_{\tilde{i}}$ because otherwise $\hat{u}_{\tilde{i}}\notin\{\hat{u}_{0},\!...,\hat{u}_{\hat{n}+1}\}$. If $i_{2}$ does not exist, we have $s_{\tilde{j}}=\hat{u}_{\tilde{i}+1}$. If it exists, we pick the largest one, $\hat{j}=\sup\{i_{2}:i_{2}\in\{\tilde{j}+1,\!...,n+1\} \text{ and }\eqref{long condition}\}$, and have $s_{\hat{j}}=\hat{u}_{\tilde{i}+1}$. (By definition of $\hat{j}$, we have $s_{\hat{j}}\in\{\hat{u}_{0},\!...,\hat{u}_{\hat{n}+1}\}$ and $s_{j}\notin\{\hat{u}_{0},\!...,\hat{u}_{\hat{n}+1}\}$ for all $j=\tilde{j},\!...,\hat{j}-1$.) We iterate this until $s_{\tilde{j}}=\ell$ or $s_{\hat{j}}=\ell$. By Lemma \ref{lemma sufficent to check inequality for si}, it is sufficient to check \eqref{long condition} only for $m\in \{s_{1},\!...,s_{n}\}$. Hence, the optimal division into blocks $0=\hat{u}_{0}<\hat{u}_{1}<...<\hat{u}_{\hat{n}}<\hat{u}_{\hat{n}+1}=\ell$ is given by $0=u_{0}^{*}<u_{1}^{*}<...<u_{n^{*}}^{*}<u_{n^{*}+1}^{*}=\ell$.
		
		By Theorem \ref{theorem candidate in case of late expensive obstacles} and the admissibility of $0=u_{0}^{*}<u_{1}^{*}<...<u_{n^{*}}^{*}<u_{n^{*}+1}^{*}=\ell$, the optimal times are
		\begin{align}
			&x_{m}^{*}=a_{m}f(\tilde{c}_{i}),\quad\mbox{and}\quad
			y_{m}^{*}=\frac{b_{m}}{2\frac{\sum_{j=u_{i}^{*}+1}^{u_{i+1}^{*}}a_{j}}{\sum_{j=u_{i}^{*}+1}^{u_{i+1}^{*}}b_{j}}\left(f(\tilde{c}_{i})-\frac{1}{2f(\tilde{c}_{i})}\right)}
			\\&\text{with } f(\tilde{c}_{i})=\sqrt{\frac{1+\tilde{c}_{i}}{2}+\sqrt{\frac{\tilde{c}_{i}^{2}}{4}+\tilde{c}_{i}}}
			\quad\text{and }\quad\tilde{c}_{i}=\frac{\left(\sum_{j=u_{i}^{*}+1}^{u_{i+1}^{*}}b_{j}\right)^{2}}{2\left(\sum_{j=u_{i}^{*}+1}^{u_{i+1}^{*}}a_{j}\right)^{2}}
		\end{align}
		for all $m=u_{i}^{*}+1,\!...,u_{i+1}^{*}$ and $i=0,\!...,n^{*}$. Hence, the maximum of \eqref{optimize} over $\hat{D}$ equals $(h^{*})^{2}/2$ where $h^{*}=\sqrt{2}(1-\sum_{i=1}^{\ell}(x_{i}^{*}+y_{i}^{*}))$ is equal to \eqref{h* in the general case Dhat Variante}.
	\end{proof}	
	\begin{cor}\label{corolarry D not empty Dhat Variante}
		For any obstacle landscape $(a_{i},{b}_{i})_{i=1}^{\ell}$, the domain $\hat{D}$ is not empty if and only if 
		\begin{equation}
			\sum_{i=0}^{n^{*}}\left(\sum_{j=u_{i}^{*}+1}^{u_{i+1}^{*}}a_{j}f(\tilde{c}_{i})+\frac{\left(\sum_{j=u_{i}^{*}+1}^{u_{i+1}^{*}}b_{j}\right)^{2}}{2\sum_{j=u_{i}^{*}+1}^{u_{i+1}^{*}}a_{j}\left(f(\tilde{c}_{i})-\frac{1}{2f(\tilde{c}_{i})}\right)}\right)\leq1.
		\end{equation}
	\end{cor}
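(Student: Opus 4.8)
The plan is to deduce the corollary from Theorem~\ref{theorem candidate in the general case Dhat Variante} by the same relaxation device already used in the proof of Proposition~\ref{proposition optimal times in case of late expensive obstacles}. Recall the relaxed domain $\hat{D}^{n}$ from that proof, obtained by replacing the total--time constraint \eqref{domain3} by $\sum_{i=1}^{\ell}(x_{i}+y_{i})\le n$ while keeping \eqref{domain1}, \eqref{domain4}, and equality in \eqref{domain1} for $m=\ell$. For every $n\ge 1$ one has the set identity
\begin{equation*}
\hat{D}=\Bigl\{(x_{1},y_{1},\dots,x_{\ell},y_{\ell})\in\hat{D}^{n}:\ \textstyle\sum_{i=1}^{\ell}(x_{i}+y_{i})\le 1\Bigr\}.
\end{equation*}
First I would fix $n>1$ large enough that $\hat{D}^{n}\neq\emptyset$ and that the left-hand side of the claimed inequality is strictly less than $n$; an explicit admissible choice is given in the proof of Proposition~\ref{proposition optimal times in case of late expensive obstacles} (the maximum of that left-hand side and $\sum_{i=1}^{\ell}(2a_{i}+b_{i}^{2}/(3.5a_{i}))$).

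Next I would observe that every construction and result of Section~\ref{subsection optimization problem} transfers verbatim when $\hat{D}$ is replaced by $\hat{D}^{n}$, the bound $1$ on $\sum(x_{i}+y_{i})$ in $D^{c}$, $\widetilde{D}$ and the auxiliary domains is loosened to $n$, and $N>n$ is imposed in \eqref{xmc domain3} --- this is precisely the remark already made inside the proof of Proposition~\ref{proposition optimal times in case of late expensive obstacles}. Consequently the analogue of Theorem~\ref{theorem candidate in the general case Dhat Variante} applies to the now non-empty domain $\hat{D}^{n}$ and yields that
\begin{equation*}
\min\Bigl\{\textstyle\sum_{i=1}^{\ell}(x_{i}+y_{i}):(x_{1},y_{1},\dots,x_{\ell},y_{\ell})\in\hat{D}^{n}\Bigr\}=\sum_{i=0}^{n^{*}}\left(\sum_{j=u_{i}^{*}+1}^{u_{i+1}^{*}}a_{j}f(\tilde{c}_{i})+\frac{\bigl(\sum_{j=u_{i}^{*}+1}^{u_{i+1}^{*}}b_{j}\bigr)^{2}}{2\sum_{j=u_{i}^{*}+1}^{u_{i+1}^{*}}a_{j}\bigl(f(\tilde{c}_{i})-\frac{1}{2f(\tilde{c}_{i})}\bigr)}\right),
\end{equation*}
the right-hand side being $<n$, so that the constraint $\sum(x_{i}+y_{i})\le n$ is inactive at the minimizer, and the minimum being attained at a unique point (by the uniqueness statements of Proposition~\ref{proposition optimal xmc existence and uniqueness} together with the convexity arguments of Proposition~\ref{proposition optimal times in case of late expensive obstacles} and Theorem~\ref{theorem candidate in the general case Dhat Variante}).

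Finally I would conclude by writing $S$ for the displayed minimum value. If $S\le 1$, the minimizer over $\hat{D}^{n}$ has $\sum_{i=1}^{\ell}(x_{i}+y_{i})=S\le 1$, hence by the set identity it lies in $\hat{D}$, so $\hat{D}\neq\emptyset$. Conversely, if $\hat{D}\neq\emptyset$, any point of $\hat{D}\subseteq\hat{D}^{n}$ satisfies $S\le\sum_{i=1}^{\ell}(x_{i}+y_{i})\le 1$, so $S\le 1$. Therefore $\hat{D}\neq\emptyset$ if and only if $S\le 1$, which is exactly the assertion. The only delicate point --- and the reason one must pass through $\hat{D}^{n}$ rather than quote Theorem~\ref{theorem candidate in the general case Dhat Variante} directly (it presupposes $\hat{D}\neq\emptyset$) --- is verifying that the minimizer over $\hat{D}^{n}$ does not sit on the artificial boundary $\sum(x_{i}+y_{i})=n$; this is handled by the boundary analysis of Proposition~\ref{proposition optimal xmc existence and uniqueness}, which is where the choice $N>n$ is used.
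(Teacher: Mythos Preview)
Your proposal is correct and follows exactly the route the paper intends: the paper's proof consists of the single sentence ``The claim follows analogously to Corollary~\ref{corolarry D not empty in late expensive obstacle case} by looking at $\hat{D}^{n}$,'' and you have spelled out precisely what that entails. In particular, your observation that Theorem~\ref{theorem candidate in the general case Dhat Variante} cannot be quoted directly (since it assumes $\hat{D}\neq\emptyset$) and must instead be rerun over the relaxed domain $\hat{D}^{n}$ is the key point, and your handling of the artificial boundary via the choice $N>n$ matches the paper's own argument in Proposition~\ref{proposition optimal times in case of late expensive obstacles}.
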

	\begin{proof}
		The claim follows analogously to Corollary \ref{corolarry D not empty in late expensive obstacle case} by looking at $\hat{D}^{n}$.
	\end{proof}
	It remains to prove the five lemmas that we used in the proof of Theorem \ref{theorem candidate in the general case}. 
	\begin{proof}[Proof of Lemma \ref{lemma cuhat=0}]
		First, we show that the argmin has to be in the boundary of $D^{c}$ if \eqref{assumptionaibi_intro} does not hold. Recall the definitions of
		$D^{c}=\{(c_{1},\!...,c_{\ell-1})\in\mathbb{R}^{\ell-1}:\eqref{cm domain1},\eqref{cm domain2}\}$ 
	and  $\widetilde{D}$, which is defined in \eqref{deftildeD}.
	 In the proof of Proposition \ref{proposition optimal times in case of late expensive obstacles}, we saw that a critical point would have to satisfy \eqref{linear system of equations again}. By Lemma \ref{lemma system of linear equations has unique solution}, the unique solution of \eqref{linear system of equations again} is given by \eqref{optimalc}. But \eqref{optimalc} is not strictly positive if assumption \eqref{assumptionaibi_intro} does not hold. Hence, by condition \eqref{cm domain1}, there is no critical point in the interior of $D^{c}$ and the argmin has to be in the boundary.
		
		Next, we show that there is some $\hat{u}\in\{1,\!...,\ell-1\}$ such that $c_{\hat{u}}^{*}=0$. Assume w.l.o.g. that the minimum of $\sum_{i=1}^{\ell}(x_{i}^{c}+y_{i}^{c})$ over $D^{c}$ is strictly smaller than $1$.\footnote{If the minimum of $\sum_{i=1}^{\ell}(x_{i}^{c}+y_{i}^{c})$ over $D^{c}$ is equal to $1$, one looks at $\hat{D}^{n}$ instead and still gets the existence of $\hat{u}$.} Let $(\hat{c}_{1},\!...,\hat{c}_{\ell-1})$ be in the boundary of $D^{c}$ such that $\hat{c}_{m}>0$ for $m=1,\!...,\ell-1$. Suppose that $(\hat{c}_{1},\!...,\hat{c}_{\ell-1})$ is the argmin of $\sum_{i=1}^{\ell}(x_{i}^{c}+y_{i}^{c})$ over $D^{c}$. For $m=1,...,\ell$, let $(x_{m}^{\hat{c}},y_{m}^{\hat{c}})$ be the argmin of $x_{m}+y_{m}$ over $D^{m}(\hat{c}_{m-1},\hat{c}_{m})$.
		
		Note that $D^{m}_{x}(\hat{c}_{m-1},\hat{c}_{m})$ is a closed interval by Lemma \ref{lemma Dmx(C) closed interval}. By Proposition \ref{proposition optimal xmc existence and uniqueness}, there is some $\epsilon>0$ such that $\{x_{m}:|x_{m}-x_{m}^{\hat{c}}|<4\epsilon \}\subset D^{m}_{x}(\hat{c}_{m-1},\hat{c}_{m})$ for $m=1,\!...,\ell$. Since the minimum of $\sum_{i=1}^{\ell}(x_{i}^{c}+y_{i}^{c})$ over $D^{c}$ is strictly smaller than $1$, we can choose $\epsilon>0$ so small that also $\sum_{m=1}^{\ell}(x_{m}+y_{m})<1$ for all $(x_{m},y_{m})\in D^{m}(\hat{c}_{m-1},\hat{c}_{m})$ with $x_{m}\in\{x_{m}:|x_{m}-x_{m}^{\hat{c}}|<\epsilon \}$. By Lemma \ref{lemma Dmx(C) continuous in C} and its analogue for $c_{m-1}$, there exists $\delta>0$ such that for all $(c_{1},\!...,c_{\ell-1})$ satisfying $|c_{m}-\hat{c}_{m}|<\delta$ for $m=1,\!...,\ell-1$, we have $\{x_{m}:|x_{m}-x_{m}^{\hat{c}}|<\epsilon \}\subset D^{m}_{x}(c_{m-1},c_{m})$. Since $\hat{c}_{m}>0$, we can choose $\delta>0$ so small that also $c_{m}>0$ for all $|c_{m}-\hat{c}_{m}|<\delta$. This is a contradiction because we supposed that $(\hat{c}_{1},\!...,\hat{c}_{\ell-1})$ is in the boundary of $D^{c}$. Hence, $(\hat{c}_{1},\!...,\hat{c}_{\ell-1})$ can not be optimal and there has to be some $\hat{u}$ such that $c_{\hat{u}}^{*}=0$.
	\end{proof}
	For the other four lemmas, we use the following simple implications. For any $\hat{a}_{i}, \hat{b}_{i}>0$, we have
	\begin{align}
		\label{A}&\left[\frac{\hat{b}_{1}}{\hat{a}_{1}}<\frac{\hat{b}_{2}+\hat{b}_{3}}{\hat{a}_{2}+\hat{a}_{3}} \text{ and } \frac{\hat{b}_{2}}{\hat{a}_{2}}<\frac{\hat{b}_{3}}{\hat{a}_{3}}\right]
		\Rightarrow \frac{\hat{b}_{1}+\hat{b}_{2}}{\hat{a}_{1}+\hat{a}_{2}}<\frac{\hat{b}_{3}}{\hat{a}_{3}},
		\\\label{B}&\left[\frac{\hat{b}_{1}+\hat{b}_{2}}{\hat{a}_{1}+\hat{a}_{2}}<\frac{\hat{b}_{3}}{\hat{a}_{3}} \text{ and } \frac{\hat{b}_{1}}{\hat{a}_{1}}<\frac{\hat{b}_{2}}{\hat{a}_{2}}\right]
		\Rightarrow \frac{\hat{b}_{1}}{\hat{a}_{1}}<\frac{\hat{b}_{2}+\hat{b}_{3}}{\hat{a}_{2}+\hat{a}_{3}},
		\\\label{C and D}&\left[\frac{\hat{b}_{1}}{\hat{a}_{1}}<\frac{\hat{b}_{2}}{\hat{a}_{2}}\text{ and } \frac{\hat{b}_{2}}{\hat{a}_{2}}<\frac{\hat{b}_{3}}{\hat{a}_{3}}\right]\Rightarrow \left[\frac{\hat{b}_{1}}{\hat{a}_{1}}<\frac{\hat{b}_{2}+\hat{b}_{3}}{\hat{a}_{2}+\hat{a}_{3}}\text{ and } \frac{\hat{b}_{1}+\hat{b}_{2}}{\hat{a}_{1}+\hat{a}_{2}}<\frac{\hat{b}_{3}}{\hat{a}_{3}}\right],
		\\\label{E}&\left[\frac{\hat{b}_{1}+\hat{b}_{2}}{\hat{a}_{1}+\hat{a}_{2}}<\frac{\hat{b}_{3}}{\hat{a}_{3}}\text{ and }\left[\frac{\hat{b}_{1}}{\hat{a}_{1}}\geq\frac{\hat{b}_{2}}{\hat{a}_{2}} \text{ or }\frac{\hat{b}_{1}}{\hat{a}_{1}}\geq\frac{\hat{b}_{2}+\hat{b}_{3}}{\hat{a}_{2}+\hat{_{3}}}\right]\right]\Rightarrow \frac{\hat{b}_{2}}{\hat{a}_{2}}<\frac{\hat{b}_{3}}{\hat{a}_{3}},
		\\\label{F}&\left[\frac{\hat{b}_{1}}{\hat{a}_{1}}<\frac{\hat{b}_{2}+\hat{b}_{3}}{\hat{a}_{2}+\hat{a}_{3}}\text{ and }\left[\frac{\hat{b}_{2}}{\hat{a}_{2}}\geq\frac{\hat{b}_{3}}{\hat{a}_{3}}\text{ or }\frac{\hat{b}_{1}+\hat{b}_{2}}{\hat{a}_{1}+\hat{a}_{2}}\geq\frac{\hat{b}_{3}}{\hat{a}_{3}}\right]\right]\Rightarrow\frac{\hat{b}_{1}}{\hat{a}_{1}}<\frac{\hat{b}_{2}}{\hat{a}_{2}},
	\end{align}
	as well as
	\begin{align}
		\label{G}&\left[\frac{\hat{b}_{1}}{\hat{a}_{1}}\geq \frac{\hat{b}_{2}+\hat{b}_{3}}{\hat{a}_{2}+\hat{a}_{3}}\text{ and }\frac{\hat{b}_{2}}{\hat{a}_{2}}\geq\frac{\hat{b}_{3}}{\hat{a}_{3}}\right]\Rightarrow\frac{\hat{b}_{1}+\hat{b}_{2}}{\hat{a}_{1}+\hat{a}_{2}}\geq \frac{\hat{b}_{3}}{\hat{a}_{3}},
		\\\label{H}&\left[\frac{\hat{b}_{1}+\hat{b}_{2}}{\hat{a}_{1}+\hat{a}_{2}}\geq\frac{\hat{b}_{3}}{\hat{a}_{3}}\text{ and }\frac{\hat{b}_{1}}{\hat{a}_{1}}\geq\frac{\hat{b}_{2}}{\hat{a}_{2}}\right]\Rightarrow\frac{\hat{b}_{1}}{\hat{a}_{1}}\geq \frac{\hat{b}_{2}+\hat{b}_{3}}{\hat{a}_{2}+\hat{a}_{3}}, 
		\\\label{I and J} &\left[\frac{\hat{b}_{1}}{\hat{a}_{1}}\geq\frac{\hat{b}_{2}}{\hat{a}_{2}}\text{ and }\frac{\hat{b}_{2}}{\hat{a}_{2}}\geq\frac{\hat{b}_{3}}{\hat{a}_{3}}\right]\Rightarrow\left[\frac{\hat{b}_{1}+\hat{b}_{2}}{\hat{a}_{1}+\hat{a}_{2}}\geq \frac{\hat{b}_{3}}{\hat{a}_{3}}\text{ and }\frac{\hat{b}_{1}}{\hat{a}_{1}}\geq \frac{\hat{b}_{2}+\hat{b}_{3}}{\hat{a}_{2}+\hat{a}_{3}} \right],
		\\\label{K}&\left[\frac{\hat{b}_{1}}{\hat{a}_{1}}\geq \frac{\hat{b}_{2}+\hat{b}_{3}}{\hat{a}_{2}+\hat{a}_{3}} \text{ and }\left[\frac{\hat{b}_{2}}{\hat{a}_{2}}<\frac{\hat{b}_{3}}{\hat{a}_{3}}\text{ or } \frac{\hat{b}_{1}+\hat{b}_{2}}{\hat{a}_{1}+\hat{a}_{2}}< \frac{\hat{b}_{3}}{\hat{a}_{3}}\right]\right]\Rightarrow\frac{\hat{b}_{1}}{\hat{a}_{1}}\geq \frac{\hat{b}_{2}}{\hat{a}_{2}}, 
		\\\label{L}&\left[\frac{\hat{b}_{1}+\hat{b}_{2}}{\hat{a}_{1}+\hat{a}_{2}}\geq \frac{\hat{b}_{3}}{\hat{a}_{3}}\text{ and }\left[\frac{\hat{b}_{1}}{\hat{a}_{1}}< \frac{\hat{b}_{2}}{\hat{a}_{2}}\text{ or } \frac{\hat{b}_{1}}{\hat{a}_{1}}< \frac{\hat{b}_{2}+\hat{b}_{3}}{\hat{a}_{2}+\hat{a}_{3}}\right]\right]\Rightarrow\frac{\hat{b}_{2}}{\hat{a}_{2}}\geq \frac{\hat{b}_{3}}{\hat{a}_{3}}.
	\end{align}
	Looking at equations like \eqref{assumptionaibi_intro}, these implications allow us to "shift" the inequality symbols and to "add" or "remove" terms on one side.
	\begin{proof}[Proof of Lemma \ref{lemma intersection of all admissible is optimal}]
		Let $0=u_{0}<u_{1}<...<u_{n_{1}}<u_{n_{1}+1}=\ell$ and $0=v_{0}<v_{1}<...<v_{n_{2}}<v_{n_{2}+1}=\ell$ be admissible. First, assume $\{u_{0},u_{1},\!...,u_{n_{1}+1}\} \cap \{v_{0},v_{1},\!...,v_{n_{2}+1}\}=\{0,\ell\}$. We show 
		\begin{equation}\label{from 1 to ell}
			\sum\nolimits_{1}^{m}<\sum\nolimits_{m+1}^{\ell} \text{ for all } m=1,\!...,\ell-1.
		\end{equation}
		We pick w.l.o.g. $j\geq1$ such that $u_{j}<v_{1}<u_{j+1}$ and show 
		\begin{equation}\label{from 1 to uj+1}
			\sum\nolimits_{1}^{m}<\sum\nolimits_{m+1}^{u_{j+1}} \text{ for all } m=1,\!...,u_{j+1}-1.
		\end{equation} 
		Since $0=u_{0}<u_{1}<...<u_{n_{1}}<u_{n_{1}+1}=\ell$ and $0=v_{0}<v_{1}<...<v_{n_{2}}<v_{n_{2}+1}=\ell$ are admissible, we have
		\begin{align}\label{from uj to uj+1}
			\sum\nolimits_{u_{j}+1}^{m}<\sum\nolimits_{m+1}^{u_{j+1}} &\text{ for all } m=u_{j}+1,\!...,u_{j+1}-1,
			\\\label{from 1 to v1}\sum\nolimits_{1}^{m}<\sum\nolimits_{m+1}^{v_{1}} &\text{ for all } m=1,\!...,v_{1}-1.
		\end{align}
		By \eqref{from uj to uj+1} for $m=v_{1}$, \eqref{from 1 to v1} for $m=u_{j}$ and implication \eqref{C and D}, we have \eqref{from 1 to uj+1} for $m=u_{j}$. From this we get \eqref{from 1 to uj+1} for $m=u_{j},\!...,u_{j+1}-1$ by \eqref{from uj to uj+1} and implication \eqref{A}. In particular, \eqref{from 1 to uj+1} holds for $m=v_{1}$, which implies \eqref{from 1 to uj+1} for $m=1,\!...,v_{1}$ by \eqref{from 1 to v1} and implication \eqref{B}. 
		
		Next, we pick $i$ such that $v_{i}<u_{j+1}<v_{i+1}$, if it exists, and get analogously
		\begin{equation}
			\sum\nolimits_{1}^{m}<\sum\nolimits_{m+1}^{v_{i+1}} \text{ for all } m=1, ...,v_{i+1}-1.
		\end{equation}
		Iterating this, we finally have \eqref{from 1 to ell}. If $\{u_{0},u_{1},\!...,u_{n_{1}+1}\} \cap \{v_{0},v_{1},\!...,v_{n_{2}+1}\}=\{0,\ell_{1},\!...,\ell_{\tilde{n}},\ell\}$ with $0<\ell_{1}<...<\ell_{\tilde{n}}<\ell$, one can apply the procedure to each of the landscapes $(a_{i},b_{i})_{i=1}^{\ell_{1}}$, $(a_{i},b_{i})_{i=\ell_{1}+1}^{\ell_{2}}$,...,$(a_{i},b_{i})_{i=\ell_{\tilde{n}}+1}^{\ell}$.
	\end{proof}
	\begin{proof}[Proof of Lemma \ref{lemma intersection is better}]
		Since $D^{c}_{u}$ and $D^{c}_{v}$ are subsets of $D^{c}_{w}$, we almost have \eqref{equation intersection better v}, but possibly with equality.
		
		By definition of $D^{c}$, finding the optimal $(c_{1},\!...,c_{\ell-1})\in D^{c}_{w}$ is equivalent to finding all optimal times for a BBM among obstacles $(a_{j},b_{j})_{j={w_{i}+1}}^{w_{i+1}}$. By Lemma \ref{lemma intersection of all admissible is optimal}, the obstacle landscape $(a_{j},b_{j})_{j={w_{i}+1}}^{w_{i+1}}$ satisfies the assumption of late expensive obstacles. Hence, by Proposition \ref{proposition optimal times in case of late expensive obstacles}, the unique optimal $(c_{1},\!...,c_{\ell-1})\in D^{c}_{w}$ has to satisfy $c_{m}>0$ for all $m=w_{i}+1,\!...,w_{i+1}-1$ and $i=0,\!...,n_{3}$. By existence of $\hat{i}$ and $\hat{j}$, we get the strict inequality \eqref{equation intersection better v}.
	\end{proof}
	\begin{proof}[Proof of Lemma \ref{lemma optimal is a subset of indices for which assumption fails}]
		We have to show
		\begin{equation}\label{from (si)+1 to si+1}
			\sum\nolimits_{s_{i}+1}^{m}<\sum\nolimits_{m+1}^{s_{i+1}}\text{ for all }i=0,\!...,n\text{ and }m=s_{i}+1,\!...,s_{i+1}-1.
		\end{equation}
		By definition of $\{s_{1},\!...,s_{n}\}$, we have
		\begin{equation}\label{definition s1,...,sn}
			\sum\nolimits_{1}^{m}<\sum\nolimits_{m+1}^{\ell} \text{ if and only if } m\notin \{s_{1},\!...,s_{n}\}.
		\end{equation}
		Now \eqref{definition s1,...,sn} and implication \eqref{E} respectively \eqref{F} imply
		\begin{align}
			\label{first cut (si)+1 left}\sum\nolimits_{s_{i}+1}^{m}<\sum\nolimits_{m+1}^{\ell}&\text{ for all }m=s_{i}+1,\!...,s_{i+1}-1,
			\\\label{first cut right si+1}\sum\nolimits_{1}^{m}<\sum\nolimits_{m+1}^{s_{i+1}} &\text{ for all } m=s_{i}+1,\!...,s_{i+1}-1.
		\end{align}
		Using \eqref{first cut right si+1}, \eqref{definition s1,...,sn} and implication \eqref{L}, we get
		\begin{equation}\label{geq bei (si)+1 to si+1 to ell}
			\sum\nolimits_{m+1}^{s_{i+1}}\geq\sum\nolimits_{s_{i+1}+1}^{\ell} \text{ for all }m=s_{i}+1,\!...,s_{i+1}-1,
		\end{equation}
		which leads, in combination with \eqref{first cut (si)+1 left} and implication \eqref{F}, to \eqref{from (si)+1 to si+1}.
	\end{proof}
	\begin{proof}[Proof of Lemma \ref{lemma sufficent to check inequality for si}]
		Assume the statement was false. We show that this contradicts Lemma \ref{lemma optimal is a subset of indices for which assumption fails}. 
		
		 Then there exists  some $i\in\{\tilde{i}+1,\!...,\tilde{j}-1\}$ and $m\in\{s_{i}+1,\!...,s_{i+1}-1\}$ such that
		\begin{equation}\label{assume < does not hold for all indices between them}
			\sum\nolimits_{s_{\tilde{i}}+1}^{m}\geq\sum\nolimits_{m+1}^{s_{\tilde{j}}}.
		\end{equation}
		By \eqref{< for indices for which assumption fails} and implication \eqref{K} respectively \eqref{L}, this would imply
		\begin{align}
			&\label{cut right si+1}
			\sum\nolimits_{s_{\tilde{i}}+1}^{m}\geq\sum\nolimits_{m+1}^{s_{i+1}},\quad \mbox{and}\quad
			\sum\nolimits_{s_{i}+1}^{m}\geq\sum\nolimits_{m+1}^{s_{\tilde{j}}}.
		\end{align}
		Now \eqref{cut right si+1}, \eqref{< for indices for which assumption fails} and implication \eqref{F} would lead to
		\begin{equation}
			\sum\nolimits_{s_{\tilde{i}}+1}^{s_{i}}<\sum\nolimits_{s_{i}+1}^{m}.
		\end{equation}
	Together with with \eqref{cut right si+1} and implication \eqref{L}, we get
		\begin{equation}
			\sum\nolimits_{s_{i}+1}^{m}\geq\sum\nolimits_{m+1}^{s_{i+1}}.
		\end{equation}
		This his is a contradiction to the admissibility of $0=s_{0}<s_{1}<...<s_{n}<s_{n+1}=\ell$ and the claim follows.
	\end{proof}
	The example in Figure \ref{fig:picture-obstacles-block-division} illustrates the idea of dividing the obstacle landscape into blocks.
	\begin{figure}[H]
		\centering
		\includegraphics[width=1\linewidth]{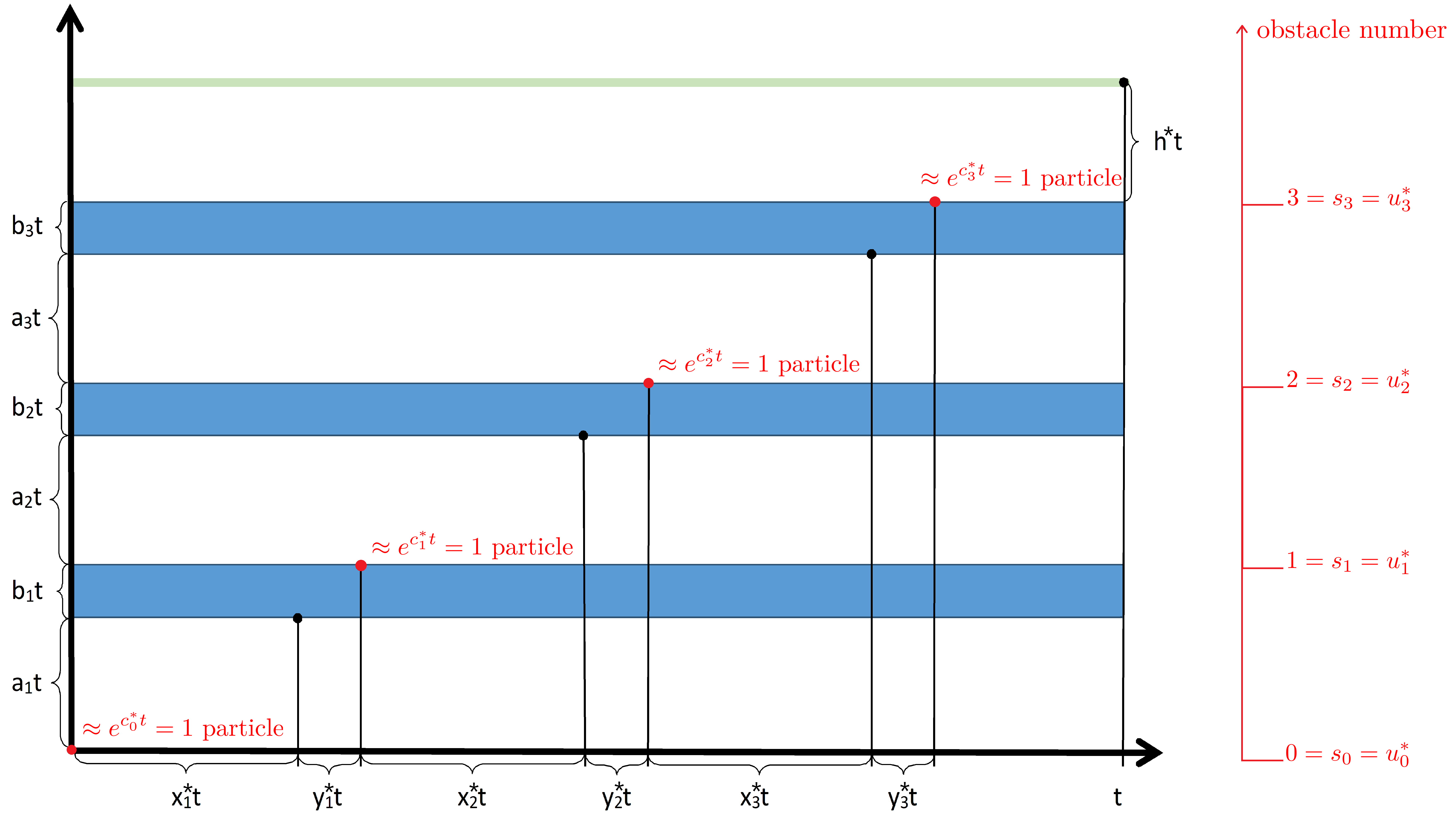}
		\caption{In our example, we have $a_{1}=a_{2}=a_{3}$ and $b_{1}=b_{2}=b_{3}$. As illustrated on the red axis, this implies $\{s_{1},\!...,s_{n}\}=\{1,2\}$ and also $0=u_{0}^{*}<1=u_{1}^{*}<2=u_{2}^{*}<3=u_{3}^{*}$. I.e. each obstacle and the corresponding branching area build a separate block. Heuristically, the optimal strategy has $\approx 1$ particle at the red dots.}
		\label{fig:picture-obstacles-block-division}
	\end{figure}
\subsection{Maximal particle as a descendant of one of the first particles above all obstacles}\label{First particle above obstacles will be maximal}
	In this subsection, we show that the argmax of \eqref{optimize} over $\hat{D}$ equals the argmax of \eqref{optimize} over $D$. We start with a lemma relating the two domains to each other.
	\begin{lem}\label{lemma D not empty iff Dhat not empty}
		The domain $\hat{D}$ is not empty if and only if $D$ is not empty. Furthermore, we have	
		\begin{equation}\label{xi*yi* fastest strategy}
			\sum_{i=1}^{\ell}(x_{i}^{*}+y_{i}^{*})\leq \sum_{i=1}^{\ell}(x_{i}+y_{i}),
		\end{equation}
		for all $(x_{1},y_{1},\!...,x_{\ell},y_{\ell})\in D$. 
	\end{lem}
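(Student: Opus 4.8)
The plan is to use the trivial inclusion $\hat{D}\subseteq D$ in one direction, a one–variable ``speed-up'' of the last obstacle in the other, and then to read off the inequality from the fact, established in Subsection~\ref{Optimal division into admissible blocks}, that $(x_{1}^{*},y_{1}^{*},\!...,x_{\ell}^{*},y_{\ell}^{*})$ minimises $\sum_{i=1}^{\ell}(x_{i}+y_{i})$ over $\hat{D}$.

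First, $\hat{D}\subseteq D$ by definition, so if $\hat{D}\neq\emptyset$ then $D\neq\emptyset$. For the converse, I would take $(x_{1},y_{1},\!...,x_{\ell},y_{\ell})\in D$ and set $c_{\ell}=\sum_{i=1}^{\ell}\big(x_{i}-\tfrac{a_{i}^{2}}{2x_{i}}-\tfrac{b_{i}^{2}}{2y_{i}}\big)\geq 0$, the last quantity being nonnegative by \eqref{domain1} for $m=\ell$. If $c_{\ell}=0$ the point already lies in $\hat{D}$. Otherwise I would modify only the last coordinate: the function $y\mapsto\sum_{i=1}^{\ell-1}\big(x_{i}-\tfrac{a_{i}^{2}}{2x_{i}}-\tfrac{b_{i}^{2}}{2y_{i}}\big)+x_{\ell}-\tfrac{a_{\ell}^{2}}{2x_{\ell}}-\tfrac{b_{\ell}^{2}}{2y}$ is continuous and strictly increasing on $(0,\infty)$, equals $c_{\ell}>0$ at $y=y_{\ell}$, and tends to $-\infty$ as $y\downarrow 0$, so it has a unique zero $\tilde{y}_{\ell}\in(0,y_{\ell})$. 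Since $y_{\ell}$ does not occur in \eqref{domain1} for $m<\ell$, replacing $y_{\ell}$ by $\tilde{y}_{\ell}$ leaves those conditions untouched, turns \eqref{domain1} into an equality for $m=\ell$, preserves \eqref{domain4}, and — because $\tilde{y}_{\ell}<y_{\ell}$ — preserves \eqref{domain3} while strictly decreasing $\sum_{i=1}^{\ell}(x_{i}+y_{i})$. Hence the new tuple lies in $\hat{D}$, so $D\neq\emptyset$ implies $\hat{D}\neq\emptyset$.

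For \eqref{xi*yi* fastest strategy}, if $D=\emptyset$ there is nothing to prove. If $D\neq\emptyset$, then $\hat{D}\neq\emptyset$ and, by the proof of Theorem~\ref{theorem candidate in the general case Dhat Variante} (together with Corollary~\ref{corolarry D not empty Dhat Variante} to ensure membership), $(x_{1}^{*},y_{1}^{*},\!...,x_{\ell}^{*},y_{\ell}^{*})\in\hat{D}$ realises $\min\{\sum_{i=1}^{\ell}(x_{i}+y_{i}):(x_{1},y_{1},\!...,x_{\ell},y_{\ell})\in\hat{D}\}$. Given an arbitrary $(x_{1},y_{1},\!...,x_{\ell},y_{\ell})\in D$, the construction above yields a point of $\hat{D}$ whose total time is at most $\sum_{i=1}^{\ell}(x_{i}+y_{i})$, whence $\sum_{i=1}^{\ell}(x_{i}^{*}+y_{i}^{*})\leq\sum_{i=1}^{\ell}(x_{i}+y_{i})$.

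I do not expect a genuine obstacle here: the monotonicity and limiting behaviour used in the speed-up step are elementary, since $y_{\ell}$ enters \eqref{domain1} only through the term $-b_{\ell}^{2}/(2y_{\ell})$, and the substantive content — that $(x_{i}^{*},y_{i}^{*})$ is the time-minimiser over $\hat{D}$ — has already been established. The only point requiring a little care is to make sure one is allowed to conclude $(x_{i}^{*},y_{i}^{*})\in\hat{D}$ precisely when $D$ (hence $\hat{D}$) is non-empty, which is exactly what Corollary~\ref{corolarry D not empty Dhat Variante} provides.
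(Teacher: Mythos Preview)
Your proof is correct and follows essentially the same approach as the paper: modify only the last coordinate $y_{\ell}$ to force equality in \eqref{domain1} for $m=\ell$, observe that the new $y_{\ell}$ is no larger, and then invoke Theorem~\ref{theorem candidate in the general case Dhat Variante}. The only cosmetic difference is that the paper writes the modified coordinate explicitly as $\tilde{y}_{\ell}=b_{\ell}^{2}\big/\big(2(\sum_{i=1}^{\ell-1}(x_{i}-\tfrac{a_{i}^{2}}{2x_{i}}-\tfrac{b_{i}^{2}}{2y_{i}})+x_{\ell}-\tfrac{a_{\ell}^{2}}{2x_{\ell}})\big)$ rather than obtaining it via your intermediate-value argument.
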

	\begin{proof}
		$\hat{D}$ is a subset of $D$ by definition. Conversely, for all $(x_{1},y_{1},\!...,x_{\ell},y_{\ell})\in D$ we have that
		\begin{equation}\label{modify yell to be in Dhat}
			\left(x_{1},y_{1},\!...,x_{\ell},\frac{b_{\ell}^{2}}{2\left(\sum_{i=1}^{\ell-1}\left(x_{i}-\frac{a_{i}^{2}}{2x_{i}}-\frac{b_{i}^{2}}{2y_{i}}\right)+x_{\ell}-\frac{a_{\ell}^{2}}{2x_{\ell}}\right)}\right) \in \hat{D}.
		\end{equation}
		Since the last entry of \eqref{modify yell to be in Dhat} is not greater than $y_{\ell}$, \eqref{xi*yi* fastest strategy} follows by Theorem \ref{theorem candidate in the general case Dhat Variante}. 
	\end{proof}
	Now, we state the main result of this section. Our candidate for the first order of the maximum of BBM among obstacles is $\sum_{i=1}^{\ell}(a_{i}+b_{i})t+h^{*}t$ where $h^{*}$ is equal to \eqref{h* in the general case}.
	\begin{theorem}\label{theorem candidate in the general case}
		If $D$ is not empty, the maximum of \eqref{optimize} over $D$ equals $(h^{*})^{2}/2$ with
		\begin{align}\label{h* in the general case}
			h^{*}=\sqrt{2}\left(1-\sum_{i=0}^{n^{*}}\left(\sum_{j=u_{i}^{*}+1}^{u_{i+1}^{*}}a_{j}f(\tilde{c}_{i})+\frac{\left(\sum_{j=u_{i}^{*}+1}^{u_{i+1}^{*}}b_{j}\right)^{2}}{2\sum_{j=u_{i}^{*}+1}^{u_{i+1}^{*}}a_{j}\left(f(\tilde{c}_{i})-\frac{1}{2f(\tilde{c}_{i})}\right)}\right)\right).
		\end{align}
	\end{theorem}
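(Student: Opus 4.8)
The plan is to deduce Theorem~\ref{theorem candidate in the general case} from the already–solved optimisation over $\hat D$ (Theorem~\ref{theorem candidate in the general case Dhat Variante}). Write $S=\sum_{i=1}^\ell(x_i+y_i)$ and $c_\ell=\sum_{i=1}^\ell\big(x_i-\tfrac{a_i^2}{2x_i}-\tfrac{b_i^2}{2y_i}\big)$, so that \eqref{optimize} equals $(1-S)(1-S+c_\ell)$, and on $D$ one has $c_\ell\geq0$ and $S\leq1$. Since $\hat D\subseteq D$, Theorem~\ref{theorem candidate in the general case Dhat Variante} already gives $\sup_D\eqref{optimize}\geq(h^*)^2/2$, so only the reverse bound needs an argument. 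A naive attempt — projecting a point of $D$ to $\hat D$ by shrinking $y_\ell$ until $c_\ell=0$ — does not quite suffice, so I would instead argue by convexity.

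First I would record the elementary inequality, valid on $D$,
\begin{equation*}
(1-S)(1-S+c_\ell)=(1-S)^2+(1-S)c_\ell\leq\big(1-S+\tfrac{c_\ell}{2}\big)^2=(1-\Phi)^2,
\end{equation*}
where $\Phi:=\Phi(x,y)=S-\tfrac{c_\ell}{2}=\sum_{i=1}^\ell\big(\tfrac{x_i}{2}+\tfrac{a_i^2}{4x_i}+y_i+\tfrac{b_i^2}{4y_i}\big)$. Because $0\leq1-S\leq1-\Phi$ on $D$ and $S^*:=\sum_{i=1}^\ell(x_i^*+y_i^*)\leq1$ (as $\hat D\neq\emptyset$, by Lemma~\ref{lemma D not empty iff Dhat not empty} and Corollary~\ref{corolarry D not empty Dhat Variante}), it suffices to prove $\Phi(x,y)\geq S^*$ for every $(x,y)\in D$: then $(1-\Phi)^2\leq(1-S^*)^2=(h^*)^2/2$, the last equality being Theorem~\ref{theorem candidate in the general case Dhat Variante}.

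The heart of the matter is that $\Phi$ is minimised on $D$ precisely at $(x^*,y^*)$. Here I would use that $\Phi$ is strictly convex (the terms $a_i^2/(4x_i)$ and $b_i^2/(4y_i)$ are strictly convex on $\{x_i>0\}$ resp.\ $\{y_i>0\}$), and that $D$ is convex: each $c_m$ is a concave function of $(x,y)$, since $x\mapsto x-a^2/(2x)$ and $y\mapsto -b^2/(2y)$ are concave, so each $\{c_m\geq0\}$ is convex, and \eqref{domain3}, \eqref{domain4} are convex as well. Thus $\Phi$ has a unique minimiser on $D$; since $(x^*,y^*)\in\hat D\subseteq D$ with $c_\ell(x^*,y^*)=0$, so that $\Phi(x^*,y^*)=S^*$, it remains to verify that $(x^*,y^*)$ satisfies the first–order (KKT) optimality condition for $\min_D\Phi$. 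The active constraints at $(x^*,y^*)$ are $c_{u_i^*}=0$ for $i=1,\dots,n^*+1$ (with $u^*_{n^*+1}=\ell$). Using that, by Lemma~\ref{lemma compute xm* and ym* for cm*}, inside each block $\{u_i^*+1,\dots,u_{i+1}^*\}$ one has $x_m^*=a_mf(\tilde c_i)$ and $y_m^*\propto b_m$, a short computation gives $\nabla\Phi(x^*,y^*)=\sum_{i=1}^{n^*+1}\mu_i\,\nabla c_{u_i^*}(x^*,y^*)$, where the $\mu_i$ are differences of the block quantities $\tfrac{2f(\tilde c_i)^2-1}{2(2f(\tilde c_i)^2+1)}$; these are $\geq0$ because $f(\tilde c_i)>1/\sqrt2$ by \eqref{f(ctilde)>1/sqrt(2)} and because $f(\tilde c_i)$ is non‑increasing in $i$ (the block cost ratios $\sum_{j=u_i^*+1}^{u_{i+1}^*}b_j/\sum_{j=u_i^*+1}^{u_{i+1}^*}a_j$ are non‑increasing in $i$, which is built into Definition~\ref{defu}).

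Granting this, the conclusion is immediate: for any $(x,y)\in D$, convexity of $\Phi$ and then concavity of the $c_{u_i^*}$ give
\begin{align*}
\Phi(x,y)&\geq\Phi(x^*,y^*)+\big\langle\nabla\Phi(x^*,y^*),(x,y)-(x^*,y^*)\big\rangle\\
&=S^*+\sum_{i=1}^{n^*+1}\mu_i\big\langle\nabla c_{u_i^*}(x^*,y^*),(x,y)-(x^*,y^*)\big\rangle\\
&\geq S^*+\sum_{i=1}^{n^*+1}\mu_i\,c_{u_i^*}(x,y)\;\geq\;S^*,
\end{align*}
the last step using $\mu_i\geq0$ and $c_{u_i^*}(x,y)\geq0$ on $D$. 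This yields $\sup_D\eqref{optimize}\leq(h^*)^2/2$, and together with the trivial lower bound proves the theorem; the identification of $h^*$ with \eqref{h* in the general case} is then exactly Theorem~\ref{theorem candidate in the general case Dhat Variante}. I expect the only delicate point to be the KKT verification in the multi‑block case — i.e.\ the non‑negativity of the multipliers $\mu_i$ — which, as indicated, reduces blockwise to the inequality $f(\tilde c)>1/\sqrt2$ and to the ordering of blocks in Definition~\ref{defu}; under Assumption~\eqref{assumptionaibi_intro} there is a single active constraint and the multiplier is explicitly $\tfrac{2f(\tilde c)^2-1}{2(2f(\tilde c)^2+1)}>0$.
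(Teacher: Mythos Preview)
Your proof is correct and takes a genuinely different route from the paper's. The paper argues directly on the objective \eqref{optimize}: assuming the maximiser over $D\setminus\hat D$ is interior (or on a face $c_{\tilde m}=0$ with $\tilde m<\ell$), it uses the first–order condition in $x_1$ (resp.\ $x_{\tilde m+1}$) to substitute back and reduce \eqref{compare boundary and interior} to $(1-S^*)^2>(1-S)^2\,a_1^2/(2x_1^2)$, which follows from $x_1>a_1/\sqrt2$ together with Lemma~\ref{lemma D not empty iff Dhat not empty}, and then iterates through the boundary strata. You instead dominate the objective by the perfect square $(1-\Phi)^2$ with $\Phi=\sum_i(\tfrac{x_i}{2}+\tfrac{a_i^2}{4x_i}+y_i+\tfrac{b_i^2}{4y_i})$, observe that $\Phi$ is strictly convex and $D$ is convex, and verify KKT optimality of $(x^*,y^*)$ for $\min_D\Phi$. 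This avoids the case analysis entirely and yields the stronger intermediate inequality $\Phi\geq S^*$ on all of $D$ (which implies, but is not implied by, the paper's $S\geq S^*$). The price is the KKT verification, in particular the non--negativity of the multipliers.

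One remark on that last point: the claim that the block ratios $\sum_{j=u_i^*+1}^{u_{i+1}^*}b_j\big/\sum_{j=u_i^*+1}^{u_{i+1}^*}a_j$ are non\-increasing in $i$ is true, but it is not quite ``built into'' Definition~\ref{defu}; it follows because the $\{u_i^*\}$ form the \emph{intersection} of all admissible divisions (proof of Theorem~\ref{theorem candidate in the general case Dhat Variante}). Indeed, if two consecutive block ratios satisfied the strict reverse inequality, implications \eqref{A}--\eqref{B} show that deleting the separating index $u_i^*$ would still yield an admissible division, contradicting minimality. It would strengthen your write--up to make this step explicit.
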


	\begin{proof}
		Assume w.l.o.g. $\sum_{i=1}^{\ell}\left(x_{i}^{*}+y_{i}^{*}\right)<1$. (Otherwise $D$ and $\hat{D}$ consist of only one element.) The claim follows by Theorem \ref{theorem candidate in the general case Dhat Variante} if we show
		\begin{equation}\label{compare boundary and interior}
			\left(1-\sum_{i=1}^{\ell}(x_{i}^{*}+y_{i}^{*})\right)^{2}>
			\left(1-\sum_{i=1}^{\ell}(x_{i}+y_{i})\right)\left(1-\sum_{i=1}^{\ell}\left( y_{i}+\frac{a_{i}^{2}}{2x_{i}}+\frac{b_{i}^{2}}{2y_{i}}\right)\right)
		\end{equation}
		for all $(x_{1},y_{1},\!...,x_{\ell},y_{\ell})\in D$ that do not satisfy \eqref{domain1} with equality for $m=\ell$. If some $(x_{1},y_{1},\!...,x_{\ell},y_{\ell})$ in the interior of $D$ maximized \eqref{optimize} over $D$, it would be a critical point. In particular, the derivative of \eqref{optimize} with respect to $x_{1}$ would have to satisfy
		\begin{equation}\label{assume interior->BEO x1}
			-\left(1-\sum_{i=1}^{\ell}\left( y_{i}+\frac{a_{i}^{2}}{2x_{i}}+\frac{b_{i}^{2}}{2y_{i}}\right)\right)+\left(1-\sum_{i=1}^{\ell}(x_{i}+y_{i})\right)\dfrac{a_{1}^{2}}{2x_{1}^{2}}=0.
		\end{equation}
		Plugging \eqref{assume interior->BEO x1} into \eqref{compare boundary and interior}, it remains to show
		\begin{equation}\label{compare boundary and interior 2}
			\left(1-\sum_{i=1}^{\ell}(x_{i}^{*}+y_{i}^{*})\right)^{2}>\left(1-\sum_{i=1}^{\ell}(x_{i}+y_{i})\right)^{2}\dfrac{a_{1}^{2}}{2x_{1}^{2}}.
		\end{equation}
		By \eqref{domain1} and $y_{1}>0$, we have $x_{1}> a_{1}/\sqrt{2}$, which implies $a_{1}^{2}/(2x_{1}^{2})<1$. Furthermore, we have $1-\sum_{i=1}^{\ell}(x_{i}^{*}+y_{i}^{*})\leq1-\sum_{i=1}^{\ell}(x_{i}+y_{i})$ by \eqref{xi*yi* fastest strategy}. Consequently, \eqref{compare boundary and interior 2} is indeed true.
		
		It remains to look at the boundary of $D$. $x_{i}\to 0$ or $y_{i}\to 0$ would contradict \eqref{domain1} because $x_{1}+...+x_{\ell}\leq1$ is not able to compensate $a_{i}^{2}/(2x_{i})\to\infty$ or $b_{i}^{2}/(2y_{i})\to\infty$. Equality in \eqref{domain3} would imply that the r.h.s of \eqref{compare boundary and interior} equals $0$ whereas the l.h.s is assumed to be strictly positive. If \eqref{domain1} holds with equality for some $\tilde{m}<\ell$, we have to show
		\begin{multline}\label{compare boundary and other boundary}
			\left(1-\sum_{i=1}^{\ell}(x_{i}^{*}+y_{i}^{*})\right)^{2}>\\
			\left(1-\sum_{i=1}^{\ell}(x_{i}+y_{i})\right)\left(\sum_{i=\tilde{m}+1}^{\ell}\left( x_{i}-\frac{a_{i}^{2}}{2x_{i}}-\frac{b_{i}^{2}}{2y_{i}}\right)+ 1-\sum_{i=1}^{\ell}(x_{i}+y_{i})\right).
		\end{multline}
		If one wants to maximize the r.h.s. of \eqref{compare boundary and other boundary} for given $(x_{1},y_{1},\!...,x_{\tilde{m}},y_{\tilde{m}})$, one can argue as above: If an interior solution was optimal, it would have to be a critical point and satisfy the first order condition with respect to $x_{\tilde{m}+1}$, which is given by
		\begin{equation}\label{assume interior->BEO xm+1}
			\left(\sum_{i=\tilde{m}+1}^{\ell}\left( x_{i}-\frac{a_{i}^{2}}{2x_{i}}-\frac{b_{i}^{2}}{2y_{i}}\right)+ 1-\sum_{i=1}^{\ell}(x_{i}+y_{i})\right)=\left(1-\sum_{i=1}^{\ell}(x_{i}+y_{i})\right)\frac{a_{\tilde{m}+1}^{2}}{2x_{\tilde{m}+1}^{2}}.
		\end{equation}
		Plugging \eqref{assume interior->BEO xm+1} into \eqref{compare boundary and other boundary}, we have to show
		\begin{equation}
			\left(1-\sum_{i=1}^{\ell}(x_{i}^{*}+y_{i}^{*})\right)^{2}>
			\left(1-\sum_{i=1}^{\ell}(x_{i}+y_{i})\right)^{2}\frac{a_{\tilde{m}+1}^{2}}{2x_{\tilde{m}+1}^{2}}.
		\end{equation}
		This is true as $x_{\tilde{m}+1}>a_{\tilde{m}+1}/\sqrt{2}$, by \eqref{domain1} for $m=\tilde{m}+1$ and the choice of $\tilde{m}$, and $1-\sum_{i=1}^{\ell}(x_{i}^{*}+y_{i}^{*})\geq1- \sum_{i=1}^{\ell}(x_{i}+y_{i})$ by \eqref{xi*yi* fastest strategy}. Hence, to show \eqref{compare boundary and interior}, we only have to consider those elements with equality in \eqref{domain1} for some $m>\tilde{m}$. Iterating this argument, one gets \eqref{compare boundary and interior} for all elements in $D$ that do not satisfy \eqref{domain1} with equality for $m=\ell$.
	\end{proof}
\section{Proof of Theorem \ref{theorem main result}}\label{section almost sure}
\subsection{Upper bound}\label{subsection upper bound}
	 We prove that, as $t \to \infty$, there exists almost surely no particle above $\sum_{i=1}^{\ell}(a_{i}+b_{i})t+(h^{*}+\epsilon)t$ (indicated by the horizontal red line in  Figure \ref{fig:picture-upper-bound-with-red-intervals}) at time $t$. 
	
	\begin{prop}\label{proposition upper bound}
		Let $(a_{i},b_{i})_{i=1}^{\ell}$ be some obstacle landscape such that $\sum_{i=1}^{\ell}(x_{i}^{*}+y_{i}^{*})\leq1$. Then, for any $\epsilon>0$, we have, almost surely,
		\begin{equation}\label{upper bound almost sure convergence}
			\lim\limits_{t\to\infty}\frac{\max_{k\leq n(t)}X_{k}(t)}{t}< \sum_{i=1}^{\ell}(a_{i}+b_{i})+h^{*}+\epsilon.
		\end{equation}
	\end{prop}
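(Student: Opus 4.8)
The plan is to prove the summable tail bound
\[
\mathbb{P}\Big(\max_{k\le n(n)}X_{k}(n)\ge \big(\,\textstyle\sum_{i=1}^{\ell}(a_{i}+b_{i})+h^{*}+\epsilon\,\big)n\Big)\le e^{-cn}
\]
along the integers $n$, with $c=c(\epsilon)>0$, and then to pass to continuous time in the usual way: since $\mathbb{E}[n(t)]\le e^{t}$ one has $n(n)\le e^{2n}$ eventually almost surely, and a Gaussian union bound over the (at most $e^{2n}$, eventually a.s.) particles alive at time $n$ and their descendants on $[n,n+1]$ shows that the displacements of all particles over a unit time interval are $o(n)$ eventually almost surely; hence $\limsup_{t\to\infty}t^{-1}\max_{k}X_{k}(t)=\limsup_{n\to\infty}n^{-1}\max_{k}X_{k}(n)$, and \eqref{upper bound almost sure convergence} follows from the tail bound by Borel--Cantelli. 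Throughout, $\sum_{i}(x_{i}^{*}+y_{i}^{*})\le1$ guarantees $h^{*}\ge0$, so the target level $L:=(\sum_{i}(a_{i}+b_{i})+h^{*}+\epsilon)t$ lies strictly above all obstacles.

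\textbf{A skeleton decomposition and a first-moment estimate.} Fix a small $\delta>0$, partition $[0,t]$ into $\lceil1/\delta\rceil$ blocks of length $\delta t$, and to each particle lying above $L$ at time $t$ associate the coarse-grained height trajectory of its ancestral line: the width-$\delta t$ band occupied at each block endpoint, where heights outside $[-Kt,(\sum_{i}(a_{i}+b_{i})+h^{*}+\epsilon+2)t]$ are truncated (excursions below $-Kt$ are discarded by a crude tail bound, valid for $K$ large since a return from level $-Kt$ forces a displacement cost $\gtrsim K^{2}t$) and blocks on which the path moves by $\ge t^{3/4}$ are discarded via \eqref{lemma for obstacles 3}. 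This leaves $O_{\delta}(1)$ many \emph{skeletons}; each prescribes feasible time fractions $x_{i}$ (the total time in the $i$-th obstacle-free band) and $y_{i}$ (the total time in the $i$-th obstacle) with $\sum_{i}(x_{i}+y_{i})\le1$, net rises $a_{i}t$ and $b_{i}t$ across these bands, and a terminal rise at least $(h^{*}+\epsilon)t-O(\delta t)$ above the last obstacle. By the many-to-one lemma --- bounding the branching-time integral along a skeleton from above by $t$ minus the total time it spends inside obstacles (with an error $C_{\ell}\delta t$ from the $O(\ell)$ blocks straddling a region boundary, after invoking the tube estimate \eqref{tube restriction} on the relevant sub-trees to discard the exponentially rare paths that leave a thin tube around the skeleton) and estimating the probability that a single Brownian path realises the skeleton via \eqref{lemma for obstacles 1}--\eqref{lemma for obstacles 2} together with Cauchy--Schwarz for the per-block increments --- one obtains, for $m=1,\dots,\ell$,
\[
\mathbb{E}\big[\#\{\text{particles whose line follows the skeleton up to the first passage to the top of obstacle }m\}\big]\le\exp\!\big(t\,\theta_{m}+o(t)+C_{\ell}\delta t\big),
\]
where $\theta_{m}:=\sum_{i=1}^{m}\big(x_{i}-\tfrac{a_{i}^{2}}{2x_{i}}-\tfrac{b_{i}^{2}}{2y_{i}}\big)$, and the same bound for $m=\ell+1$ (up to time $t$, reaching $L$) with $\theta_{\ell+1}:=J$, the functional $J$ of Section \ref{subsection optimization problem} evaluated at $(x_{i},y_{i})_{i}$ and $h=h^{*}+\epsilon$. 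Since a particle that follows a skeleton all the way to $L$ also follows each of its prefixes, Markov's inequality yields $\mathbb{P}(\exists\text{ a particle following the skeleton to }L)\le\exp(t\min_{1\le m\le\ell+1}\theta_{m}+o(t)+C_{\ell}\delta t)$, and summing over the $O_{\delta}(1)$ skeletons bounds the left-hand side of the tail estimate.

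\textbf{Closing the estimate via the optimisation problem.} It remains to prove that $\max_{\text{skeletons}}\min_{1\le m\le\ell+1}\theta_{m}\le-c(\epsilon)<0$; this is where Section \ref{subsection optimization problem} is used. If $\theta_{m}<0$ for some $m\le\ell$ there is nothing to show. Otherwise $(x_{i},y_{i})_{i}$ satisfies \eqref{domain1} and hence lies in $D$ (nonempty under the hypothesis $\sum_{i}(x_{i}^{*}+y_{i}^{*})\le1$), so by Theorem \ref{theorem candidate in the general case} the value of \eqref{optimize} at $(x_{i},y_{i})_{i}$ is at most $(h^{*})^{2}/2<(h^{*}+\epsilon)^{2}/2$; since $J\ge0$ is equivalent to $h^{2}/2$ being at most the value of \eqref{optimize} at $(x_{i},y_{i})_{i}$, this forces $\theta_{\ell+1}=J<0$. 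In either case $\min_{m}\theta_{m}<0$ for every skeleton reaching $L$, and since the constraints $\theta_{m}\ge-O(1)$ confine $(x_{i},y_{i})_{i}$ to a compact subset of $(0,\infty)^{2\ell}$, a compactness argument upgrades this to a uniform gap $c(\epsilon)>0$ independent of $\delta$. Choosing $\delta$ with $C_{\ell}\delta<c(\epsilon)/4$ then gives the tail bound with exponent $-c(\epsilon)/2$, which is summable, completing the argument.

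\textbf{Main difficulty.} The delicate point --- and the step I expect to require the most care --- is that one must estimate the first moment along the \emph{intermediate} checkpoints (the tops of the obstacles), not merely at the final time: a naive union bound over whole paths is genuinely too weak, because when Assumption \eqref{assumptionaibi_intro} fails the supremum of $J(\cdot,h)$ over all feasible $(x_{i},y_{i})_{i}$ (i.e.\ without \eqref{domain1}) remains strictly positive for $h$ a little above $h^{*}$ --- the block structure being invisible to it --- and it is precisely the prefix first moments, fed into the constrained optimisation of Section \ref{subsection optimization problem}, that restore the missing constraint \eqref{domain1}. A secondary technical burden is the $o(t)$-control of the branching-time integral along a skeleton, which relies on the tube estimate \eqref{tube restriction} to reduce to essentially monotone crossings of each region, and the bookkeeping needed to make all $o(t)$ errors uniform over the finitely many skeletons.
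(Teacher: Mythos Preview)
Your approach and the paper's share the essential insight—that a pure first-moment bound at time $t$ is insufficient and one must also track the first moments at the intermediate checkpoints (tops of the obstacles), so that the constraint \eqref{domain1} is recovered and Theorem \ref{theorem candidate in the general case} can be applied—but your discretisation is considerably heavier than the one the paper actually uses. Rather than coarse-graining the entire trajectory on a $\delta t$ mesh in both space and time, the paper records only the $2\ell$ times
\[
\tau_{2m-1}^{k}=\sup\Big\{s\le t:X_{k}(s)\le\textstyle\sum_{i<m}(a_{i}+b_{i})t+a_{m}t\Big\},\qquad
\tau_{2m}^{k}=\inf\Big\{s\ge\tau_{2m-1}^{k}:X_{k}(s)\ge\textstyle\sum_{i\le m}(a_{i}+b_{i})t\Big\},
\]
discretised to unit intervals, and union-bounds over the resulting $O(t^{2\ell})$ choices. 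The point of these particular stopping times is that, by definition, the ancestral line lies \emph{entirely inside} the $m$-th obstacle on $(\tau_{2m-1}^{k},\tau_{2m}^{k})$, so the branching integral on that stretch is exactly zero and no tube estimate is needed; between obstacles the paper simply dominates by standard BBM. This yields the iterated first-moment bound $\exp\big(t\sum_{i\le m}(x_{i}-a_{i}^{2}/(2x_{i})-b_{i}^{2}/(2y_{i}))\big)$ directly, with no Cauchy--Schwarz over per-block increments and no control of intra-block excursions. Your full-skeleton scheme can in principle be made to work, but the ``$o(t)$-control of the branching-time integral'' that you flag as a secondary burden is genuinely awkward in your setup: the estimate \eqref{tube restriction} is for BBM subtrees, not for a single Brownian path conditioned on a skeleton, so you would in fact need a separate Brownian-bridge excursion bound block by block, whereas the paper's choice of last-exit/first-entry times sidesteps the issue entirely. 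On the other hand, your way of packaging the two cases—either some $\theta_{m}<0$, or $(x_{i},y_{i})\in D$ and then $J<0$—via $\min_{m}\theta_{m}<0$ plus compactness is arguably tidier than the paper's explicit split of the parameter set into three pieces $D_{2},D_{3},D_{4}$ with a further threshold subdivision of $D_{4}$.
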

	\begin{proof}
		We show that for all $\epsilon>0$, there exists some constant $C_{2}>0$ such that
		\begin{equation}\label{upper bound convergence in probability}
			\mathbb{P}\left(\exists k\leq n(t) : X_{k}(t)\geq\left(\sum_{i=1}^{\ell}(a_{i}+b_{i})+h^{*}+\epsilon\right)t\right)\lesssim e^{-C_{2}t}.
		\end{equation}
		Since the r.h.s of \eqref{upper bound convergence in probability} is integrable with respect to $t$, \eqref{upper bound almost sure convergence} follows by the Borel-Cantelli Lemma and approximation arguments (see e.g. \cite{ABK_G}). We define, for $m=1,\!...,\ell$ and $k=1,\!...,n(t)$,
		\begin{align}
			\tau_{2m-1}^{k}&=\sup\left\{s\leq t : X_{k}(s)\leq \sum_{i=1}^{m-1}(a_{i}+b_{i})t+a_{m}t\right\},
			\\\tau_{2m}^{k}&=\inf\left\{s\geq\tau_{2m-1}^{k} : X_{k}(s)\geq \sum_{i=1}^{m}(a_{i}+b_{i})t\right\},
			\\\mathcal{X}(k)&=\left\{X_{k}(t)\geq\left(\sum_{i=1}^{\ell}(a_{i}+b_{i})+h^{*}+\epsilon\right)t\right\},
			\\\mathcal{X}(k,n_{1},\!...,n_{2\ell})&=
			\mathcal{X}(k)\cap \left\{\tau_{i}^{k}\in [n_{i}-1,n_{i}] \text{ for all }i=1,\!...,2\ell\right\}.
		\end{align}
		The events $\mathcal{X}(k)$ and $ \mathcal{X}(k,n_1,\dots,n_{2l})$ are visualised in Figure \ref{fig:picture-upper-bound-with-red-intervals}.  
		\begin{figure}[H]
		\centering
		\includegraphics[width=1\linewidth]{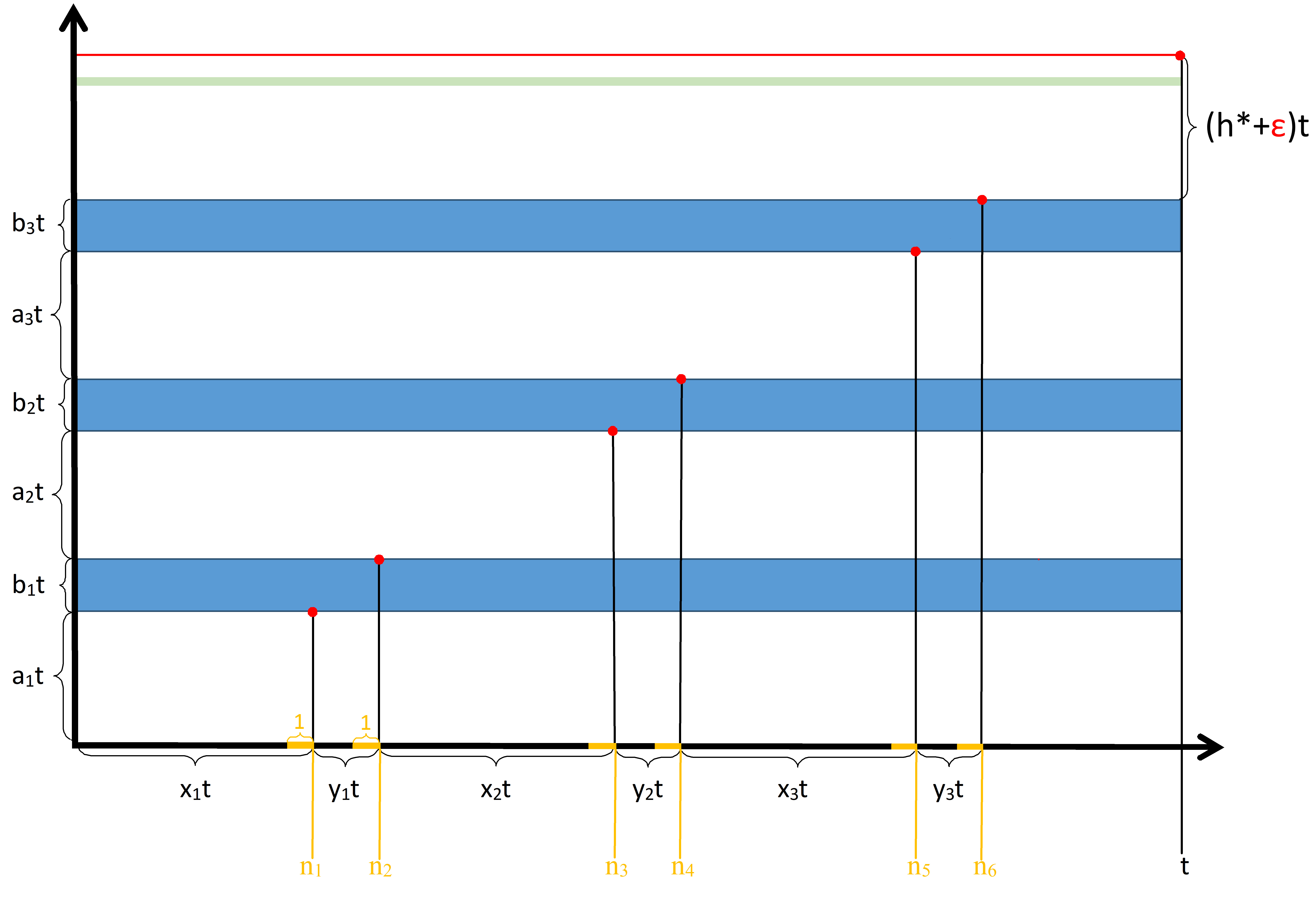}
		\caption{The horizontal red line is at height $\sum_{i=1}^{\ell}(a_{i}+b_{i})t+(h^{*}+\epsilon)t$. A particle follows the strategy marked with red dots if its last time below the $m$-th obstacle is in the interval $[n_{2m-1}-1,n_{2m-1}]$ and its next time above the $m$-th obstacle is in the interval $[n_{2m}-1,n_{2m}]$. At time $t$ it is above the horizontal red line.
		}
		\label{fig:picture-upper-bound-with-red-intervals}
	\end{figure}	
		The endpoints $(n_{1},\!...,n_{2\ell})$ of the intervals are in the domain 
		\begin{equation}
			D_{1}=\left\{(n_{1},\!...,n_{2\ell})\in([1,t]\cap \mathbb{N})^{2\ell} : n_{1}\leq...\leq n_{2\ell}\right\}.
		\end{equation}
	We rewrite the probability in \eqref{upper bound convergence in probability} as
		\begin{align}
		\mathbb{P}\left(\sum_{k=1}^{n(t)}\mathbbm{1}_{\mathcal{X}(k)}\geq 1\right)&
			\leq\sum_{(n_{1},\!...,n_{2\ell})\in D_{1}}\mathbb{P}\left(\sum_{k=1}^{n(t)}\mathbbm{1}_{\mathcal{X}(k,n_{1},\!...,n_{2\ell})}\geq 1\right)\label{union bound for upper bound},
		\end{align}
by a union bound.		From now on, we set
		\begin{equation}\label{relation new and old notation}
			(x_{1},y_{1},\!...,x_{\ell},y_{\ell})=\left(\frac{n_{1}}{t}, \frac{n_{2}-n_{1}}{t},\!...,\frac{n_{2\ell-1}-n_{2\ell-2}}{t},\frac{n_{2\ell}-n_{2\ell-1}}{t}\right).
		\end{equation}
		Furthermore, we define
		\begin{align}
			D_{2}=&\Big\{(n_{1},\!...,n_{2\ell})\in D_{1} : (x_{1},y_{1},\!...,x_{\ell},y_{\ell}) \in D\Big\},
			\\D_{3}=&\Big\{(n_{1},\!...,n_{2\ell})\in D_{1} : (x_{1},y_{1},\!...,x_{\ell},y_{\ell}) \text{ does not satisfy }\eqref{domain4}\Big\},
			\\D_{4}=&\Big\{(n_{1},\!...,n_{2\ell})\in D_{1} : (x_{1},y_{1},\!...,x_{\ell},y_{\ell}) \text{ satisfies }\eqref{domain4}\text{ but not }\eqref{domain1}\Big\}.
		\end{align}
		The domain $D_{2}$ considers those elements of $D_{1}$ that correspond to the domain $D$ of the optimization problem in Section \ref{subsection optimization problem}. The other domains are related to violating a condition of $D$. (Condition \eqref{domain3} can not be violated because of $n_{1}\leq...\leq n_{2\ell}$.) As $D_{1}=D_{2}\cup D_{3}\cup D_{4}$, \eqref{upper bound convergence in probability} follows, once we have proven the following three lemmas.
		\begin{lem}\label{lemma D2}
		Under the assumption of Proposition \ref{upper bound cover 2}, there is some constant $C_{3}>0$, depending on $\epsilon$, such that 
		\begin{equation}\label{upper bound cover 2}
			\sum_{(n_{1},\!...,n_{2\ell})\in D_{2}}\mathbb{P}\left(\sum_{k=1}^{n(t)}\mathbbm{1}_{\mathcal{X}(k,n_{1},\!...,n_{2\ell})}\geq 1\right)\lesssim e^{-C_{3}t}.
		\end{equation}
	\end{lem}
	\begin{lem}\label{lemma D3}
		Under the assumption of Proposition \ref{upper bound cover 2}, there is some constant $C_{4}>0$ such that
		\begin{equation}\label{eqD3}
			\sum_{(n_{1},\!...,n_{2\ell})\in D_{3}}\mathbb{P}\left(\sum_{k=1}^{n(t)}\mathbbm{1}_{\mathcal{X}(k,n_{1},\!...,n_{2\ell})}\geq 1\right)\lesssim e^{-C_{4}t}.
		\end{equation}
	\end{lem}
	\begin{lem}\label{lemma D5}
		Under the assumption of Proposition \ref{upper bound cover 2}, there is some constant $C_{5}>0$, depending on $\epsilon$, such that
		\begin{equation}\label{convergence D5}
		\sum_{(n_{1},\!...,n_{2\ell})\in D_{4}}\mathbb{P}\left(\sum_{k=1}^{n(t)}\mathbbm{1}_{\mathcal{X}(k,n_{1},\!...,n_{2\ell})}\geq 1\right)\lesssim e^{-C_{5}t}.
		\end{equation}
	\end{lem}
	\end{proof}
	
	\begin{proof}[Proof of Lemma \ref{lemma D2}]
		By Markov's inequality, we have
		\begin{equation}\label{Markov applied to one summand of union bound for upper bound}
			\sum_{(n_{1},\!...,n_{2\ell})\in D_{2}}\mathbb{P}\left(\sum_{k=1}^{n(t)}\mathbbm{1}_{\mathcal{X}(k,n_{1},\!...,n_{2\ell})}\geq 1\right)\leq t^{2\ell}\max\limits_{(n_{1},\!...,n_{2\ell})\in D_{2}}\mathbb{E}\left[\sum_{k=1}^{n(t)}\mathbbm{1}_{\mathcal{X}(k,n_{1},\!...,n_{2\ell})}\right].
		\end{equation} 
		Next, we bound the expectation on the r.h.s. in   \eqref{Markov applied to one summand of union bound for upper bound} from above. Comparing  BBM among obstacles with standard BBM and using \eqref{expectation branching areas}, we get
		\begin{equation}\label{expectation upper bound first dot}
			\mathbb{E}\left[\sum_{k=1}^{n(x_{1}t)}\mathbbm{1}_{\left\{\tau_{1}^{k}\in[n_{1}-1,n_{1}]\right\}}\right]\lesssim\exp\left(x_{1}t-\frac{a_{1}^{2}t}{2x_{1}}\right).
		\end{equation}
		Next, note that the contribution of particles with location above  $a_{1}t+t^{(3/4)}$ to the expectation in   \eqref{expectation upper bound first dot} is negligible compared to the r.h.s of  \eqref{expectation upper bound first dot}, as by Lemma \ref{lemma for obstacles}
		\begin{equation}
			\mathbb{E}\left[\sum_{k=1}^{n(x_{1}t)}\mathbbm{1}_{\left\{\tau_{1}^{k}\in[n_{1}-1,n_{1}]\right\}\cap\left\{X_{k}(x_{1}t)>a_{1}t+t^{\frac{3}{4}}\right\}}\right]\lesssim\exp\left(x_{1}t-\frac{a_{1}^{2}t}{2x_{1}}-\frac{t^{\frac{3}{2}}}{2}\right).
		\end{equation}
		Hence, we bound the way to the next branching area from below by $b_{1}t-t^{(3/4)}$ and the available time from above by $y_{1}t$. Between $\tau_{1}^{k}$ and $\tau_{2}^{k}$, $X_{k}$ does not branch. Possible branching in the small interval $[\tau_{2}^{k},n_{2}]$ can be taken into the error term. Then we get, by Lemma \ref{lemma for obstacles},
		\begin{equation}
			\mathbb{E}\left[\sum_{k=1}^{n\left((x_{1}+y_{1})t\right)}\mathbbm{1}_{\left\{\tau_{1}^{k}\in[n_{1}-1,n_{1}]\right\}\cap\left\{\tau_{2}^{k}\in[n_{2}-1,n_{2}]\right\}}\right]\lesssim\exp\left(x_{1}t-\frac{a_{1}^{2}t}{2x_{1}}-\frac{b_{1}^{2}t}{2y_{1}}\right).
		\end{equation}
		Again, only particles in $[(a_{1}+b_{1})t,(a_{1}+b_{1})t+t^{(3/4)}]$ are relevant. Iterating this procedure, we obtain the upper bound
		\begin{align}\label{expectation upper bound all dots}
			&\mathbb{E}\left[\sum_{k=1}^{n(t)}\mathbbm{1}_{\mathcal{X}(k,n_{1},\!...,n_{2\ell})}\right]\nonumber\\
		&	\lesssim \exp\Bigg(\Bigg[\sum_{i=1}^{\ell}\left( x_{i}-\frac{a_{i}^{2}}{2x_{i}}-\frac{b_{i}^{2}}{2y_{i}}\right)
			+ \left(1-\sum_{i=1}^{\ell}(x_{i}+y_{i})\right)-\frac{\left(h^{*}+\epsilon\right)^{2}}{2\left(1-\sum_{i=1}^{\ell}(x_{i}+y_{i})\right)}\Bigg]t\Bigg)\nonumber \\
			&\equiv M\left(x_{1},y_{1},\!...,x_{\ell},y_{\ell}\right).
			\end{align}
		Hence, the r.h.s. of \eqref{Markov applied to one summand of union bound for upper bound} is bounded from above by
		\begin{align}
		\label{optimization problem appears}&\lesssim \max\limits_{(x_{1},y_{1},\!...,x_{\ell},y_{\ell})\in D}M\left(x_{1},y_{1},\!...,x_{\ell},y_{\ell}\right).
		\end{align}
		By Theorem \ref{theorem candidate in the general case} and simple algebraic manipulations, \eqref{optimization problem appears} can be bounded from above by
		\begin{equation}
			\label{exponentially fast D2}
			\exp\left(\left[\sum_{i=1}^{\ell}\left(x_{i}^{*} -\frac{a_{i}^{2}}{2x_{i}^{*}}-\frac{b_{i}^{2}}{2y_{i}^{*}}\right)
			+1-\sum_{i=1}^{\ell}(x_{i}^{*}+y_{i}^{*})-\frac{(h^{*}+\epsilon)^{2}}{2\left(1-\sum_{i=1}^{\ell}(x_{i}^{*}+y_{i}^{*})\right)}\right]t\right).
		\end{equation}
		The exponent in \eqref{exponentially fast D2} is strictly negative, because
		\begin{equation}
			2\left(1-\sum_{i=1}^{\ell}(x_{i}^{*}+y_{i}^{*})\right)\left(\sum_{i=1}^{\ell}\left(x_{i}^{*} -\frac{a_{i}^{2}}{2x_{i}^{*}}-\frac{b_{i}^{2}}{2y_{i}^{*}}\right)
			+1-\sum_{i=1}^{\ell}(x_{i}^{*}+y_{i}^{*})\right)
		\end{equation}
		equals $(h^{*})^{2}$ which is strictly smaller than $(h^{*}+\epsilon)^{2}$.
	\end{proof}
		\begin{proof}[Proof of Lemma \ref{lemma D3}]
		If $(n_{1},\!...,n_{2\ell}) \in D_{3}$, we have $n_{i}=n_{i+1}$ for some $i\in\{1,\!...,2\ell\}$. In particular, a whole obstacle or branching area has to be crossed during the time interval $[n_{i}-1,n_{i}]$. For large $t$, the size of this obstacle respectively branching area is bounded from below by $t^{3/4}$. The expected number of particles at time $n_i$ is bounded from above by  $e^{t}$.  
		Hence, by Markov's inequality and Lemma \ref{lemma for obstacles}, we  bound the l.h.s. in \eqref{eqD3} from above by
		\begin{align}
		 t^{2\ell}\max\limits_{(n_{1},\!...,n_{2\ell})\in D_{3}}\mathbb{P}\left(\sum_{k=1}^{n(t)}\mathbbm{1}_{\mathcal{X}(k,n_{1},\!...,n_{2\ell})}\geq 1\right)
			&\lesssim\exp\left(t-\frac{t^{\frac{3}{2}}}{2}\right),
		\end{align}
		which is smaller than $e^{-t}$ for large enough $t$.
	\end{proof}
			\begin{proof}[Proof of Lemma \ref{lemma D5}]
			For $(n_1,\dots,n_{2l})$ given, let $m_1$ be the first index such that \eqref{domain1} does not hold. Moreover, we define $m_j$ through
			\begin{align}
			&\label{ok until mj}\sum_{i=m_{j-1}+1}^{m_j}\left(x_i-\frac{a_i^2}{2x_i}-\frac{b_i^2}{2y_i}\right)<0\quad\text{and}\nonumber\\
			&\sum_{i=m_{j-1}+1}^m\left(x_i-\frac{a_i^2}{2x_i}-\frac{b_i^2}{2y_i}\right)\geq0\text{ for all }m=m_{j-1}+1,\!...,m_j-1.
			\end{align}
	For notational convenience, we keep the dependence of $m_j$ on $(n_1,\dots,n_{2l})$ implicit.	
Next, we define 
$D_4(\delta,+)$ and $D_4(\delta,-)$ through
\begin{align}
D_4(\delta,+)&=\left\{(n_1,\dots,n_{2l})\in D_4: \exists m_j : \sum_{i=m_{j-1}+1}^{m_j}\left(x_i-\frac{a_i^2}{2x_i}-\frac{b_i^2}{2y_i}\right)<-\delta 
\right\},
\nonumber\\
D_4(\delta,-)&= D_4\setminus D_4(\delta,+).
\end{align}
		For $(n_1,\dots,n_{2l})\in D_4(\delta,+)$ we have, by Markov's inequality,
			\begin{equation}\label{bound for at least deltahat to small}
			\sum_{(n_{1},\!...,n_{2\ell})\in D_4(\delta,+)}\mathbb{P}\left(\sum_{k=1}^{n(t)}\mathbbm{1}_{\mathcal{X}(k,n_{1},\!...,n_{2\ell})}\geq 1\right)\lesssim \exp\left(-\delta t\right).
			\end{equation}
 Let $\overline{D_4(\delta,-)}$ be the closure of $D_4(\delta,-)$
			and let $(\overline{x}_{1},\overline{y}_{1},\!...,\overline{x}_{\ell},\overline{y}_{\ell})$ maximize
			\begin{equation}\label{optimize again}
			\sum_{i=1}^{\ell}\left(x_{i}-\frac{a_{i}^{2}}{2x_{i}}-\frac{b_{i}^{2}}{2y_{i}}\right)
			+1-\sum_{i=1}^{\ell}(x_{i}+y_{i})-\frac{(h^{*}+\epsilon)^{2}}{2\left(1-\sum_{i=1}^{\ell}(x_{i}+y_{i})\right)}
			\end{equation} 
			over $\overline{D_4(\delta,-)}$. Then we have, by Markov's inequality,
			\begin{multline}\label{bound for at most deltahat to small}
			\sum_{(n_{1},\!...,n_{2\ell})\in \overline{D_4(\delta,-)}}\mathbb{P}\left(\sum_{k=1}^{n(t)}\mathbbm{1}_{\mathcal{X}(k,n_{1},\!...,n_{2\ell})}\geq 1\right)\lesssim \\	\exp\Bigg[\Bigg(\sum_{i=1}^{\ell}\left(\overline{x}_{i}-\frac{a_{i}^{2}}{2\overline{x}_{i}}-\frac{b_{i}^{2}}{2\overline{y}_{i}}\right)
			+1-\sum_{i=1}^{\ell}(\overline{x}_{i}+\overline{y}_{i})-\frac{(h^{*}+\epsilon)^{2}}{2\left(1-\sum_{i=1}^{\ell}(\overline{x}_{i}+\overline{y}_{i})\right)}\Bigg)t\Bigg].
			\end{multline}
			By Theorem \ref{theorem candidate in the general case} and simple computations, the maximum of \eqref{optimize again} over $D$ is attained at $(x_1^*,y_1^*,\!...,x_{\ell}^*,y_{\ell}^*)$ and strictly negative. Since $\lim\limits_{\delta\searrow 0}\overline{D_4(\delta,-)}\left(\hat{\delta}\right)\subset D_2$, we can choose $\delta>0$ so small that the exponent on the r.h.s. of \eqref{bound for at most deltahat to small} is also strictly negative. Combining \eqref{bound for at least deltahat to small} and \eqref{bound for at most deltahat to small}, we get \eqref{convergence D5} via a union bound, by choosing $\delta>0$ small enough. 
	\end{proof}
\subsection{Lower bound}\label{subsection lower bound}
In this subsection, we prove the following proposition.

	\begin{prop}\label{proposition lower bound}
		Let $(a_{i},b_{i})_{i=1}^{\ell}$ be some obstacle landscape such that $\sum_{i=1}^{\ell}(x_{i}^{*}+y_{i}^{*})\leq1$. Then, for any $\epsilon>0$, we have, almost surely,
		\begin{equation}\label{lower bound almost sure convergence}
			\lim\limits_{t\to\infty}\frac{\max_{k\leq n(t)}X_{k}(t)}{t}> \sum_{i=1}^{\ell}(a_{i}+b_{i})+h^{*}-\epsilon.
		\end{equation}
	\end{prop}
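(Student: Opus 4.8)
The plan is to prove the matching lower bound by exhibiting, with probability tending to one (fast enough for Borel--Cantelli), at least one particle at time $t$ above the target height $\left(\sum_{i=1}^{\ell}(a_{i}+b_{i})+h^{*}-\epsilon\right)t$. The natural strategy is to follow the optimal path prescribed by the solution of the optimization problem of Section~\ref{subsection optimization problem}: a particle should spend time $\approx x_{m}^{*}t$ crossing the $m$-th branching area and time $\approx y_{m}^{*}t$ crossing the $m$-th obstacle, so that by time $\sum_{i=1}^{\ell}(x_{i}^{*}+y_{i}^{*})t$ there are, by the heuristics made precise below, of order $e^{c_{\ell}^{*}t}$ (or simply order one, when $c_{\ell}^{*}=0$) particles above all obstacles at height $\sum_{i=1}^{\ell}(a_{i}+b_{i})t$; during the remaining time $(1-\sum_{i=1}^{\ell}(x_{i}^{*}+y_{i}^{*}))t$ branching is unsuppressed, so one runs an ordinary BBM and invokes Bramson's result \eqref{max.hom} to gain an additional $\sqrt{2}(1-\sum_{i=1}^{\ell}(x_{i}^{*}+y_{i}^{*}))t = h^{*}t$ with probability tending to one.

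\medskip
\textbf{Key steps, in order.} First, I would set up the recursive ``layer'' structure: let $N_{m}$ denote the number of particles that are within a tube of width $o(t)$ around the optimal straight-line path and reach height $\sum_{i=1}^{m}(a_{i}+b_{i})t$ at time $\sum_{i=1}^{m}(x_{i}^{*}+y_{i}^{*})t$. Using Lemma~\ref{lemma for branching areas} (in particular \eqref{result GKS2018} and the tube restriction \eqref{tube restriction}) one shows that, conditionally on $N_{m-1}$ being at least $e^{(c_{m-1}^{*}-\eta)t}$, each such particle independently produces, with probability bounded below by a positive constant (or at least $e^{-o(t)}$), at least $e^{(c_{m}^{*}-c_{m-1}^{*}-a_{m}^{2}/(2x_{m}^{*})-b_{m}^{2}/(2y_{m}^{*})+x_m^*-\eta')t}$ offspring at height $\sum_{i=1}^{m}(a_{i}+b_{i})t$ at the appropriate time — the first factor from the branching area via \eqref{expectation branching areas}, the second via the Gaussian estimate \eqref{lemma for obstacles 1} for the (branching-free) Brownian crossing of the obstacle. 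Iterating and using a second-moment / Chebyshev argument along each layer (standard for BBM level sets; one truncates on the tube event of Lemma~\ref{lemma for branching areas} to control variance) gives $N_{\ell}\geq e^{(c_{\ell}^{*}-\eta)t}=e^{-\eta t}$ with probability $1-e^{-\Omega(t)}$; when $c_{\ell}^{*}=0$ this still yields at least one particle almost surely in the limit, which suffices. Second, from each of these $N_{\ell}$ particles above all obstacles, I restart an independent ordinary BBM of length $(1-\sum_{i=1}^{\ell}(x_{i}^{*}+y_{i}^{*})-\eta)t$; since at least one exists, \eqref{max.hom} and the tightness of the BBM maximum give a particle at relative height $\geq \sqrt{2}(1-\sum_{i=1}^{\ell}(x_{i}^{*}+y_{i}^{*}))t - o(t)$ with probability tending to one. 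Combining, and using that $h^{*}=\sqrt{2}(1-\sum_{i=1}^{\ell}(x_{i}^{*}+y_{i}^{*}))$ while the $o(t)$ and $\eta$-losses can be absorbed into $\epsilon$, gives \eqref{lower bound almost sure convergence} in probability with exponentially small error, hence almost surely by Borel--Cantelli plus the usual monotonicity/approximation argument (cf.\ \cite{ABK_G}).

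\medskip
\textbf{Main obstacle.} The delicate point is the second-moment estimate that turns the first-moment (expectation) computation \eqref{expectation upper bound all dots}-style bound into an actual lower bound on $N_{m}$ with overwhelming probability, layer by layer. Particles in the same layer share ancestry, so their contributions are correlated; the standard remedy is to restrict to particles staying inside the $\delta t$-tube around the optimal ray (so that the pairwise-correlation sum is dominated by pairs that split early, contributing only a lower-order term), and here Lemma~\ref{lemma for branching areas}, \eqref{tube restriction} in particular, is tailored precisely for this purpose. One must also be careful that the obstacle-crossing segments contribute no branching, so that along those segments $N_{m}$ is merely thinned by the Gaussian probability \eqref{lemma for obstacles 1} with no compensating multiplicative growth — this is exactly why the exponents $-b_{m}^{2}/(2y_{m})$ appear with a minus sign and why condition \eqref{domain1} ($c_{m}^{*}\geq 0$ for all $m$, guaranteed by the optimization in Section~\ref{subsection optimization problem}) is needed to keep each layer non-empty. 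Finally, one checks that the exponential rate at which $\mathbb{P}(N_{\ell}<e^{-\eta t})$ vanishes is uniform in the finitely many layers, so a union bound over $m=1,\dots,\ell$ and over the $O(t^{2\ell})$ choices of interval endpoints (as in the upper-bound proof) still leaves an $e^{-\Omega(t)}$ bound, which is summable.
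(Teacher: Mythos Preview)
Your overall strategy---follow the optimal path $(x_m^*,y_m^*)$, count particles layer by layer, and invoke Bramson for the final stretch---is the right one, and matches the paper's. However, there is a genuine gap in your argument at the layers where $c_m^*=0$.

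\textbf{The gap.} By construction $c_\ell^*=0$ (and in general $c_{u_i^*}^*=0$ at every block boundary), so along the \emph{exact} optimal schedule the expected number of particles above the $\ell$-th obstacle is only of order $1$, not $e^{\Omega(t)}$. A second-moment or Paley--Zygmund argument applied to a random variable with mean $\approx 1$ gives at best $\mathbb{P}(N_\ell\geq 1)\geq c>0$, not $\mathbb{P}(N_\ell\geq 1)\geq 1-e^{-\Omega(t)}$. Your sentence ``gives $N_\ell\geq e^{-\eta t}$ with probability $1-e^{-\Omega(t)}$; when $c_\ell^*=0$ this still yields at least one particle'' therefore does not follow: the exponential decay of the failure probability, which you need for Borel--Cantelli, is precisely what breaks down when the mean is not itself exponentially large. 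The paper's fix is to insert a small extra time $\hat\delta t$ (with $\hat\delta<\epsilon/\sqrt2$) in the very first branching area; this makes every layer exponent $\alpha_m,\beta_m$ \emph{strictly} positive, so that at every stage one has $e^{\Omega(t)}$ particles and Paley--Zygmund does give failure probability $e^{-\Omega(t)}$. The $\hat\delta$ is then absorbed into the $\epsilon$ at the end since $h^*-\sqrt2\hat\delta>h^*-\epsilon$.

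\textbf{A secondary difference.} Once the $\hat\delta$-slack is in place, the paper does \emph{not} use the tube restriction \eqref{tube restriction} or control correlations within a single BBM. Instead it conditions on having $e^{\beta_{m-1}t}$ particles at the previous layer, treats each as the root of an \emph{independent} sub-BBM (in a branching area) or an independent Gaussian (across an obstacle), and applies Paley--Zygmund to the resulting sum of i.i.d.\ indicators. This sidesteps the ancestry-correlation issue entirely and is simpler than the tube-based second-moment computation you outline. Also, no union bound over $O(t^{2\ell})$ interval endpoints is needed for the lower bound: one fixes the single optimal schedule and shows it succeeds.
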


Before proving Proposition \ref{proposition lower bound}, we need to introduce some notation. For $\delta>0$ and $C>0$, we define, for $m=1,\!...,\ell$, the intervals
\begin{align}\label{defI}
	I_{m}^{A}&=\sum_{i=1}^{m-1}(a_{i}+b_{i})t+\left[ (a_{m}-\delta)t, a_{m}t\right],\quad
I_{m}^{B}=\sum_{i=1}^{m}(a_{i}+b_{i})t+\left[\delta t,\delta t+C\right].
\end{align}
Moreover, we define, for $k\leq n(t)$, the events
\begin{align}
A_{m}^{k}&=\left\{X_{k}\left(\sum_{i=1}^{m-1}(x_{i}^{*}+y_{i}^{*})t+x_{m}^{*}t+\hat{\delta}t\right)\in I_{m}^{A}\right\},\\
B_{m}^{k}&=\left\{X_{k}\left(\sum_{i=1}^{m}(x_{i}^{*}+y_{i}^{*})t+\hat{\delta}t\right)\in I_{m}^{B}\right\}.
\end{align}
To prove Proposition \ref{proposition lower bound}, we show that there is a particle $X_k$ such that $X_k\in \bigcap_{m=1}^\ell \left(A_{m}^{k}\cap B_{m}^{k}\right)$ and $X_k(t)/t$ is larger than the r.h.s.\ of \eqref{lower bound almost sure convergence}. 
	\begin{proof}
		 First, assume $\sum_{i=1}^{\ell}(x_{i}^{*}+y_{i}^{*})<1$ and let w.l.o.g. $\epsilon/\sqrt{2}\in (0,1-\sum_{i=1}^{\ell}(x_{i}^{*}+y_{i}^{*}))$. We fix the additional time $\hat{\delta}>0$ such that $\hat{\delta}<\epsilon/\sqrt{2}$.
		 
	We introduce some events, see Figure \ref{fig:picture-lower-bound} for an illustration.
			\begin{figure}[H]
			\centering
			\includegraphics[width=1\linewidth]{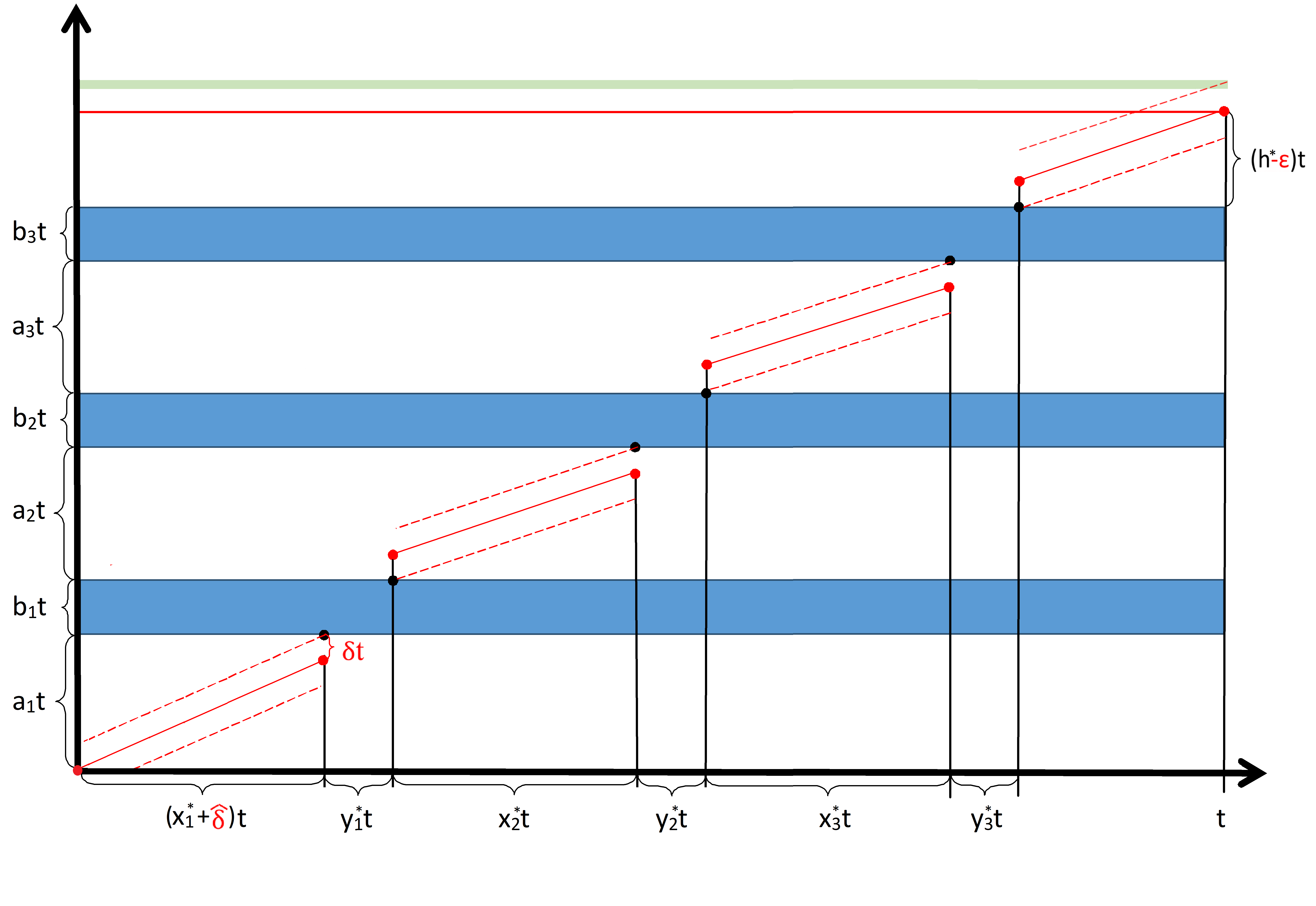}
			\caption[Proof idea lower bound]{The horizontal red line is at height $\sum_{i=1}^{\ell}(a_{i}+b_{i})t+(h^{*}-\epsilon)t$. To prove Proposition \ref{proposition lower bound}, we show that a certain number of particles follows the strategy marked with red dots.
			}
			\label{fig:picture-lower-bound}
		\end{figure}
	We define, for $m=1,\!...,\ell$ and some constant $C>0$,
		 \begin{align}
		 \mathcal{A}_{m}&=\left\{\exists t_{0}\forall t>t_{0}:\#\left\{k\leq n\left(\sum_{i=1}^{m-1}(x_{i}^{*}+y_{i}^{*})t+x_{m}^{*}t +\hat{\delta}t\right) : A_{m}^{k} \right\}\geq e^{\alpha_{m}t}\right\},
		 \\	\mathcal{B}_{m}&=\left\{\exists t_{0}\forall t>t_{0}:\#\left\{k\leq n\left(\sum_{i=1}^{m}(x_{i}^{*}+y_{i}^{*})t+\hat{\delta}t\right) : B_{m}^{k} \right\}\geq e^{\beta_{m}t}\right\},
				 \end{align}
		 where 
		\begin{align}
		\alpha_{m}&=\sum_{i=1}^{m}\left(x_{i}^{*}+\hat{\delta}\mathbbm{1}_{i=1}-\frac{(a_{i}-\delta)^{2}}{2(x_{i}^{*}+\hat{\delta}\mathbbm{1}_{i=1})}-\frac{(b_{i}+2\delta)^{2}}{2y_{i}^{*}}\right)+\frac{(b_{m}+2\delta)^{2}}{2y_{m}^{*}}
		-(2m-1)\gamma,
			\\\beta_{m}&=\alpha_{m}-\frac{(b_{m}+2\delta)^{2}}{2y_{m}^{*}}-\gamma.
		\end{align}
		Moreover, we define
		\begin{equation}
		 \mathcal{H}=\left\{\exists t_{0}\forall t>t_{0}\exists k\leq n(t) : X_{k}(t)\geq\left(\sum_{i=1}^{\ell}(a_{i}+b_{i})+h^{*}-\epsilon\right)t\right\}.
		\end{equation}
		 Since $(x_{1}^{*},y_{1}^{*},\!...,x_{\ell}^{*},y_{\ell}^{*})$ is in $D$, we have 
		\begin{align}
			\label{domain1 again}&\sum_{i=1}^{m}\left( x_{i}^{*}-\frac{a_{i}^{2}}{2x_{i}^{*}}-\frac{b_{i}^{2}}{2y_{i}^{*}}\right)\geq 0
			\mbox{ and }\sum_{i=1}^{m-1}\left( x_{i}^{*}-\frac{a_{i}^{2}}{2x_{i}^{*}}-\frac{b_{i}^{2}}{2y_{i}^{*}}\right)+x_{m}^{*}-\frac{a_{m}^{2}}{2x_{m}^{*}}\geq 0
		\end{align}
		for all $m=1,\!...,\ell$ . If we replace $x_{1}^{*}t$ by $(x_{1}^{*}+\hat{\delta})t$, the inequalities in \eqref{domain1 again} are strict. By continuity, we can choose the tube width parameter $\delta>0$ and the error parameter $\gamma>0$ so small, depending on $\hat{\delta}$, that $\alpha_{m}>0$ and $\beta_{m}>0$ for all $m=1,\!...,\ell$. 
	 The probability of $\mathcal{H}$ should go to one. By monotonicity and the Markov property, we have the lower bound
		\begin{align}\label{lower bound for P(mathcal H)}
			\mathbb{P}\left(\mathcal{H}\right)\geq& \mathbb{P}\left(\bigcap_{m=1}^{\ell}\left(\mathcal{A}_{m}\cap\mathcal{B}_{m}\right)\cap\mathcal{H}\right)
			\\\label{lower bound for P(mathcal H) part 2}=&\mathbb{P}\left(\mathcal{H}\middle|\mathcal{B}_{\ell}\right)
			\left(\prod_{m=2}^{\ell}\mathbb{P}\left(\mathcal{B}_{m}\middle|\mathcal{A}_{m}\right)
			\mathbb{P}\left(\mathcal{A}_{m}\middle|\mathcal{B}_{m-1}\right)\right)
			\mathbb{P}\left(\mathcal{B}_{1}\middle|\mathcal{A}_{1}\right)
			\mathbb{P}\left(\mathcal{A}_{1}\right).
		\end{align}
		By Lemmas \ref{lemma P(H | Bl)=1}, \ref{lemma P(Bm | Am)=1}, \ref{lemma P(Am |Bm-1)=1} and \ref{lemma for branching areas}, each factor of \eqref{lower bound for P(mathcal H) part 2} is equal to one. The claim \eqref{lower bound almost sure convergence} follows.
		
		If $\sum_{i=1}^{\ell}(x_{i}^{*}+y_{i}^{*})=1$, one could choose $\hat{\delta}>0$ so small that the inequalities in \eqref{domain1 again}  with $b_{\ell}$ replaced by $b_{\ell}-\epsilon$, $x_{1}^{*}$ replaced by $x_{1}^{*}+\hat{\delta}$ and $y_{\ell}^{*}$ replaced by $y_{\ell}^{*}-\hat{\delta}$ are strict. Then one could define $\alpha_{m}$, $\beta_{m}$, $A_{m}^{k}$, $B_{m}^{k}$, $\mathcal{A}_{m}$ and $\mathcal{B}_{m}$ as above but with $b_{\ell}+2\delta$ replaced by $b_{\ell}+\delta-\epsilon$ and $y_{m}^{*}$ replaced by $y_{m}^{*}-\hat{\delta}$. One could choose $\delta>0$ and $\gamma>0$ so small that $\alpha_{m}$ and $\beta_{m}$ are strictly positive. In \eqref{lower bound for P(mathcal H)}, one would have $\mathbb{P}\left(\mathcal{H}\middle|\mathcal{B}_{\ell}\right)=1$ by $\mathcal{B}_{\ell}\subset\mathcal{H}$. The remaining computations would work analogously.
	\end{proof}
	\begin{lem}\label{lemma P(H | Bl)=1}
		Under the assumption of Proposition \ref{proposition lower bound} and for $\sum_{i=1}^{\ell}(x_{i}^{*}+y_{i}^{*})<1$, we have $\mathbb{P}\left(\mathcal{H}\middle|\mathcal{B}_{\ell}\right)=1$.
	\end{lem}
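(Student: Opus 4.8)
Write $s:=1-\sum_{i=1}^{\ell}(x_{i}^{*}+y_{i}^{*})$, so that $h^{*}=\sqrt{2}\,s$, and recall that in the situation of the lemma $s>0$ and, without loss of generality, $\epsilon<\sqrt{2}\,s$. The idea is the following. On $\mathcal{B}_{\ell}$ there is a random $t_{0}$ such that for every $t>t_{0}$ at least $e^{\beta_{\ell}t}$ particles lie in $I_{\ell}^{B}$, hence at height at least $\sum_{i=1}^{\ell}(a_{i}+b_{i})t+\delta t$, at time $(1-s+\hat{\delta})t$. Since $K\subset\left(0,\sum_{i=1}^{\ell}(a_{i}+b_{i})t\right)$, the region above height $\sum_{i=1}^{\ell}(a_{i}+b_{i})t$ carries branching rate $1$; thus, conditionally on the configuration at time $(1-s+\hat{\delta})t$, each of those particles launches an \emph{independent} standard binary BBM (by the branching property), which I will run for the remaining time $(s-\hat{\delta})t$ and restrict to a corridor of width $o(t)$ around the straight line from its starting point up to height $\sum_{i=1}^{\ell}(a_{i}+b_{i})t+(h^{*}-\epsilon')t$ for a suitable $\epsilon'<\epsilon$; for $t$ large this corridor stays above $\sum_{i=1}^{\ell}(a_{i}+b_{i})t$, so inside it the process is genuinely standard BBM. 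I will show that with overwhelming probability one of these sub--BBMs has a descendant reaching $\left(\sum_{i=1}^{\ell}(a_{i}+b_{i})+h^{*}-\epsilon\right)t$.

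First I would fix $\epsilon'\in(0,\epsilon)$ with $\epsilon'>\sqrt{2}\,\hat{\delta}-\delta$; this is possible because $\hat{\delta}<\epsilon/\sqrt{2}$ gives $\sqrt{2}\,\hat{\delta}<\epsilon$. Put $\lambda:=h^{*}-\epsilon'-\delta=\sqrt{2}\,s-\epsilon'-\delta>0$; by the choice of $\epsilon'$ the rate of ascent needed over the free stretch satisfies $\lambda/(s-\hat{\delta})<\sqrt{2}$. For $n\in\mathbb{N}$ let $G_{n}$ be the $\mathcal{F}_{(1-s+\hat{\delta})n}$--measurable event that at least $e^{\beta_{\ell}n}$ particles lie in $I_{\ell}^{B}$ at time $(1-s+\hat{\delta})n$, so that $\mathcal{B}_{\ell}\subset\liminf_{n}G_{n}$. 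On $G_{n}$ I pick $\lceil e^{\beta_{\ell}n}\rceil$ of these particles and consider their descendant trees after time $(1-s+\hat{\delta})n$, which are independent standard BBMs. By the law of large numbers for the maximum of standard BBM (equivalently, by \eqref{result GKS2018} and the positivity of $M_{a}$, together with the tube estimate \eqref{tube restriction} to keep the path inside the corridor), a single such tree contains, by time $(s-\hat{\delta})n$, a descendant that has climbed by at least $\lambda n$ inside the corridor with probability $q_{n}\to 1$, since $\lambda/(s-\hat{\delta})<\sqrt{2}$. Hence, conditionally on $\mathcal{F}_{(1-s+\hat{\delta})n}$ and on $G_{n}$, the probability that none of the $\lceil e^{\beta_{\ell}n}\rceil$ trees succeeds is at most $(1-q_{n})^{e^{\beta_{\ell}n}}\le 2^{-e^{\beta_{\ell}n}}$ for $n$ large; because $\beta_{\ell}>0$ (which the choice of $\delta,\gamma$ guarantees) this is summable in $n$, so Borel--Cantelli gives: almost surely, for all large $n$, either $G_{n}$ fails or some particle satisfies $X_{k}(n)\ge\left(\sum_{i=1}^{\ell}(a_{i}+b_{i})+h^{*}-\epsilon'\right)n$. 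On $\mathcal{B}_{\ell}$ the first alternative is eventually excluded.

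To upgrade this to all real $t$, I would follow one ancestral lineage of the particle attaining the bound at time $n$: over a time window of length at most $1$ this lineage moves like a Brownian motion, so by \eqref{lemma for obstacles 3} it drops by more than $n^{3/4}$ with probability $\lesssim e^{-n^{3/2}/2}$, which is summable; another Borel--Cantelli then shows that almost surely, on $\mathcal{B}_{\ell}$, for all large $n$ and all $t\in[n,n+1]$,
\[
\max_{k\le n(t)}X_{k}(t)\ \ge\ \left(\sum_{i=1}^{\ell}(a_{i}+b_{i})+h^{*}-\epsilon'\right)n-n^{3/4}\ \ge\ \left(\sum_{i=1}^{\ell}(a_{i}+b_{i})+h^{*}-\epsilon\right)t ,
\]
the last inequality holding for $n$ large because the linear surplus $(\epsilon-\epsilon')n$ dominates $n^{3/4}$. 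Hence $\mathcal{B}_{\ell}\setminus\mathcal{H}$ is a null set, i.e.\ $\mathbb{P}(\mathcal{H}\mid\mathcal{B}_{\ell})=1$.

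The delicate point is the transition from the \emph{per--BBM} almost sure behaviour in \eqref{result GKS2018} --- which only controls a single BBM as its time horizon grows --- to a statement holding simultaneously for all large horizons $t$: this is exactly why one must retain the whole exponentially large pool of $e^{\beta_{\ell}t}$ independent sub--BBMs rather than use a single one, since this is what makes the failure probabilities summable in $t$. Everything else --- verifying $\lambda/(s-\hat{\delta})<\sqrt{2}$ from $\hat{\delta}<\epsilon/\sqrt{2}$, confining the sub--BBMs to an obstacle--free corridor via \eqref{tube restriction}, and the elementary interpolation in $t$ --- is routine given the estimates collected in Section~\ref{Preparatory estimates and notation}.
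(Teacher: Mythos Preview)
Your argument is correct, but considerably more elaborate than the paper's. The paper dispatches the lemma in three lines: it bounds $\mathbb{P}(\mathcal{H}\mid\mathcal{B}_{\ell})$ from below by the corresponding probability for a \emph{single} standard BBM run for the remaining time $(s-\hat\delta)t$, needing to climb only $(h^{*}-\epsilon-\delta)t$, and then invokes Bramson's almost sure law $\max_{k}\hat X_{k}(T)/T\to\sqrt{2}$, which directly yields the ``$\exists t_{0}\,\forall t>t_{0}$'' conclusion because $(h^{*}-\epsilon-\delta)/(s-\hat\delta)<\sqrt{2}$ by the choice $\hat\delta<\epsilon/\sqrt{2}$.

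By contrast you keep the whole pool of $e^{\beta_{\ell}t}$ independent sub--BBMs, extract a per--$n$ failure probability $(1-q_{n})^{e^{\beta_{\ell}n}}$, and run Borel--Cantelli plus an interpolation in $t$. This is fine, but your assertion that ``one \emph{must} retain the whole exponentially large pool rather than use a single one'' is overstated: the almost sure statement for one standard BBM already controls all large time horizons simultaneously, which is exactly what the paper exploits. On the other hand, your corridor argument---confining the sub--BBM to a tube that stays above $\sum_{i}(a_{i}+b_{i})t$ so that it genuinely coincides with standard BBM---makes explicit a coupling step that the paper hides inside the phrase ``by Lemma~\ref{lemma for branching areas}, we can bound $\mathbb{P}(\mathcal{H}\mid\mathcal{B}_{\ell})$ by considering a BBM without obstacles''; this is a real gain in clarity even if the paper's compressed version is ultimately justifiable by the same tube estimate.
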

	\begin{proof}
	By Lemma \ref{lemma for branching areas}, we can bound $\mathbb{P}\left(\mathcal{H}\middle|\mathcal{B}_{\ell}\right)$ by considering a BBM without obstacles. We have
		\begin{multline}\label{lower bound for P(mathcal H|B ell)}
			\mathbb{P}\left(\mathcal{H}\middle|\mathcal{B}_{\ell}\right)\geq \mathbb{P}\Bigg[\exists t_{0}\forall t>t_{0}\exists k\leq\hat{n}\left(\left(1-\sum_{i=1}^{\ell}(x_{i}^{*}+y_{i}^{*})-\hat{\delta}\right)t\right) : \\\widehat{X}_{k}\left(\left(1-\sum_{i=1}^{\ell}(x_{i}^{*}+y_{i}^{*})-\hat{\delta}\right)t\right)\geq\left(h^{*}-\epsilon-\delta\right)t\Bigg].
		\end{multline}
		Since we chose $\hat{\delta}<\epsilon/\sqrt{2}$, we have
		\begin{equation}
			\sqrt{2}\left(1-\sum_{i=1}^{\ell}(x_{i}^{*}+y_{i}^{*})-\hat{\delta}\right)>\sqrt{2}\left(1-\sum_{i=1}^{\ell}(x_{i}^{*}+y_{i}^{*})\right)-\epsilon-\delta.
		\end{equation}
		Hence, the r.h.s of \eqref{lower bound for P(mathcal H|B ell)} equals one by the tightness of the maximum of homogeneous BBM around $m(t)$, see \cite{B_C}.
	
	\end{proof}
	\begin{lem}\label{lemma P(Bm | Am)=1}
		Under the assumption of Proposition \ref{proposition lower bound}, we have $\mathbb{P}\left(\mathcal{B}_{m}\middle|\mathcal{A}_{m}\right)=1$ for $m=1,\!...,\ell$.
	\end{lem}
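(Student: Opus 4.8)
The plan is to show that, almost surely, $\mathcal{A}_m$ forces $\mathcal{B}_m$, which is precisely the assertion $\mathbb{P}(\mathcal{B}_m\mid\mathcal{A}_m)=1$. Write $T^A=\sum_{i=1}^{m-1}(x_i^*+y_i^*)t+x_m^*t+\hat\delta t$ and $T^B=T^A+y_m^*t=\sum_{i=1}^{m}(x_i^*+y_i^*)t+\hat\delta t$ for the two times entering the definitions of $\mathcal{A}_m$ and $\mathcal{B}_m$. The key structural observation is that the passage from $I_m^A$ (the top $\delta t$-slice of the $m$-th obstacle-free area, just below the $m$-th obstacle) to $I_m^B$ (a strip of the fixed width $C$ at height $\delta t$ above the $m$-th obstacle) happens entirely across the $m$-th obstacle, where branching is switched off; so I would simply discard all branching after time $T^A$ and argue by pure Gaussian spreading of the particles that are already present. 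To decouple the tail structure of $\mathcal{A}_m$ from the Markov property, I would work, for each fixed large value of the parameter $t$, with the $\mathcal{F}_{T^A}$-measurable slice $\mathcal{A}_m^{(t)}=\{\#\{k\le n(T^A):A_m^k\}\ge e^{\alpha_m t}\}$ and the analogous $\mathcal{B}_m^{(t)}=\{\#\{k\le n(T^B):B_m^k\}\ge e^{\beta_m t}\}$, noting $\mathcal{A}_m=\bigcup_{t_0}\bigcap_{t>t_0}\mathcal{A}_m^{(t)}$ and likewise for $\mathcal{B}_m$.

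On $\mathcal{A}_m^{(t)}$, I would fix $N=\lceil e^{\alpha_m t}\rceil$ particles lying in $I_m^A$ at time $T^A$ and, for each, follow a single genealogical line forward to time $T^B$; discarding branching, the positions of these $N$ descendants at time $T^B$ are, conditionally on $\mathcal{F}_{T^A}$, independent, each equal to its ancestor's position at $T^A$ plus an independent $\mathcal{N}(0,y_m^*t)$ increment. A line started at $x\in I_m^A$ ends in $I_m^B$ exactly when its increment falls into an interval of width $C$ whose left endpoint lies between $(b_m+\delta)t$ and $(b_m+2\delta)t$; by a Gaussian density estimate in the spirit of Lemma~\ref{lemma for obstacles}\,\eqref{lemma for obstacles 1} this probability is, uniformly over $x\in I_m^A$, bounded below by some $p_t\gtrsim e^{-(b_m+2\delta)^2t/(2y_m^*)}$, hence $p_t\ge e^{-(b_m+2\delta)^2t/(2y_m^*)}e^{-\gamma t/2}$ for all large $t$. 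Thus the number $Z_t$ of these descendants that land in $I_m^B$ dominates, given $\mathcal{F}_{T^A}$ and on $\mathcal{A}_m^{(t)}$, a $\mathrm{Bin}(N,p_t)$ variable with mean $Np_t\ge e^{(\alpha_m-(b_m+2\delta)^2/(2y_m^*)-\gamma/2)t}=e^{(\beta_m+\gamma/2)t}$. Since $\beta_m>0$ and $\gamma>0$ by the choice of $\delta,\gamma$ in the proof of Proposition~\ref{proposition lower bound}, one has $e^{\beta_m t}\le\tfrac12 e^{(\beta_m+\gamma/2)t}$ for all large $t$, so a multiplicative Chernoff bound gives $\mathbb{P}(Z_t<e^{\beta_m t}\mid\mathcal{F}_{T^A})\le\exp(-\tfrac18 e^{(\beta_m+\gamma/2)t})$ on $\mathcal{A}_m^{(t)}$; as $Z_t\ge e^{\beta_m t}$ forces $\mathcal{B}_m^{(t)}$, taking expectations yields
\begin{equation*}
\mathbb{P}\big((\mathcal{B}_m^{(t)})^c\cap\mathcal{A}_m^{(t)}\big)\le\exp\big(-\tfrac18 e^{(\beta_m+\gamma/2)t}\big)\qquad\text{for all large }t.
\end{equation*}

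Summing this over $t\in\mathbb{N}$ gives a convergent series, so by Borel--Cantelli, almost surely $\mathcal{A}_m^{(t)}\subseteq\mathcal{B}_m^{(t)}$ for every large integer $t$; a routine approximation step (enlarging $I_m^A$ and shrinking $I_m^B$ by sub-linear amounts, exploiting continuity of the paths and the slack $\hat\delta$, cf.\ \cite{ABK_G}) would upgrade this to all real $t$, giving $\mathcal{A}_m\subseteq\mathcal{B}_m$ up to a null set, i.e.\ $\mathbb{P}(\mathcal{B}_m\mid\mathcal{A}_m)=1$. I expect the only delicate point to be the bookkeeping of the conditioning: $\mathcal{A}_m$ is a tail-type event and cannot be placed inside $\mathcal{F}_{T^A}$, so the Chernoff step has to be run on the slices $\mathcal{A}_m^{(t)}$ and then reassembled via Borel--Cantelli, together with the integer-to-real interpolation and the uniformity of the single-line crossing estimate over $x\in I_m^A$. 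The probabilistic core---concentration of a sum of conditionally independent Bernoulli variables of exponentially large mean---and the Gaussian estimate itself are both routine.
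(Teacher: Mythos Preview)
Your proposal is correct and follows essentially the same route as the paper: discard branching over the obstacle, reduce to a sum of $e^{\alpha_m t}$ independent Bernoulli indicators coming from Gaussian increments of variance $y_m^* t$, prove concentration above $e^{\beta_m t}$, and conclude via Borel--Cantelli plus an integer-to-real approximation. The one substantive difference is the concentration tool: the paper computes first and second moments and applies Paley--Zygmund to obtain an exponentially small failure probability $e^{-C_6 t}$, whereas you invoke a multiplicative Chernoff bound and get a doubly-exponential bound $\exp(-\tfrac18 e^{(\beta_m+\gamma/2)t})$. Your route is slightly cleaner (no second-moment computation, no juggling of the inequality corresponding to the paper's \eqref{paley zygmund obstacles bounded by exp(-C6t)}), at the cost of needing the conditional independence of the Bernoulli summands, which you have here since the increments are genuinely independent once branching is discarded. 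Your explicit decomposition into slices $\mathcal{A}_m^{(t)}$, $\mathcal{B}_m^{(t)}$ and the remark that $\mathcal{A}_m$ is not $\mathcal{F}_{T^A}$-measurable is a useful clarification that the paper leaves implicit.
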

	\begin{proof}
		We show that
		\begin{equation}\label{propability that there are too few particles above obstacle_lower bound}
			1-\mathbb{P}\left(\#\left\{k\leq n\left(\sum_{i=1}^{m}(x_{i}^{*}+y_{i}^{*})t+\hat{\delta}t\right) : B_{m}^{k} \right\}\geq e^{\beta_{m}t}\middle|\mathcal{A}_{m}\right)
		\end{equation}
		is integrable with respect to $t$. This implies $\mathbb{P}\left(\mathcal{B}_{m}\middle|\mathcal{A}_{m}\right)=1$ for $m=1,\!...,\ell$ by the Borel-Cantelli Lemma and approximation arguments (see e.g. \cite{ABK_G}). 
		
		For $i=1,\!..., e^{\alpha_{m}t}$ with w.l.o.g. $e^{\alpha_{m}t}\in \mathbb{N}$, we define some independent Gaussian random variables $Y_{i}\sim \mathcal{N}\left(0,y_{m}^{*}t\right)$. By monotonicity, we can ignore possible branching and ask how many of the Gaussian random variables are in $\tilde{I}_{m}^{B}=[(b_{m}+2\delta)t,(b_{m}+2\delta)t+C]$. I.e. we bound \eqref{propability that there are too few particles above obstacle_lower bound} from above by 
		\begin{equation}\label{Gaussians for obstacles}
			1-\mathbb{P}\left(\sum_{i=1}^{e^{\alpha_{m}t}}\mathbbm{1}_{Y_{i}\in\tilde{I}_{m}^{B}}\geq e^{\beta_{m}+\gamma t-\gamma t}\right). 
		\end{equation}
		To apply the Paley-Zygmund inequality, we compute the expectation and the second moment. By Lemma \ref{lemma for obstacles}, we get
		\begin{equation}
			\mathbb{E}\left[\sum_{i=1}^{e^{\alpha_{m}t}}\mathbbm{1}_{Y_{i}\in\tilde{I}_{m}^{B}}\right]\approx \exp\left({\alpha_{m}t-\frac{(b_{m}+2\delta)^{2}t}{2y_{m}^{*}}}\right)=e^{\beta_{m}+\gamma t}.
		\end{equation}
		By Lemma \ref{lemma for obstacles} and the independence of $Y_{i}$, we have
		\begin{align}
			\mathbb{E}\left[\left(\sum_{i=1}^{e^{\alpha_{m}t}}\mathbbm{1}_{Y_{i}\in\tilde{I}_{m}^{B}}\right)^{2}\right]=\mathbb{E}\left[\sum_{i=1}^{e^{\alpha_{m}t}}\sum_{j=1}^{e^{\alpha_{m}t}}\mathbbm{1}_{Y_{i}\in\tilde{I}_{m}^{B}}\mathbbm{1}_{Y_{j}\in\tilde{I}_{m}^{B}}\right]
			\\\label{second moment obstacles}\approx e^{\beta_{m}+\gamma t}+e^{2(\beta_{m}+\gamma t)}
			-\exp\left({\alpha_{m}t-2\frac{(b_{m}+2\delta)^{2}t}{2y_{m}^{*}}}\right),
		\end{align}
		where the first summand of \eqref{second moment obstacles} considers the diagonal and the second and third summands consider all the other terms. Hence, by the Paley-Zygmund inequality, we can bound \eqref{Gaussians for obstacles} from above by
		\begin{equation}\label{paley zygmund obstacles}
			1-\mathbb{P}\left(\sum_{i=1}^{e^{\alpha_{m}t}}\mathbbm{1}_{Y_{i}\in\tilde{I}_{m}^{B}}\geq e^{\beta_{m}+\gamma t-\gamma t}\right)\leq1-\left(1-e^{-\gamma t}\right)^{2}\frac{e^{2(\beta_{m}+\gamma)t}}{e^{(\beta_{m}+\gamma) t}+e^{2(\beta_{m}+\gamma)t}}.
		\end{equation}
		Let $C_{6}>0$. The r.h.s of \eqref{paley zygmund obstacles} is smaller than $e^{-C_{6}t}$ if and only if
		\begin{equation}\label{paley zygmund obstacles bounded by exp(-C6t)}
			1-e^{-C_{6}t}+e^{-(\beta_{m}+\gamma)t}-e^{-(\beta_{m}+\gamma+C_{6})t}<1+e^{-2\gamma t}-2e^{-\gamma t}.
		\end{equation}
		By definition of $\beta_{m}>0$, we can choose $\gamma>0$ so small that still $\beta_{m}>0$ but also $\beta_{m}+\gamma>2\gamma$ for all $m=1,\!...,\ell-1$. Afterwards, we choose $C_{6}>0$ such that $C_{6}<\gamma$. Then \eqref{paley zygmund obstacles bounded by exp(-C6t)} is indeed true and we can bound \eqref{propability that there are too few particles above obstacle_lower bound} from above by $e^{-C_{6}t}$. 
	\end{proof}
	\begin{lem}\label{lemma P(Am |Bm-1)=1}
		Under the assumption of Proposition \ref{proposition lower bound}, we have $\mathbb{P}\left(\mathcal{A}_{m}\middle|\mathcal{B}_{m-1}\right)=1$ for $m=2,\!...,\ell$.
	\end{lem}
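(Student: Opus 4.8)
To prove Lemma~\ref{lemma P(Am |Bm-1)=1}, the plan is to carry out, in the branching areas, the analogue of the argument used for Lemma~\ref{lemma P(Bm | Am)=1} in the obstacles, with the elementary Gaussian input replaced by Lemma~\ref{lemma for branching areas}. Throughout, write $T=T(t):=\sum_{i=1}^{m-1}(x_{i}^{*}+y_{i}^{*})t+\hat{\delta}t$ for the time at which $B_{m-1}^{k}$ is evaluated, and let $\mathcal{F}_{T}$ be the natural filtration of the process at time $T$. A one–line manipulation of the definitions of $\alpha_{m},\beta_{m-1}$ (in which the $\hat{\delta}$–terms cancel, since $m\ge2$) gives the identity
\begin{equation*}
\alpha_{m}=\beta_{m-1}+x_{m}^{*}-\frac{(a_{m}-\delta)^{2}}{2x_{m}^{*}}-\gamma,
\end{equation*}
encoding the heuristic ``start from $e^{\beta_{m-1}t}$ particles, branch for time $x_{m}^{*}t$ inside the branching area, pay the Gaussian cost of a displacement $\approx(a_{m}-\delta)t$, keep an $e^{-\gamma t}$ margin''. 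As in Lemma~\ref{lemma P(Bm | Am)=1}, conditioning on $\mathcal{F}_{T}$ and using the Markov property, the Borel--Cantelli lemma and the same integer–to–real approximation (see \cite{ABK_G}), it is enough to find $c>0$ with, for all large $t$,
\begin{equation}\label{planned goal for Am}
\mathbbm{1}_{G_{t}}\,\mathbb{P}\!\left(\#\big\{k\le n(T+x_{m}^{*}t):A_{m}^{k}\big\}<e^{\alpha_{m}t}\ \big|\ \mathcal{F}_{T}\right)\le e^{-c\,e^{\beta_{m-1}t}},
\end{equation}
where $G_{t}=\{\#\{k:B_{m-1}^{k}\text{ at time }T\}\ge e^{\beta_{m-1}t}\}\in\mathcal{F}_{T}$: the right–hand side of \eqref{planned goal for Am} is summable in $t$, while on $\mathcal{B}_{m-1}$ the event $G_{t}$ holds for all large $t$, so \eqref{planned goal for Am} yields $\mathbb{P}(\mathcal{A}_{m}\mid\mathcal{B}_{m-1})=1$.

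On $G_{t}$ there are, at time $T$, at least $e^{\beta_{m-1}t}$ particles in $I_{m-1}^{B}$, i.e.\ at height $\approx\sum_{i=1}^{m-1}(a_{i}+b_{i})t+\delta t$, which is at the bottom of the $m$–th branching area. By the branching property these particles generate, conditionally on $\mathcal{F}_{T}$, independent copies of the process. For a single such sub–process I would look, in displacement coordinates over $[T,T+x_{m}^{*}t]$ (relative to the starting height, a harmless $O(1)$ shift), only at descendants whose entire ancestral line stays in a sufficiently thin tube around the line of slope $a/x_{m}^{*}$ for some fixed $a\in(a_{m}-2\delta,a_{m}-\delta)$. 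One checks, for $\delta$ small and $t$ large, that this tube is contained in the $m$–th branching area and that its terminal end lies strictly inside $I_{m}^{A}$; hence along such lineages the obstacle–BBM coincides with a standard rate–$1$ BBM, and the number of such descendants is bounded below by $\widehat{Z}_{a}(x_{m}^{*}t)-\widehat{Z}_{a}^{>}(x_{m}^{*}t)-\widehat{Z}_{a}^{<}(x_{m}^{*}t)$ of a standard BBM (with a suitable tube parameter). Since $f(\tilde{c}_{i})>1/\sqrt{2}$ we have $a/x_{m}^{*}<a_{m}/x_{m}^{*}=1/f(\tilde{c}_{i})<\sqrt{2}$, so Lemma~\ref{lemma for branching areas} applies: by \eqref{expectation branching areas}, $\mathbb{E}[\widehat{Z}_{a}(x_{m}^{*}t)]\approx e^{x_{m}^{*}t-a^{2}t/(2x_{m}^{*})}$; by \eqref{result GKS2018}, $\widehat{Z}_{a}(x_{m}^{*}t)/\mathbb{E}[\widehat{Z}_{a}(x_{m}^{*}t)]\to M_{a}>0$ almost surely; and by \eqref{tube restriction} the tube–exit counts are at most $\mathbb{E}[\widehat{Z}_{a}(x_{m}^{*}t)]e^{-\gamma't}$ outside an event of probability $\lesssim e^{-C_{1}t}$. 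Fixing $\varepsilon>0$ with $\mathbb{P}(M_{a}\ge2\varepsilon)\ge3/4$, these three facts give a constant $p_{0}>0$ (independent of $t$) such that, for all large $t$, a single sub–BBM produces at least $\varepsilon\,e^{x_{m}^{*}t-a^{2}t/(2x_{m}^{*})}$ descendants lying in $I_{m}^{A}$ and never leaving the branching area, with probability at least $p_{0}$.

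Then I would combine the $\ge e^{\beta_{m-1}t}$ independent sub–BBMs: conditionally on $\mathcal{F}_{T}$ and on $G_{t}$, the number of them that are ``successful'' in the above sense dominates a $\mathrm{Binomial}(e^{\beta_{m-1}t},p_{0})$, hence is at least $\tfrac{p_{0}}{2}e^{\beta_{m-1}t}$ outside an event of conditional probability at most $e^{-c\,e^{\beta_{m-1}t}}$. On its complement, summing the successful sub–BBMs,
\begin{equation*}
\#\big\{k\le n(T+x_{m}^{*}t):A_{m}^{k}\big\}\ \ge\ \frac{p_{0}\varepsilon}{2}\,e^{\beta_{m-1}t}\,e^{x_{m}^{*}t-a^{2}t/(2x_{m}^{*})}\ \ge\ \frac{p_{0}\varepsilon}{2}\,e^{(\alpha_{m}+\gamma)t}\ \ge\ e^{\alpha_{m}t}
\end{equation*}
for $t$ large, where the middle inequality uses $a<a_{m}-\delta$ (so $a^{2}<(a_{m}-\delta)^{2}$) together with the displayed identity for $\alpha_{m}$, and the last one absorbs the constant into $e^{\gamma t}$. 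This is precisely \eqref{planned goal for Am}.

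The main obstacle is the per–sub–BBM estimate of the second paragraph. One genuinely has to count an entire level set of one sub–BBM — a single ``successful line'' does not suffice, because when $\beta_{m-1}>0$ the target $e^{\alpha_{m}t}$ exceeds the typical output of one sub–BBM by an exponential factor — while simultaneously forcing every counted lineage to remain inside the branching area, which is exactly why one must use the tube restriction \eqref{tube restriction} and a slope strictly below the one reaching the top of the branching area; and one must obtain the success probability $p_{0}$ uniformly in $t$, which is what lets the Chernoff bound over the $e^{\beta_{m-1}t}$ independent sub–BBMs upgrade the failure probability to the super-exponentially small form required by Borel--Cantelli. The remaining ingredients (expressing the conditioning on $\mathcal{B}_{m-1}$ through the Markov property at the deterministic time $T$, and the passage from integer to real $t$) are routine and handled exactly as in Lemma~\ref{lemma P(Bm | Am)=1}.
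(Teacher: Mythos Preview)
Your argument is correct and follows the same skeleton as the paper: condition on $\mathcal{B}_{m-1}$, use the Markov/branching property to obtain $e^{\beta_{m-1}t}$ independent sub-BBMs, invoke Lemma~\ref{lemma for branching areas} to control the output of a single sub-BBM, and then aggregate. The differences are tactical rather than conceptual. The paper defines a success event $\mathcal{Y}_i$ whose probability tends to $1$ (via \eqref{result GKS2018}) and aggregates with Paley--Zygmund, obtaining a bound of order $e^{-C_7 t}$; you instead fix a uniform success probability $p_0>0$ and aggregate with a binomial/Chernoff bound, obtaining the stronger (but unnecessary) super-exponential rate $e^{-c\,e^{\beta_{m-1}t}}$. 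You are also more explicit than the paper about why a single sub-BBM can be compared to a standard BBM: you choose a slope $a\in(a_m-2\delta,a_m-\delta)$ and a small tube parameter so that the tube sits strictly inside the $m$-th branching area with terminal window inside $I_m^A$, and you use \eqref{tube restriction} to discard tube-exiting lineages. The paper compresses this into the phrase ``by Lemma~\ref{lemma for branching areas}''. Both routes work; yours is a bit more self-contained at the cost of an extra choice of parameters.
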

	\begin{proof}
		This proof is similar to the one of Lemma \ref{lemma P(Bm | Am)=1}. We want to show that 
		\begin{equation}\label{propability that there are too few particles above branching area_lower bound}
			1-\mathbb{P}\left(\#\left\{k\leq n\left(\sum_{i=1}^{m-1}(x_{i}^{*}+y_{i}^{*})t+x_{m}^{*}t +\hat{\delta}t\right) : A_{m}^{k} \right\}\geq e^{\alpha_{m}t}\middle|\mathcal{B}_{m-1}\right)
		\end{equation} 
		is integrable with respect to $t$. This implies $\mathbb{P}\left(\mathcal{A}_{m}\middle|\mathcal{B}_{m-1}\right)=1$ for $m=2,\!...,\ell$ by the Borel-Cantelli Lemma and approximation arguments (see e.g. \cite{ABK_G}). 
		
		We look at $e^{\beta_{m-1}t}$ independent BBMs that start in $0$ without obstacles. Denote by $\mathcal{Y}_{i}$ the event that the $i$-th BBM has at least $ \exp(x_{m}^{*}t-(a_{m}-\delta)^{2}t/(2x_{m}^{*})-\gamma t/3)$ particles in $[(a_{m}-\delta)t,a_{m}t]$ at time $x_{m}^{*}t$. By Lemma \ref{lemma for branching areas}, we can bound \eqref{propability that there are too few particles above branching area_lower bound} from above by
		\begin{equation}\label{3.68}
			1-\mathbb{P}\left(\sum_{i=1}^{ e^{\beta_{m-1}t}}\mathbbm{1}_{\mathcal{Y}_{i}}\geq e^{(\beta_{m-1}-2\gamma/3)t}\right).
		\end{equation}
		Since, by Lemma \ref{lemma for branching areas}, $\mathbb{P}(\mathcal{Y}_{i})\to 1$, as $t\to \infty$, it can be bounded from below by $\exp(-\gamma t/3)$ for large enough $t$. Analogously to \eqref{paley zygmund obstacles}, we can bound the probability in \eqref{3.68} from below by 
		\begin{align}
		& \mathbb{P}\left(\sum_{i=1}^{ e^{\beta_{m-1}t}}\mathbbm{1}_{\mathcal{Y}_{i}}\geq \mathbb{P}\left(\mathcal{Y}_{i}\right)e^{(\beta_{m-1}-\gamma/3)t}\right)
		\label{paley zygmund branching areas}&\geq
		\left(1-e^{-\gamma t/3}\right)^{2}\frac{e^{2\beta_{m-1}t}\mathbb{P}\left(\mathcal{Y}_{i}\right)^{2}}{e^{\beta_{m-1}t}\mathbb{P}\left(\mathcal{Y}_{i}\right)+e^{2\beta_{m-1}t}\mathbb{P}\left(\mathcal{Y}_{i}\right)^{2}},
		\end{align}
	using Paley-Zygmunds inequality and the independence of the BBMs.	Let $C_{7}>0$. The expression \eqref{paley zygmund branching areas} is smaller than $e^{-C_{7}t}$ if and only if
		\begin{equation}\label{paley zygmund branching areas bounded by exp(-C7t)}
			e^{-\beta_{m-1}t}-e^{-(C_{7}+\beta_{m-1})t}-e^{-C_{7}t}\mathbb{P}(\mathcal{Y}_{i})\leq e^{-\frac{2}{3}\gamma t}\mathbb{P}(\mathcal{Y}_{i})-2e^{-\frac{\gamma}{3}t}\mathbb{P}(\mathcal{Y}_{i}).
		\end{equation}
		By definition of $\beta_{m-1}>0$, we can choose $\gamma>0$ so small that still $\beta_{m-1}>0$ but also $2\gamma/3<\beta_{m-1}$ for all $m=2,\!...,\ell$. Afterwards, we choose $C_{7}>0$ such that $C_{7}<\gamma/3$. Then \eqref{paley zygmund branching areas bounded by exp(-C7t)} is indeed true and we can bound \eqref{propability that there are too few particles above branching area_lower bound} from above by $e^{-C_{7}t}$. 
	\end{proof}
\appendix
\section{Differentiability of $x_{m}^{c}$ }
	In this appendix, we show differentiability of $x_{m}^{c}$ with respect to $c_{m}$. Differentiability of $x_{m}^{c}$ with respect to $c_{m-1}$ can be proved analogously. For $(c_{1},\!...,c_{\ell})\in D^{c}$, we define $\mathcal{I}(c_{m})=\{C\in \mathbb{R}:(c_{1},\!...,c_{m-1},C,c_{m+1},\!...,c_{\ell})\in D^{c}\}$. First, we show that $x_{m}(C)$ is continuous in $C$ on $\mathcal{I}(c_{m})$ and a simple root of some polynomial. Then we use the implicit function theorem to show differentiability in the interior of $\mathcal{I}(c_{m})$. We start with some auxiliary results.
	\begin{cor}\label{corollary Icm interval}
		For all $(c_{1},\!...,c_{\ell})\in D^{c}$, $\mathcal{I}(c_{m})$ is an interval.
	\end{cor}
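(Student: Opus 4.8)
The plan is to deduce this directly from the convexity of $D^{c}$ proved in Lemma \ref{lemma Dc convex}. Fix a point $(c_{1},\dots,c_{\ell-1})\in D^{c}$ (with the usual convention $c_{\ell}=0$). The first step is to recognize $\mathcal{I}(c_{m})$ as a coordinate slice of $D^{c}$: if $L$ denotes the affine line through $(c_{1},\dots,c_{\ell-1})$ parallel to the $m$-th coordinate axis, i.e.\ $L=\{(c_{1},\dots,c_{m-1},C,c_{m+1},\dots,c_{\ell-1}):C\in\mathbb{R}\}$, and $\pi_{m}\colon(\gamma_{1},\dots,\gamma_{\ell-1})\mapsto\gamma_{m}$ is the projection onto the $m$-th coordinate, then by definition $\mathcal{I}(c_{m})=\pi_{m}\bigl(D^{c}\cap L\bigr)$.

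The second step is the elementary topology: $L$ is convex, being an affine subspace, and $D^{c}$ is convex by Lemma \ref{lemma Dc convex}, so $D^{c}\cap L$ is convex; since $\pi_{m}$ is affine, its image $\mathcal{I}(c_{m})=\pi_{m}(D^{c}\cap L)$ is a convex subset of $\mathbb{R}$, hence an interval. Moreover $\mathcal{I}(c_{m})$ is nonempty, as $(c_{1},\dots,c_{\ell-1})\in D^{c}\cap L$ gives $c_{m}\in\mathcal{I}(c_{m})$. (Condition \eqref{cm domain1} shows in addition that this interval is contained in $[0,\infty)$, and the constraint $\sum_{i}(x_{i}+y_{i})\le 1$ implicit in \eqref{cm domain2} makes it bounded, though neither fact is needed for the statement.)

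I do not expect a genuine obstacle here. The only point worth flagging is that membership in $D^{c}$ is phrased through the non-emptiness of $\widetilde{D}$ (condition \eqref{cm domain2}), which on its face does not make $D^{c}$ transparently well behaved; but this is exactly what Lemma \ref{lemma Dc convex} already settles, so the corollary is immediate. Should one wish to avoid invoking full convexity, an equivalent route is to check ``order-convexity in $C$'' of $D^{c}\cap L$ by hand, interpolating the witnesses in $\widetilde{D}$ with the explicit choice of $y_{m}$ from \eqref{convex combi ym}; this, however, merely re-derives the relevant special case of Lemma \ref{lemma Dc convex}, so the shorter argument above is preferable.
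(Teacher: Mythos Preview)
Your proof is correct and follows the same approach as the paper, which simply states that the claim follows from Lemma~\ref{lemma Dc convex}. You have spelled out the details (slice of a convex set, affine image), but the underlying idea is identical.
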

	\begin{proof}
		The claim follows from Lemma \ref{lemma Dc convex}.
	\end{proof}
	\begin{lem}\label{lemma Dmx(C) closed interval}
		For all $C\in \mathcal{I}(c_{m})$, the domain $D^{m}_{x}(c_{m-1},C)$ is a closed interval.
	\end{lem}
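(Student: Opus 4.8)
The plan is to realise $D^m_x(c_{m-1},C)$ as a sublevel set of the one–variable function \eqref{plugged in formula for ym in minimization problem} and then to exploit the strict convexity already established in the proof of Proposition \ref{proposition optimal xmc existence and uniqueness} via \eqref{second derivative BEOx}. First I would parametrise by $x_m$: for fixed $x_m>0$, the equality constraint \eqref{xmc domain1} forces $y_m=\frac{b_m^2}{2\left(c_{m-1}+x_m-a_m^2/(2x_m)-C\right)}$ by \eqref{formula for ym in minimization problem}, and this value of $y_m$ is strictly positive if and only if $c_{m-1}+x_m-\frac{a_m^2}{2x_m}>C$. Since $x\mapsto c_{m-1}+x-\frac{a_m^2}{2x}$ is strictly increasing on $(0,\infty)$ with range all of $\mathbb{R}$, there is a unique $x_0>0$ at which equality holds, and \eqref{xmc domain4} together with \eqref{xmc domain1} is satisfiable precisely for $x_m$ in the open interval $(x_0,\infty)$. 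On this interval the remaining constraint \eqref{xmc domain3}, $x_m+y_m\le N$, reads $h(x_m)\le N$, where $h$ is the function \eqref{plugged in formula for ym in minimization problem} with $c_m$ replaced by $C$. Hence $D^m_x(c_{m-1},C)=\{x\in(x_0,\infty):h(x)\le N\}$.

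The next step is to record the qualitative behaviour of $h$ on $(x_0,\infty)$: it is strictly convex there by the computation \eqref{second derivative BEOx} (which used nothing about $c_m$ beyond the fact that the denominator is positive, so it applies verbatim with $c_m$ replaced by $C$), and $h(x)\to+\infty$ both as $x\downarrow x_0$, because the denominator of the second summand tends to $0^{+}$, and as $x\to\infty$, because the first summand diverges while the second stays positive. Consequently any sublevel set $\{h\le N\}$ is bounded and bounded away from $x_0$; being the preimage of a closed set under the continuous function $h$ restricted to $(x_0,\infty)$, and staying inside this open interval, it is closed in $\mathbb{R}$, and by convexity of $h$ it is convex, hence a closed interval (possibly degenerate to a single point). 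It remains to check nonemptiness: the hypothesis $C\in\mathcal{I}(c_m)$ means $(c_1,\dots,c_{m-1},C,c_{m+1},\dots,c_\ell)\in D^c$, so by \eqref{cm domain2} the corresponding set $\widetilde D$ is nonempty, and the $m$-th pair of coordinates of any of its elements lies in $D^m(c_{m-1},C)$, giving a point of $D^m_x(c_{m-1},C)$.

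I do not expect a genuine obstacle here; the two points needing a little care are that the sublevel set really stays inside the open domain $(x_0,\infty)$, which is handled by the blow-up of $h$ at $x_0$, and that invoking \eqref{second derivative BEOx} with the parameter $C$ in place of $c_m$ is legitimate, which is immediate from inspection of that formula. Putting these together yields that $D^m_x(c_{m-1},C)$ is a nonempty closed interval, which is the claim.
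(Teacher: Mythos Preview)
Your argument is correct and takes a genuinely different route from the paper. The paper treats connectivity and closedness separately: connectivity is obtained by mimicking the convex-combination construction from the proof of Lemma~\ref{lemma Dc convex}, and closedness is proved by a bare-hands sequential argument, taking $x_m^i\to\hat x$ in $D^m_x(c_{m-1},C)$ and arguing case by case that neither $\hat x$ nor the induced $\hat y$ can vanish (using that $x_m^i\le N$ cannot compensate $a_m^2/(2x_m^i)\to\infty$ or $b_m^2/(2y_m^i)\to\infty$ in the equality \eqref{xmc domain1}), so that the constraints pass to the limit. Your approach instead identifies $D^m_x(c_{m-1},C)$ in one stroke as the sublevel set $\{h\le N\}$ of the strictly convex function $h$ from \eqref{plugged in formula for ym in minimization problem} on the ray $(x_0,\infty)$, and then reads off convexity, boundedness and closedness from the already-established second-derivative computation \eqref{second derivative BEOx} together with the boundary blow-up of $h$. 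This is more economical because it recycles work from Proposition~\ref{proposition optimal xmc existence and uniqueness} rather than importing a separate connectivity argument; the paper's version, on the other hand, avoids any appeal to convexity of $h$ and is slightly more self-contained at this spot.
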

	\begin{proof}
		Connectivity of $D^{m}_{x}(c_{m-1},C)$ can be proved analogously to the proof of Lemma \ref{lemma Dc convex}. To show that $D^{m}_{x}(c_{m-1},C)$ is closed, let $x_{m}^{i}\in D^{m}_{x}(c_{m-1},C)$ be some convergent sequence. We have
		\begin{align}
			\label{xmci domain1}
			&c_{m-1}+x_{m}^{i}-\frac{a_{m}^{2}}{2x_{m}^{i}}-\frac{b_{m}^{2}}{2y_{m}^{i}}=C,
			\\\label{xmci domain3}&x_{m}^{i}+y_{m}^{i}\leq N,
			\\\label{xmci domain4}&x_{m}^{i}> 0 \text{ and } y_{m}^{i}> 0
		\end{align}
		with
		\begin{equation}\label{ym(i)}
			y_{m}^{i}=\frac{b_{m}^{2}}{2\left(c_{m-1}-C+x_{m}^{i}-\frac{a_{m}^{2}}{2x_{m}^{i}}\right)}.
		\end{equation} 
		Let $\hat{x}=\lim\limits_{i\to\infty}x_{m}^{i}$. By \eqref{xmci domain1}, it is not possible that $\hat{x}=0$ because $x_{m}^{i}\leq N$ is not able to compensate $a_{m}^{2}/(2x_{m}^{i})\to\infty$. Furthermore, the denominator of \eqref{ym(i)} can not converge to $0$ because $y_{m}^{i}\leq N$. Hence, $\hat{y}=\lim\limits_{i\to\infty}y_{m}^{i}$ is well defined. We have to show
		\begin{align}
			\label{xmclim domain1}
			&c_{m-1}+\hat{x}-\frac{a_{m}^{2}}{2\hat{x}}-\frac{b_{m}^{2}}{2\hat{y}}=C,
			\\\label{xmclim domain3}&\hat{x}+\hat{y}\leq N,
			\\\label{xmclim domain4}&\hat{x}> 0 \text{ and } \hat{y}> 0.
		\end{align}
		By \eqref{xmci domain1}, it is also not possible that $\hat{y}=0$ because $x_{m}^{i}\leq N$ is not able to compensate $b_{m}^{2}/(2y_{m}^{i})\to\infty$. Hence, \eqref{xmclim domain4} holds. The conditions \eqref{xmclim domain1} and \eqref{xmclim domain3} follow from \eqref{xmci domain1} and \eqref{xmci domain3} by continuity. Hence, $\hat{x}\in D^{m}_{x}(c_{m-1},C)$ and $D^{m}_{x}(c_{m-1},C)$ is closed.
	\end{proof}
	\begin{lem}\label{lemma Dmx(C) continuous in C}
		For all $\hat{C}\in \mathcal{I}(c_{m})$, all $\hat{x}\in D^{m}_{x}(c_{m-1},\hat{C})$ and all $\epsilon>0$ such that $\{x\in \mathbb{R}:|x-\hat{x}|<2\epsilon\}\subset D^{m}_{x}(c_{m-1},\hat{C})$, there exists $\delta>0$ such that for all $C\in \mathcal{I}(c_{m})$ with $|C-\hat{C}|<\delta$ we have $\{x\in \mathbb{R}:|x-\hat{x}|<\epsilon\}\subset D^{m}_{x}(c_{m-1},C)$.
	\end{lem}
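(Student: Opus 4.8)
The plan is to use the explicit parametrisation of $D^{m}_{x}(c_{m-1},C)$ coming from \eqref{formula for ym in minimization problem}. Writing $g_{C}(x):=c_{m-1}+x-\tfrac{a_{m}^{2}}{2x}-C$ for $x>0$, and $\phi_{C}(x):=x+\tfrac{b_{m}^{2}}{2g_{C}(x)}$ wherever $g_{C}(x)>0$, one has $x\in D^{m}_{x}(c_{m-1},C)$ precisely when $x>0$, $g_{C}(x)>0$ and $\phi_{C}(x)\le N$ (this is exactly how \eqref{plugged in formula for ym in minimization problem} was derived from \eqref{xmc domain1}--\eqref{xmc domain4}). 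The key structural point is that $C$ enters only as an additive shift, $g_{C}=g_{\hat{C}}+(\hat{C}-C)$, so that $\phi_{C}-\phi_{\hat{C}}=\tfrac{b_{m}^{2}}{2g_{C}}-\tfrac{b_{m}^{2}}{2g_{\hat{C}}}$ (the linear terms cancel), which is Lipschitz in $C$ as soon as $g$ is bounded away from $0$. The proof then amounts to checking the three defining conditions on the smaller ball $\{|x-\hat{x}|<\epsilon\}$ and showing each is stable under a sufficiently small change of $C$.

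First I would settle positivity: since $\{|x-\hat{x}|<2\epsilon\}\subset D^{m}_{x}(c_{m-1},\hat{C})\subset(0,\infty)$ we have $\hat{x}-2\epsilon\ge0$, so $x>0$ holds for every $x$ with $|x-\hat{x}|<\epsilon$, independently of $C$. Next fix the compact set $K:=[\hat{x}-\tfrac32\epsilon,\hat{x}+\tfrac32\epsilon]$, contained in the open $2\epsilon$-ball; then $\mu:=\min_{x\in K}g_{\hat{C}}(x)>0$, and from $g_{C}=g_{\hat{C}}+(\hat{C}-C)$ we get $g_{C}\ge\mu/2$ on $K$ whenever $|C-\hat{C}|<\mu/2$, which handles the condition $g_{C}>0$ on the $\epsilon$-ball. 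For $\phi_{C}\le N$, I would first observe that $\phi_{\hat{C}}<N$ \emph{strictly} on the open $2\epsilon$-ball: $\phi_{\hat{C}}$ is strictly convex on $\{g_{\hat{C}}>0\}$ because its second derivative \eqref{second derivative BEOx} is strictly positive, and a strictly convex function does not attain a maximum at an interior point of an interval, so $\phi_{\hat{C}}\le N$ on the open ball forces strict inequality there; hence $\eta:=N-\max_{x\in K}\phi_{\hat{C}}(x)>0$. On $K$, using $|1/g_{C}-1/g_{\hat{C}}|\le 4|g_{C}-g_{\hat{C}}|/\mu^{2}=4|\hat{C}-C|/\mu^{2}$ one gets $|\phi_{C}-\phi_{\hat{C}}|\le \tfrac{2b_{m}^{2}}{\mu^{2}}|\hat{C}-C|$ on $K$. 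Thus, with $\delta:=\min\{\mu/2,\ \mu^{2}\eta/(2b_{m}^{2})\}$, every $C$ with $|C-\hat{C}|<\delta$ satisfies $\phi_{C}<\phi_{\hat{C}}+\eta\le N$ throughout $K$, in particular on $\{|x-\hat{x}|<\epsilon\}$. Combining the three conditions gives $\{|x-\hat{x}|<\epsilon\}\subset D^{m}_{x}(c_{m-1},C)$.

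I do not anticipate a genuine difficulty here; the one point that requires a little care is the strict inequality $\phi_{\hat{C}}<N$ on the open $2\epsilon$-ball, which produces the buffer $\eta$ that absorbs the perturbation in $C$. This is exactly where the strict convexity from \eqref{plugged in formula for ym in minimization problem}--\eqref{second derivative BEOx} enters (equivalently, one may invoke Lemma \ref{lemma Dmx(C) closed interval} together with the fact, visible from its proof, that $\phi_{\hat{C}}$ equals $N$ only at the two endpoints of the closed interval $D^{m}_{x}(c_{m-1},\hat{C})$, which lie outside the open $2\epsilon$-ball). Everything else is a routine compactness and uniform-continuity argument, and the resulting $\delta$ is completely explicit in terms of $\mu$, $\eta$ and $b_{m}$.
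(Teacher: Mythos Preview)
Your proposal is correct and follows essentially the same approach as the paper: both proofs characterise $D^{m}_{x}(c_{m-1},C)$ via the three conditions $x>0$, $g_{C}(x)>0$, $\phi_{C}(x)\le N$, bound $g$ away from zero on a compact subinterval of the $2\epsilon$-ball, use strict convexity of $\phi_{\hat{C}}$ (i.e.\ \eqref{second derivative BEOx}) to obtain a strict buffer $\eta=N-\max\phi_{\hat{C}}>0$ on that compact set, and then transfer $\phi_{C}\le N$ via a Lipschitz estimate in $C$. The only cosmetic differences are that the paper first invokes Lemma~\ref{lemma Dmx(C) closed interval} to work on the closed $2\epsilon$-ball (parametrised by $\alpha\in[-2,2]$) whereas you work directly on $K=[\hat{x}-\tfrac32\epsilon,\hat{x}+\tfrac32\epsilon]$, and the paper bounds $\partial_{\delta}F_{1}$ explicitly while you use the algebraic identity $1/g_{C}-1/g_{\hat{C}}$; neither affects the substance.
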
 
	\begin{proof}
		Since $D^{m}_{x}(c_{m-1},\hat{C})$ is closed by Lemma \ref{lemma Dmx(C) closed interval}, we have $\{x\in \mathbb{R}:|x-\hat{x}|\leq2\epsilon\}\subset D^{m}_{x}(c_{m-1},\hat{C})$. Hence, we have for all $\alpha\in[-2,2]$,
		\begin{align}
			\label{xmhat domain1}
			&c_{m-1}+\hat{x}+\epsilon\alpha-\frac{a_{m}^{2}}{2(\hat{x}+\epsilon\alpha)}-\frac{b_{m}^{2}}{2y(\alpha)}=\hat{C},
			\\\label{xmhat domain3}&\hat{x}+\epsilon\alpha+y(\alpha)\leq N,
			\\\label{xmhat domain4}&\hat{x}+\epsilon\alpha> 0 \text{ and } y(\alpha)> 0
		\end{align}
		with
		\begin{equation}\label{y(alpha)}
			y(\alpha)=\frac{b_{m}^{2}}{2\left(c_{m-1}-\hat{C}+\hat{x}+\epsilon\alpha-\frac{a_{m}^{2}}{2(\hat{x}+\epsilon\alpha)}\right)}.
		\end{equation} 
		We have to show that there is some $\delta>0$ such that for all $\beta \in(-1,1)$ and all $\alpha\in(-1,1)$,
		\begin{align}
			\label{xmhat delta domain1}
			&c_{m-1}+\hat{x}+\epsilon\alpha-\frac{a_{m}^{2}}{2(\hat{x}+\epsilon\alpha)}-\frac{b_{m}^{2}}{2y(\alpha,\beta)}=\hat{C}+\delta\beta,
			\\\label{xmhat delta domain3}&\hat{x}+\epsilon\alpha+y(\alpha,\beta)\leq N,
			\\\label{xmhat delta domain4}&\hat{x}+\epsilon\alpha> 0 \text{ and } y(\alpha,\beta)> 0
		\end{align}
		with
		\begin{equation}\label{y(alpha,beta)}
			y(\alpha,\beta)=\frac{b_{m}^{2}}{2\left(c_{m-1}-\hat{C}-\delta\beta+\hat{x}+\epsilon\alpha-\frac{a_{m}^{2}}{2(\hat{x}+\epsilon\alpha)}\right)}.
		\end{equation} 
		For $\alpha\in[-2,2]$, we define $F_{0}(\alpha)=2\left(c_{m-1}-\hat{C}+\hat{x}+\epsilon\alpha-\frac{a_{m}^{2}}{2(\hat{x}+\epsilon\alpha)}\right)$ and $\alpha^{0}=\text{argmin}\{F_{0}(\alpha):\alpha\in[-2,2]\}$. Note that $\alpha^{0}$ is well defined because $F_{0}$ is strictly concave on $[-2,2]$ by \eqref{xmhat domain4}. By \eqref{xmhat domain4} and \eqref{y(alpha)}, we have $F_{0}(\alpha^{0})>0$. Let $\hat{\delta}>0$ be so small that also $F_{0}(\alpha^{0})-2\hat{\delta}>0$. Then we have
		\begin{align}
			\frac{b_{m}^{2}}{2\left(c_{m-1}-\hat{C}-\hat{\delta}\beta+\hat{x}+\epsilon\alpha-\frac{a_{m}^{2}}{2(\hat{x}+\epsilon\alpha)}\right)}\geq\frac{b_{m}^{2}}{F_{0}(\alpha^{0})-2\hat{\delta}}>0
		\end{align} 
		for all $\beta \in[-1,1]$ and all $\alpha\in[-2,2]$.
		
		For $\delta\in(0,\hat{\delta})$ and $\alpha\in[-2,2]$, we define 
		\begin{equation}
			F_{1}(\delta,\alpha)=\hat{x}+\epsilon\alpha+\frac{b_{m}^{2}}{2\left(c_{m-1}-\hat{C}-\delta+\hat{x}+\epsilon\alpha-\frac{a_{m}^{2}}{2(\hat{x}+\epsilon\alpha)}\right)}.
		\end{equation}
		By \eqref{second derivative BEOx} in the proof of Proposition \ref{proposition optimal xmc existence and uniqueness}, $F_{1}(0,\alpha)$ is strictly convex in $\alpha$ on $[-2,2]$. Hence, $\alpha^{1}=\text{argmax}\{F_{1}(0,\alpha):\alpha\in[-1,1]\}$ is well defined and satisfies $F_{1}(0,\alpha^{1})<F_{1}(0,2)\leq N$ or $F_{1}(0,\alpha^{1})<F_{1}(0,-2)\leq N$.
		
		The derivative of $F_{1}$ with respect to $\delta$ is equal to
		\begin{equation}
			\partial_{\delta}F_{1}(\delta,\alpha)=\frac{b_{m}^{2}}{2\left(c_{m-1}-\hat{C}-\delta+\hat{x}+\epsilon\alpha-\frac{a_{m}^{2}}{2(\hat{x}+\epsilon\alpha)}\right)^{2}}.
		\end{equation}
		We bound $\partial_{\delta}F_{1}(\delta,\alpha)$ from above by $L=\partial_{\delta}F_{1}(\hat{\delta},\alpha^{0})$. Then we have
		\begin{align}
			|F_{1}(\delta,\alpha)-F_{1}(0,\alpha)|\leq L\delta.
		\end{align}
		Hence, for $\delta<\min\{\hat{\delta},(N-F_{1}(0,\alpha^{1}))/L\}$, we have 
		\begin{align}
			\hat{x}+\epsilon\alpha+y(\alpha,\beta)&=\hat{x}+\epsilon\alpha+\frac{b_{m}^{2}}{2\left(c_{m-1}-\hat{C}-\delta\beta+\hat{x}+\epsilon\alpha-\frac{a_{m}^{2}}{2(\hat{x}+\epsilon\alpha)}\right)}\leq F_{1}(\delta,\alpha),
		\end{align}
	which is bounded from above by $ F_{1}(\delta,\alpha)
	\leq F_{1}(0,\alpha)+L\delta
	\leq N$, 	for all $\beta \in(-1,1)$ and all $\alpha\in(-1,1)$.
	\end{proof}
	Looking at the first order condition \eqref{BEOx} with $c_{m}$ replaced by $C$ is equivalent to looking at roots of the polynomial
	\begin{equation}\label{polynomial with C instead of cm}
		x^{4}+2(c_{m-1}-C)x^{3}+\left((c_{m-1}-C)^{2}-a_{m}^{2}-\frac{b_{m}^{2}}{2}\right)x^{2}-a_{m}^{2}(c_{m-1}-C)x+\frac{a_{m}^{2}}{4}(a_{m}^{2}-b_{m}^{2}).
	\end{equation}
	By Proposition \ref{proposition optimal xmc existence and uniqueness}, we have the following corollary.
	\begin{cor}\label{corollary xm(C) largest real root, only root in Dmx(C), not on boundary}
		For all $C\in \mathcal{I}(c_{m})$, $x_{m}(C)$ is the largest real root of \eqref{polynomial with C instead of cm}, the only root of \eqref{polynomial with C instead of cm} in $D^{m}_{x}(c_{m-1},C)$ and not a boundary point of $D^{m}_{x}(c_{m-1},C)$.
	\end{cor}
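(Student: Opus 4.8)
The plan is to read the corollary off from Proposition~\ref{proposition optimal xmc existence and uniqueness}, applied not to $(c_{1},\dots,c_{\ell-1})$ itself but to the perturbed index vector in which the $m$-th coordinate is replaced by $C$, combined with the purely algebraic observation that clearing denominators in the first order condition \eqref{BEOx} produces exactly the quartic \eqref{polynomial with C instead of cm}.

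Concretely, fix $C\in\mathcal{I}(c_m)$. By the very definition of $\mathcal{I}(c_m)$ the vector $(c_{1},\dots,c_{m-1},C,c_{m+1},\dots,c_{\ell})$ lies in $D^{c}$, so Proposition~\ref{proposition optimal xmc existence and uniqueness} applies to it. Since $x_{m}(C)$ is, by definition, the $m$-th branching-area coordinate of the unique minimizer attached to this vector, the proposition states that $x_{m}(C)$ is the largest real solution of \eqref{BEOx} with $c_{m}$ replaced by $C$, is the only solution of that equation lying in $D^{m}_{x}(c_{m-1},C)$, and is not a boundary point of $D^{m}_{x}(c_{m-1},C)$. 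The third of these is already the third assertion of the corollary.

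It remains to translate the statements about solutions of \eqref{BEOx} into statements about roots of \eqref{polynomial with C instead of cm}. Cross-multiplying \eqref{BEOx} (with $c_{m}\mapsto C$) and then multiplying by $x^{2}$ and rearranging produces exactly \eqref{polynomial with C instead of cm}; conversely, for $x\ne 0$ one divides \eqref{polynomial with C instead of cm} by $x^{2}$ and, using that $1+\tfrac{a_{m}^{2}}{2x^{2}}>0$, recovers \eqref{BEOx}. Hence on $\{x\ne 0\}$ the two equations have the same solution set. The only root of \eqref{polynomial with C instead of cm} that may fail to solve \eqref{BEOx} is $x=0$ (which is a root precisely when $a_{m}=b_{m}$); but $x_{m}(C)>0$, so this potential extra root lies strictly below $x_{m}(C)$ and does not disturb the claim that $x_{m}(C)$ is the \emph{largest} real root of \eqref{polynomial with C instead of cm}. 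Moreover $0\notin D^{m}_{x}(c_{m-1},C)$, since letting $x\to 0^{+}$ in \eqref{xmc domain1} with $c_{m}\mapsto C$ forces $a_{m}^{2}/(2x)\to\infty$, which that equality cannot accommodate; therefore the roots of \eqref{polynomial with C instead of cm} lying in $D^{m}_{x}(c_{m-1},C)$ coincide with the solutions of \eqref{BEOx} there, i.e.\ with $\{x_{m}(C)\}$. Combining these identifications with the conclusions of Proposition~\ref{proposition optimal xmc existence and uniqueness} yields all three assertions.

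I do not expect a genuine obstacle here: the substantive content is entirely contained in Proposition~\ref{proposition optimal xmc existence and uniqueness}. The only point requiring a little care is the bookkeeping of the spurious root $x=0$ introduced by clearing denominators, and this is harmless for the two reasons given above — it is dominated by the positive root $x_{m}(C)$, and it lies outside $D^{m}_{x}(c_{m-1},C)$.
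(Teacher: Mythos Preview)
Your proposal is correct and follows the same route as the paper, which simply records that the polynomial \eqref{polynomial with C instead of cm} is obtained from \eqref{BEOx} by clearing denominators and then invokes Proposition~\ref{proposition optimal xmc existence and uniqueness}. Your additional bookkeeping about the possible spurious root $x=0$ is more careful than the paper's one-line justification and is handled correctly.
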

	\begin{lem}\label{lemma four roots continuous in C}
		The four roots of \eqref{polynomial with C instead of cm} are continuous in $C$ on $\mathbb{R}$. 
	\end{lem}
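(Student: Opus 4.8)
The plan is to prove the statement via the classical principle that the roots of a monic polynomial depend continuously on its coefficients, and to supply the short argument-principle proof of that principle in exactly the form needed. The key observation is that in \eqref{polynomial with C instead of cm} the leading coefficient is identically $1$ while every other coefficient is a polynomial in $C$; in particular they are continuous in $C$ on all of $\mathbb{R}$ and the degree never drops. Write $p(z,C)$ for the polynomial \eqref{polynomial with C instead of cm}, viewed as a function of $(z,C)\in\mathbb{C}\times\mathbb{R}$; it is jointly continuous, in fact a polynomial in both variables.

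First I would establish Hausdorff continuity of the (unordered, multiplicity-counted) root set. Fix $C_{0}\in\mathbb{R}$ and let $z_{1},\dots,z_{r}$ be the distinct roots of $p(\cdot,C_{0})$, with multiplicities $k_{1},\dots,k_{r}$, $\sum_{j}k_{j}=4$. Given $\epsilon>0$ small enough that the closed discs $\overline{B_{\epsilon}(z_{j})}$ are pairwise disjoint, pick $R$ so large that these discs lie in $B_{R}(0)$, that $p(\cdot,C_{0})$ has no zero on the compact set $K=\overline{B_{R}(0)}\setminus\bigcup_{j}B_{\epsilon}(z_{j})$, and (using the Cauchy bound $|z|\le 1+\max_i|a_i(C)|$ on the moduli of the roots, which is uniform for $C\in[C_{0}-1,C_{0}+1]$ by continuity of the coefficients) that every root of $p(\cdot,C)$ with $|C-C_{0}|\le1$ lies in $B_{R}(0)$. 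By joint continuity of $p$ and compactness of $K$, there is $\delta_{1}>0$ such that $p(\cdot,C)$ also has no zero on $K$ for $|C-C_{0}|<\delta_{1}$; combined with the a priori bound, all roots of $p(\cdot,C)$ then lie in $\bigcup_{j}B_{\epsilon}(z_{j})$. For each $j$ the integral $\frac{1}{2\pi i}\oint_{\partial B_{\epsilon}(z_{j})}\frac{\partial_{z}p(z,C)}{p(z,C)}\,dz$ counts, by the argument principle, the number of roots of $p(\cdot,C)$ in $B_{\epsilon}(z_{j})$; it is integer-valued and continuous in $C$ on $(C_{0}-\delta_{1},C_{0}+\delta_{1})$, hence constant and equal to its value $k_{j}$ at $C_{0}$. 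Letting $\epsilon\downarrow0$ gives: for every $\epsilon>0$ there is $\delta>0$ such that, for $|C-C_{0}|<\delta$, the root multiset of $p(\cdot,C)$ lies within Hausdorff distance $\epsilon$ of that of $p(\cdot,C_{0})$, with exactly $k_{j}$ roots (counted with multiplicity) in each $B_{\epsilon}(z_{j})$.

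Finally I would upgrade this to the existence of four continuous functions $r_{1},\dots,r_{4}\colon\mathbb{R}\to\mathbb{C}$ whose values, with multiplicity, are the roots of $p(\cdot,C)$. Because the parameter set $\mathbb{R}$ is an interval, the continuous map $C\mapsto\{\text{roots of }p(\cdot,C)\}$ into the space of four-element multisets in $\mathbb{C}$ admits a continuous lift to an ordered tuple: on a small interval around each $C_{0}$ one labels, inside each disc $B_{\epsilon}(z_{j})$, its $k_{j}$ roots by $k_{j}$ of the indices $1,\dots,4$ (any labeling is continuous at $C_{0}$ since all those roots tend to $z_{j}$), and one glues these local labelings along overlaps, where they differ only by a permutation; on a one-dimensional base such local choices can always be reconciled by a continuation argument. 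This is precisely the classical fact that a continuous family of monic polynomials parametrized by an interval has continuous root functions (see e.g. Kato, \emph{Perturbation Theory for Linear Operators}, Ch.~II, or Rahman--Schmeisser, \emph{Analytic Theory of Polynomials}). I expect the bookkeeping in this gluing step, in particular at parameters where two or more roots collide, to be the only genuinely delicate point; the argument-principle part above is routine. Once the continuous lift is in hand, the lemma follows immediately.
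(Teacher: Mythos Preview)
Your proposal is correct and follows essentially the same approach as the paper: both observe that the coefficients of \eqref{polynomial with C instead of cm} are continuous (indeed polynomial) in $C$ and then invoke the classical fact that roots of a monic polynomial depend continuously on its coefficients. The only difference is that the paper dispatches the matter in one sentence by citing [II~5.2, \cite{K1976}] (Kato), whereas you supply a self-contained argument-principle proof of that very result together with the lifting to continuous root functions on an interval; your extra work is sound but not needed given the citation.
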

	\begin{proof}
		The coefficients of \eqref{polynomial with C instead of cm} are continuous in $C$ on $\mathbb{R}$. This implies continuity of the roots by [II 5.2, \cite{K1976}].
	\end{proof}
	\begin{lem}\label{lemma nature of roots of quartic functions}
		Let $p(x)=a(4)x^{4}+a(3)x^{3}+a(2)x^{2}+a(1)x^{1}+a(0)$ be some polynomial with real coefficients $a(0),a(1),a(2),a(3),a(4)$ and $a(4)\neq 0$. Let $\Delta$ be the discriminant of $p$. Assume
		\begin{align}
			&8a(4)a(2)-3a(3)^{2}<0,
			\\&64a(4)^{3}a(0)-16a(4)^{2}a(2)^{2}+16a(4)a(3)^{2}a(2)-16a(4)^{2}a(3)a(1)-3a(3)^{4}<0.
		\end{align}
		Furthermore, assume that $\Delta=0$ implies $a(2)^{2}-3a(3)a(1)+12a(4)a(0)\neq 0$. Then $p$ has four simple real roots if $\Delta>0$, two simple real roots and two complex roots if $\Delta<0$, and two simple real roots and one real double root if $\Delta=0$.
	\end{lem}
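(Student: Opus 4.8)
The plan is to deduce the statement from the classical classification of the roots of a real quartic, which determines the number and multiplicities of the real roots from the sign of the discriminant $\Delta$ together with the two auxiliary quantities appearing in the hypotheses and the quantity $a(2)^{2}-3a(3)a(1)+12a(4)a(0)$. Write $P=8a(4)a(2)-3a(3)^{2}$ and $D=64a(4)^{3}a(0)-16a(4)^{2}a(2)^{2}+16a(4)a(3)^{2}a(2)-16a(4)^{2}a(3)a(1)-3a(3)^{4}$ for the two expressions that the hypotheses require to be negative, and $\Delta_{0}=a(2)^{2}-3a(3)a(1)+12a(4)a(0)$ for the quantity in the third hypothesis. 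The lemma then follows by reading off the three relevant rows of the standard sign table for $(\Delta,P,D,\Delta_{0})$ (one may also recover the table self-containedly by translating to the depressed quartic via $x\mapsto x-a(3)/(4a(4))$ and analysing its resolvent cubic, but citing the classification is cleaner); the point is that the hypotheses are tailored exactly to single out those rows.

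First I would treat the two non-degenerate cases. If $\Delta<0$, then without any further assumption the quartic has exactly two simple real roots and one pair of complex conjugate roots, which is the claimed conclusion. If $\Delta>0$, then the four roots are either all real and distinct or form two complex conjugate pairs, and these alternatives are separated precisely by the sign pattern of $(P,D)$: four distinct real roots occur exactly when $P<0$ and $D<0$, which is guaranteed by the first two hypotheses.

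The delicate case is $\Delta=0$, where the standard classification splits into several sub-cases, and the work is to check that the hypotheses eliminate all but the desired one. Going through the list: $D<0$ (in particular $D\neq0$) rules out the $D=0$ sub-cases (two real double roots, two complex double roots, all roots equal) together with the sub-case $D>0$; $P<0$ rules out the remaining sub-cases that require $P>0$ (a real double root together with a complex conjugate pair); and the third hypothesis, $\Delta=0\Rightarrow\Delta_{0}\neq0$, rules out the triple-root sub-case ($\Delta_{0}=0$, $D\neq0$) and the all-roots-equal sub-case. What survives is exactly the configuration $P<0$, $D<0$, $\Delta_{0}\neq0$, which yields a real double root together with two simple real roots. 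Collecting the three cases gives the statement. I expect the $\Delta=0$ branch to be the only real obstacle: it is the only place where the somewhat unusual third hypothesis is used, and it requires a careful, finite, but slightly tedious cross-check against every sub-case of the published table.
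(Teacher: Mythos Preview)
Your proposal is correct and follows essentially the same approach as the paper: the paper's entire proof is the single sentence ``The claim is a special case of \cite{doi:10.1080/00029890.1922.11986100},'' i.e.\ a direct citation of the classical classification of the roots of a real quartic. You reproduce this citation and additionally spell out which rows of the standard $(\Delta,P,D,\Delta_{0})$ sign table are selected by the hypotheses, which is more than the paper does but not a different method.
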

	\begin{proof}
		The claim is a special case of \cite{doi:10.1080/00029890.1922.11986100}.
	\end{proof}
	Now, we use these auxiliary results to prove continuity and simplicity of $x_{m}(C)$.
	\begin{lem}\label{lemma xm(C) continuous and simple}
		For all $(c_{1},\!...,c_{\ell})\in D^{c}$, $x_{m}(C)$ is continuous in $C$ on $\mathcal{I}(c_{m})$ and a simple root of \eqref{polynomial with C instead of cm}.
	\end{lem}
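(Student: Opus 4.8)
The plan is to establish the two assertions — simplicity and continuity — by playing off the two descriptions of $x_m(C)$ available for each $C\in\mathcal I(c_m)$: on the one hand it is a root of the explicit quartic \eqref{polynomial with C instead of cm} (Corollary~\ref{corollary xm(C) largest real root, only root in Dmx(C), not on boundary}), in fact the unique root lying in $D^m_x(c_{m-1},C)$ and an interior point of that interval; on the other hand, applying Proposition~\ref{proposition optimal xmc existence and uniqueness} to the tuple $(c_1,\dots,c_{m-1},C,c_{m+1},\dots,c_\ell)\in D^c$, it is the minimiser over $D^m_x(c_{m-1},C)$ of the strictly convex function $g(x)=x+b_m^2/\bigl(2(c_{m-1}+x-\tfrac{a_m^2}{2x}-C)\bigr)$ from \eqref{plugged in formula for ym in minimization problem} (with $c_m$ replaced by $C$); in particular $g'(x_m(C))=0$ and $g''>0$ on $D^m_x(c_{m-1},C)$ by \eqref{second derivative BEOx}.

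For simplicity I would first record the identity, obtained by clearing denominators in \eqref{BEOx alt},
\[
p(x)=g'(x)\,x^2D(x)^2\qquad(x>0),\qquad D(x):=c_{m-1}+x-\tfrac{a_m^2}{2x}-C,
\]
where $p$ denotes \eqref{polynomial with C instead of cm} and $2xD(x)=2x^2+2(c_{m-1}-C)x-a_m^2$. At $x=x_m(C)$ one has $x_m(C)>0$, $g'(x_m(C))=0$, and $D(x_m(C))=b_m^2/(2y_m(C))>0$ by \eqref{formula for ym in minimization problem} and \eqref{xmc domain4}; differentiating the identity and inserting $g'(x_m(C))=0$ gives $p'(x_m(C))=g''(x_m(C))\,x_m(C)^2D(x_m(C))^2>0$, so $x_m(C)$ is a simple root. (As an alternative, Lemma~\ref{lemma nature of roots of quartic functions} applies to \eqref{polynomial with C instead of cm} — its first two hypotheses reduce after short computations to the manifestly negative quantities $-4(c_{m-1}-C)^2-8a_m^2-4b_m^2$ and $-4b_m^4-16b_m^2(c_{m-1}-C)^2-32a_m^2b_m^2$, and the discriminant hypothesis is a direct check — showing $p$ has at most a double root, which by the formula for $p'(x_m(C))$ cannot be $x_m(C)$.)

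For continuity, fix $\widehat C\in\mathcal I(c_m)$. Since $D^m_x(c_{m-1},\widehat C)$ is a closed interval (Lemma~\ref{lemma Dmx(C) closed interval}) and $x_m(\widehat C)$ is not an endpoint (Corollary~\ref{corollary xm(C) largest real root, only root in Dmx(C), not on boundary}), choose $\epsilon>0$ with $\{|x-x_m(\widehat C)|<2\epsilon\}\subset D^m_x(c_{m-1},\widehat C)$; Lemma~\ref{lemma Dmx(C) continuous in C} then yields $\delta_1>0$ with $\{|x-x_m(\widehat C)|<\epsilon\}\subset D^m_x(c_{m-1},C)$ for $|C-\widehat C|<\delta_1$. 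By Lemma~\ref{lemma four roots continuous in C} take a continuous branch $C\mapsto r(C)$ of roots of \eqref{polynomial with C instead of cm} with $r(\widehat C)=x_m(\widehat C)$; since $x_m(\widehat C)$ is a simple real root of a polynomial with real, continuously varying coefficients, $r(C)$ stays real for $C$ near $\widehat C$, and by continuity there is $\delta_2\in(0,\delta_1]$ with $|r(C)-x_m(\widehat C)|<\epsilon$ for $|C-\widehat C|<\delta_2$; then $r(C)$ is a real root of \eqref{polynomial with C instead of cm} inside $D^m_x(c_{m-1},C)$, so $r(C)=x_m(C)$ by the uniqueness in Corollary~\ref{corollary xm(C) largest real root, only root in Dmx(C), not on boundary}. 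Hence $x_m(C)\to x_m(\widehat C)$ as $C\to\widehat C$.

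The hardest point — and the reason simplicity has to be settled first — is ensuring the root branch $r(C)$ does not leave the real axis near $\widehat C$: this uses exactly that $x_m(\widehat C)$ is simple, since otherwise the largest real root of the quartic need not vary continuously. The remainder is routine bookkeeping with the closedness and $C$-continuity of $D^m_x(c_{m-1},C)$ and with the factorisation $p=g'\,x^2D^2$.
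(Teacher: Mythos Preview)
Your proof is correct and takes a genuinely different, more direct route than the paper's.

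For \emph{simplicity}, the paper computes the discriminant $\Delta$ of the quartic \eqref{polynomial with C instead of cm} explicitly and then invokes Lemma~\ref{lemma nature of roots of quartic functions} to split into the cases $\Delta>0$, $\Delta<0$, $\Delta=0$; in each case it tracks the root structure and argues that the unique root in $D^m_x(c_{m-1},C)$ cannot be the double one. Your factorisation $p(x)=g'(x)\,x^2D(x)^2$ (which one verifies by clearing denominators in \eqref{BEOx alt}) short-circuits all of this: differentiating and using $g'(x_m(C))=0$, $g''(x_m(C))>0$ from \eqref{second derivative BEOx}, $x_m(C)>0$, and $D(x_m(C))=b_m^2/(2y_m(C))>0$ gives $p'(x_m(C))>0$ directly. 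This is cleaner and avoids both the discriminant computation and the case analysis; the paper's approach, in exchange, yields the full root classification of the quartic, which is not actually needed here.

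For \emph{continuity}, the paper again splits according to the sign of $\Delta$ and handles the boundary $\Delta=0$ separately, using that $x_m(C)$ is the largest real root to keep track of it across cases. Because you have already shown simplicity uniformly in $C$, you can bypass this: a simple real root of a real-coefficient polynomial persists as a real root under perturbation, and then Lemma~\ref{lemma Dmx(C) continuous in C} together with the uniqueness in Corollary~\ref{corollary xm(C) largest real root, only root in Dmx(C), not on boundary} pin it down as $x_m(C)$. One small presentational point: Lemma~\ref{lemma four roots continuous in C} gives continuity of the unordered root set rather than of individual branches, so your sentence ``take a continuous branch $r(C)$'' is really using simplicity (via the implicit function theorem, or the sign change $p(x_m(\widehat C)\pm\epsilon')\lessgtr 0$ that persists for nearby $C$) rather than Lemma~\ref{lemma four roots continuous in C} per se. This is not a gap, just a citation to tighten.
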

	\begin{proof}
		We compute 
		\begin{multline}
			\Delta=\frac{1}{4}\Big(64a_{m}^{8}b_{m}^{4}-48a_{m}^{6}b_{m}^{6}+96a_{m}^{6}b_{m}^{4}(c_{m-1}-C)^{2}-15a_{m}^{4}b_{m}^{8}-48a_{m}^{4}b_{m}^{6}(c_{m-1}-C)^{2}
			\\+48a_{m}^{4}b_{m}^{4}(c_{m-1}-C)^{4}-a_{m}^{2}b_{m}^{10}
			+6a_{m}^{2}b_{m}^{8}(c_{m-1}-C)^{2}-12a_{m}^{2}b_{m}^{6}(c_{m-1}-C)^{4}+8a_{m}^{2}b_{m}^{4}(c_{m-1}-C)^{6}\Big),
		\end{multline}
		the discriminant of \eqref{polynomial with C instead of cm}. Note that $\Delta$ is continuous in $C$ on $\mathbb{R}$ and there are at most six $C$ such that $\Delta=0$. Between these roots, $\Delta$ does not change its sign. 
		By elementary algebraic manipulations
		and Lemma \ref{lemma nature of roots of quartic functions}, we have three cases:
		\begin{center}
			\begin{tabular}{|l|l|}
				\hline
				$\Delta>0$ & \eqref{polynomial with C instead of cm} has four simple real roots \\
				\hline
				$\Delta<0$ & \eqref{polynomial with C instead of cm} has two simple real roots and two complex roots \\
				\hline
				$\Delta=0$ & \eqref{polynomial with C instead of cm} has one double real root and two simple real roots \\
				\hline
			\end{tabular}
		\end{center}
		Within each connected component of $\{C\in\mathcal{I}(c_{m}):\Delta>0\}$, the four roots of \eqref{polynomial with C instead of cm} can not change their order by Lemma \ref{lemma four roots continuous in C}. Since $x_{m}(C)$ is the largest one by Corollary \ref{corollary xm(C) largest real root, only root in Dmx(C), not on boundary}, $x_{m}(C)$ is continuous in $C$ on $\{C\in\mathcal{I}(c_{m}):\Delta>0\}$ and a simple root.
		
		Assume there exists a sequence $C(i)$ such that $C(i)$ is in the same connected component of $\{C\in\mathcal{I}(c_{m}):\Delta>0\}$ for all $i$ and, as $i\to\infty$, we have $C(i)\to\hat{C}\in \mathcal{I}(c_{m})$ and $\Delta\searrow 0$. Then the roots of \eqref{polynomial with C instead of cm} stay real and simple, do not change their order and two of them converge to the same real number. By Corollary \ref{corollary xm(C) largest real root, only root in Dmx(C), not on boundary}, the root that represents $x_{m}(C(i))$ is in the interior of $D^{m}_{x}(c_{m-1},C(i))$ and the only root in $D^{m}_{x}(c_{m-1},C(i))$.
		By Lemmas \ref{lemma Dmx(C) closed interval}, \ref{lemma Dmx(C) continuous in C} and \ref{lemma four roots continuous in C}, the limit of the root that represents $x_{m}(C(i))$ for all $i$ is in $D^{m}_{x}(c_{m-1},\hat{C})$. By Corollary \ref{corollary xm(C) largest real root, only root in Dmx(C), not on boundary}, there is only one root in $D^{m}_{x}(c_{m-1},\hat{C})$ and this root is not a boundary point. Hence, the double root can not be in $D^{m}_{x}(c_{m-1},\hat{C})$ and $\lim\limits_{i\to\infty}x_{m}(C(i))=x_{m}(\hat{C})$ is a simple root.
		
		Within each connected component of $\{C\in\mathcal{I}(c_{m}):\Delta<0\}$, the two real roots of \eqref{polynomial with C instead of cm} can not change their order or become non real by Lemma \ref{lemma four roots continuous in C}. Since $x_{m}(C)$ is the larger one by Corollary \ref{corollary xm(C) largest real root, only root in Dmx(C), not on boundary}, $x_{m}(C)$ is continuous in $C$ on $\{C\in\mathcal{I}(c_{m}):\Delta<0\}$ and a simple root.
		
		Assume there exists a sequence $C(i)$ such that $C(i)$ is in the same connected component of $\{C\in\mathcal{I}(c_{m}):\Delta<0\}$ for all $i$ and, as $i\to\infty$, we have $C(i)\to\hat{C}\in \mathcal{I}(c_{m})$ and $\Delta\nearrow 0$. Then the two complex roots become a real double root and the other roots stay simple and real. As in the case of $\Delta\searrow 0$, the limit of the root that represents $x_{m}(C(i))$ for all $i$ is in the interior of $D^{m}_{x}(c_{m-1},\hat{C})$ and the only root in $D^{m}_{x}(c_{m-1},\hat{C})$. Hence, the double root can not be in $D^{m}_{x}(c_{m-1},\hat{C})$ and $\lim\limits_{i\to\infty}x_{m}(C(i))=x_{m}(\hat{C})$ is a simple root.
	\end{proof}
	\begin{lem}\label{lemma xmc differentiable with respect to cm}
		For all $(c_{1},\!...,c_{\ell})$ in the interior of $D^{c}$, $x_{m}^{c}$ is differentiable with respect to $c_{m}$.
	\end{lem}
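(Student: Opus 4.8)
The plan is to obtain differentiability from the implicit function theorem, feeding in as input the continuity and simplicity of the root that were already established in Lemma \ref{lemma xm(C) continuous and simple}. First I would reduce the statement to a one-variable assertion: fix $(c_{1},\dots,c_{\ell})$ in the interior of $D^{c}$, write $C$ for the $m$-th coordinate, and note that the claim is exactly that $C\mapsto x_{m}(C)$ is differentiable at $C=c_{m}$. Since $(c_{1},\dots,c_{\ell})$ is interior to $D^{c}$, the point $c_{m}$ is interior to $\mathcal{I}(c_{m})$ (an interval by Corollary \ref{corollary Icm interval}), so $x_{m}(C)$ is defined on a whole neighbourhood of $c_{m}$. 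Recall from \eqref{polynomial with C instead of cm} and Proposition \ref{proposition optimal xmc existence and uniqueness} that $x_{m}(C)$ is a root of the quartic
\begin{equation*}
P_{C}(x)=x^{4}+2(c_{m-1}-C)x^{3}+\Big((c_{m-1}-C)^{2}-a_{m}^{2}-\tfrac{b_{m}^{2}}{2}\Big)x^{2}-a_{m}^{2}(c_{m-1}-C)x+\tfrac{a_{m}^{2}}{4}(a_{m}^{2}-b_{m}^{2}),
\end{equation*}
and that by Lemma \ref{lemma xm(C) continuous and simple} it is a \emph{simple} root of $P_{C}$ and depends continuously on $C$ on $\mathcal{I}(c_{m})$.

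Next I would apply the implicit function theorem to $F(x,C):=P_{C}(x)$, which is a polynomial in the pair $(x,C)$ and hence real-analytic on $\mathbb{R}^{2}$. At the point $(x_{m}(c_{m}),c_{m})$ one has $F=0$, and $\partial_{x}F(x_{m}(c_{m}),c_{m})=P_{c_{m}}'(x_{m}(c_{m}))\neq 0$ precisely because $x_{m}(c_{m})$ is a simple root of $P_{c_{m}}$. The theorem then furnishes an interval $(c_{m}-\eta,c_{m}+\eta)$, a neighbourhood $U$ of $x_{m}(c_{m})$, and a real-analytic (in particular $C^{1}$) map $\xi\colon(c_{m}-\eta,c_{m}+\eta)\to U$ with $\xi(c_{m})=x_{m}(c_{m})$ such that, for $C$ in this interval, $\xi(C)$ is the unique root of $P_{C}$ lying in $U$.

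Finally I would match $\xi$ with $x_{m}$. Both functions are continuous at $c_{m}$ (Lemma \ref{lemma xm(C) continuous and simple}), both equal $x_{m}(c_{m})$ there, and $x_{m}(C)$ is a root of $P_{C}$; so by continuity there is $\eta'\le\eta$ with $x_{m}(C)\in U$ for $|C-c_{m}|<\eta'$, and then the local uniqueness from the implicit function theorem forces $x_{m}(C)=\xi(C)$ on that interval. Therefore $x_{m}$ inherits the (in fact real-analytic) differentiability of $\xi$ at $c_{m}$, which is the assertion; differentiability with respect to $c_{m-1}$ follows verbatim after relabelling.

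I do not expect a genuine obstacle here: the algebra identifying \eqref{BEOx} with $P_{C}$ and the implicit-function-theorem bookkeeping are routine, and the only delicate point — that the branch produced by the theorem really is the function $x_{m}(\cdot)$ — is exactly what the continuity-plus-simplicity statement of Lemma \ref{lemma xm(C) continuous and simple} is designed to supply. In other words, the difficulty of this appendix has been front-loaded into that lemma, and once $x_{m}(C)$ is known to follow a simple root continuously, differentiability is automatic.
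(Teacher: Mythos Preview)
Your proposal is correct and follows essentially the same approach as the paper: both apply the implicit function theorem to the quartic \eqref{polynomial with C instead of cm}, use the simplicity of the root from Lemma \ref{lemma xm(C) continuous and simple} to obtain $\partial_{x}F\neq 0$, and then invoke continuity to identify the implicitly defined branch with $x_{m}(\cdot)$. The only cosmetic difference is that the paper first arranges the neighbourhood $V$ to lie inside $D^{m}_{x}(c_{m-1},C)$ (via Lemma \ref{lemma Dmx(C) continuous in C}) and then matches using the uniqueness statement of Corollary \ref{corollary xm(C) largest real root, only root in Dmx(C), not on boundary}, whereas you match directly via the local uniqueness built into the implicit function theorem; your route is slightly more economical but the substance is the same.
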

	\begin{proof}
		Assume $(c_{1},\!...,c_{\ell})$ is in the interior of $D^{c}$. To apply the implicit function theorem, we introduce some notation.
		We define $V=\{x\in\mathbb{R}:|x-x_{m}(c_{m})|<\epsilon\}$ and $U=\{C\in\mathcal{I}(c_{m}):|C-c_{m}|<\delta\}$.
		Let $\epsilon>0$ be so small that the distance of $V$ to the boundary of $D^{m}_{x}(c_{m-1},c_{m})$ is at least $\epsilon$. This is possible because $x_{m}(c_{m})$ is not a boundary point by Corollary \ref{corollary xm(C) largest real root, only root in Dmx(C), not on boundary}. Then we choose $\delta>0$ so small that two things hold. First, $U$ is in the interior of $\mathcal{I}(c_{m})$. This is possible because $(c_{1},\!...,c_{\ell})$ is in the interior of $D^{c}$. Secondly, for all $C\in U$, $V$ is a subset of the interior of $D^{m}_{x}(c_{m-1},C)$. This is possible by Lemma \ref{lemma Dmx(C) continuous in C}. Furthermore, we define the function $F:U\times V\to\mathbb{R}$ such that $F(C,x)$ is equal to \eqref{polynomial with C instead of cm}. 
		
		We have $F(c_{m},x_{m}(c_{m}))=0$. Since $x_{m}(c_{m})$ is a simple root by Lemma \ref{lemma xm(C) continuous and simple}, we also have partial $\partial_{x}F(c_{m},x_{m}(c_{m}))\neq 0$ by [Proposition 1.49, \cite{DKRH2016}]. By the implicit function theorem, there exist open neighbourhoods $U_{0}\subset U$ of $c_{m}$ and $V_{0}\subset V$ of $x_{m}(c_{m})$ and a unique continuously differentiable function $\hat{F} :U_{0}\to V_{0}$ such that $\hat{F}(c_{m})=x_{m}(c_{m})$ and $F(c,x)=0$ if and only if $x=\hat{F}(C)$. 
		
		By Corollary \ref{corollary xm(C) largest real root, only root in Dmx(C), not on boundary}, $x_{m}(C)$ is the only root in $D^{m}_{x}(c_{m-1},C)$. Our construction ensures $V_{0}\subset V\subset D^{m}_{x}(c_{m-1},C)$ for all $C\in U$. By Lemma \ref{lemma xm(C) continuous and simple}, there exists an open neighbourhood $U_{1}\subset U_{0}$ of $c_{m}$ such that $x_{m}(C)\in V_{0}$ for all $C\in U_{1}$. This implies $\hat{F}(C)=x_{m}(C)$ for all $C\in U_{1}$. Hence, $x_{m}^{c}=x_{m}(c_{m})$ is differentiable with respect to $c_{m}$.
	\end{proof}
\bibliographystyle{abbrv}

\bibliography{obstacle.bib}
\end{document}